\newtheorem{theorem}{Theorem}[section]
\newtheorem{lemma}[theorem]{Lemma}
\newtheorem{prop}[theorem]{Proposition}
\newtheorem{cor}[theorem]{Corollary}
\theoremstyle{definition}
\newtheorem{definition}[theorem]{Definition}
\newtheorem{constr}[theorem]{Construction}
\theoremstyle{remark}
\newtheorem{remark}[theorem]{Remark}
\numberwithin{equation}{section}
\newcommand{\C}{\mathbb{C}}
\newcommand{\N}{\mathbb{N}}
\newcommand{\R}{\mathbb{R}}
\DeclareMathOperator{\image}{im}
\DeclareMathOperator{\diag}{diag}
\DeclareMathOperator{\codim}{codim}
\DeclareMathOperator{\id}{Id}
\DeclareMathOperator{\ord}{ord}
\DeclareMathOperator{\sign}{sign}
\DeclareMathOperator{\scal}{scal}
\DeclareMathOperator{\Riem}{Riem}
\title[$S^1$-equivariant bordism]{$S^1$-equivariant bordism,\\ invariant metrics of positive scalar curvature,\\ and rigidity of elliptic genera}
\author{Michael Wiemeler}
\address{Mathematisches Institut\\WWU M\"unster\\Einsteinstrasse 62\\D-48149 M\"unster\\Germany}
\email{wiemelerm@uni-muenster.de}
\thanks{The research for this paper was supported by DFG grant HA 3160/6-1.}
\subjclass[2010]{53C20, 57S15, 57R85, 58J26}
\keywords{$S^1$-manifolds, metrics of positive scalar curvature, rigidity of elliptic genera, equivariant bordism, $\hat{A}$-genus}
\begin{document}
\begin{abstract}
  We construct geometric generators of the effective $S^1$-equivariant Spin- (and oriented) bordism groups with two inverted.
 We apply this construction to the question of which \(S^1\)-manifolds
 admit invariant metrics of positive scalar curvature.

It turns out that, up to taking connected sums with several copies of the same manifold, the only obstruction to the existence of such a metric is an \(\hat{A}\)-genus of orbit spaces.
This \(\hat{A}\)-genus generalizes a previous definition of Lott for orbit spaces of semi-free \(S^1\)-actions.

As a further application of our results, we give a new proof of the
vanishing of the \(\hat{A}\)-genus of a Spin manifold with
non-trivial $S^1$-action originally proven by Atiyah and Hirzebruch.
 Moreover, based on our computations we can give a bordism-theoretic proof for the rigidity of elliptic genera originally proven by Taubes and Bott--Taubes.
\end{abstract}

\maketitle


\section{Introduction}
\label{sec:intro}

The problem of determining generators of \(S^1\)-equivariant bordism
rings dates back to the 1970s.
The first results were obtained by Uchida \cite{MR0278338}, Ossa
\cite{MR0263093}, Kosniowski and Yahia \cite{MR695647} and Hattori and
Taniguchi \cite{MR0309134}.
Most of these papers deal with oriented or unitary bordism. Moreover
they construct additive generators.

Using these generators bordism-theoretic proofs of the Kosniowski formula and the Atiyah--Singer formula have been given \cite{MR0309134}, \cite{MR0286131}.
The Kosniowski formula expresses the \(T_y\)-genus of a unitary \(S^1\)-manifold in terms of fixed point data \cite{MR0261636}.
The Atiyah--Singer formula expresses the signature of an oriented \(S^1\)-manifold in terms of the signatures of the fixed point components \cite{MR0236952}.
They were originally proved using the Atiyah--Singer \(G\)-signature theorem. 

More recently the problem of finding multiplicative generators for
unitary bordism was studied by Sinha \cite{MR2115453}.

Semi-free \(S^1\)-equivariant Spin-bordism has previously been considered by Borsari \cite{MR882700}.
Her motivation was a question of Witten, who asked if the equivariant indices of certain twisted Dirac operators are constants and suggested to approach this question via equivariant bordism theory \cite[pp. 258-259]{MR830167}.
The indices of these twisted Dirac operators are coefficients in the Laurent expansion of the universal elliptic genus in the \(\hat{A}\)-cusp.
Therefore a positive answer to Witten's question is implied by the rigidity of elliptic genera which was proven by Taubes \cite{MR998662} and Bott--Taubes \cite{MR954493}.
However, contrary to what Witten suggested their proof was not based on equivariant bordism theory but instead used equivariant K-theory, the Lefschetz fixed point formula and some complex analysis.
Later an alternative proof was given by Liu \cite{MR1331972}  using modularity properties of the universal elliptic genus.

It seems that after their proofs appeared nobody carried out the bordism theoretic approach to the rigidity problem.
We pick up this problem and prove the rigidity of elliptic genera via equivariant bordism theory, thus realizing Witten's original plan.
Moreover, we give a bordism-theoretic proof for the vanishing of the \(\hat{A}\)-genus of a Spin-manifold which admits a non-trivial smooth \(S^1\)-action originally proved by Atiyah and Hirzebruch \cite{MR0278334}.

This classical result follows from general existence results for \(S^1\)-invariant metrics of positive scalar curvature (see Theorem \ref{sec:introduction-1} and Theorem \ref{sec:introduction-2}).
Up to a power of \(2\) these results are conclusive, thus finishing a line of thought begun in \cite{berard83:_scalar} and continued in \cite{MR982331}, \cite{MR2376283}, \cite{wiemeler15:_circl}.
We remark that the Atiyah-Hirzebruch vanishing theorem mentioned above and the existence of \(S^1\)-invariant metrics of positive scalar curvature have not been considered as related subjects, until now.

These proofs are based on a construction of additive generators of the \(S^1\)-equivariant Spin- and oriented bordism groups.
These generators are described in the following theorem.
To state it we first have to fix some notations.

  Let \(G=SO\) or \(G=\text{Spin}\). Denote by \(\Omega_n^{G,S^1}\)
  the bordism group of \(n\)-dimensional manifolds with effective
  \(S^1\)-actions and \(G\)-structures on their tangent bundles.
  We do not assume that our \(S^1\)-actions preserve the Spin structures. But by the connectedness of \(S^1\) they always preserve the orientations.

  Moreover, denote by \(\Omega_{\geq 4, n}^{G,S^1}\) similar groups of
  those \(S^1\)-manifolds which satisfy the above conditions and do
  not have fixed point components of codimension two. We also assume in this case that the bordisms between the manifolds do not have codimension-two fixed point components.

We also need the notion of a generalized Bott manifold.
A generalized Bott manifold \(M\) is a manifold of the following
type:
There exists a sequence of fiber bundles
\begin{equation*}
  M=N_l\rightarrow N_{l-1} \rightarrow \dots \rightarrow N_1\rightarrow N_0=\{pt\},
\end{equation*}
such that \(N_0\) is a point and each \(N_i\) is the projectivization
of a Whitney sum of \(n_i+1\) complex line bundles over \(N_{i-1}\).
Then \(M\) has dimension \(2n=2\sum_{i=1}^l n_i\) and admits an effective action of
an \(n\)-dimensional torus \(T\) which has a fixed point.
Hence it is a so-called torus manifold (for details on the
construction of this torus
action see Section~\ref{sec:unitary}).

\begin{theorem}
\label{sec:introduction-3}
    \(\Omega_*^{G,S^1}[\frac{1}{2}]\) and \(\Omega_{\geq 4, *}^{G,S^1}[\frac{1}{2}]\) are generated as modules over \(\Omega^G_*[\frac{1}{2}]\) by manifolds of the following two types:
    \begin{enumerate}
    \item Semi-free \(S^1\)-manifolds, i.e. \(S^1\)-manifolds \(M\), such that all orbits in \(M\) are free orbits or fixed points.
    \item Generalized Bott manifolds \(M\), equipped with a restricted \(S^1\)-action.
    \end{enumerate}
\end{theorem}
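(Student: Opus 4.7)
My plan is to induct on a suitable complexity measure of the $S^1$-action, reducing the equivariant bordism class $[M]$ modulo the submodule generated by semi-free manifolds and by generalized Bott manifolds with restricted $S^1$-actions. The semi-free case forms the base and is of the allowed type by hypothesis; for the inductive step I want to peel off successive layers of singular strata, replacing each by a contribution that assembles into a generalized Bott manifold.

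The main tool should be an equivariant cut-and-paste at a maximum-isotropy stratum. For a component $F\subset M^{\mathbb{Z}_k}$ with maximal finite isotropy $\mathbb{Z}_k$, the equivariant normal bundle $\nu$ splits as $L_1\oplus\cdots\oplus L_r$, a sum of weighted complex line bundles, and the tubular neighborhood $D(\nu)$ of $F$ embeds into the projective compactification $P=\mathbb{P}(\nu\oplus\underline{\C})\to F$. Swapping $D(\nu)\subset M$ for the complementary disk bundle $D(\nu')\to\mathbb{P}(\nu)$ inside $P$ produces a new $S^1$-manifold $M'$ with $[M]=[M']+[P]$ in $\Omega_*^{G,S^1}[\tfrac{1}{2}]$ via the standard cut-and-paste bordism. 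By choosing the weight on $\underline{\C}$ appropriately (and, for the Spin case, an auxiliary line-bundle square root whose existence is guaranteed after inverting $2$), $P$ should be bordant, modulo the submodule under construction, to a generalized Bott manifold $P_0$ carrying the restricted $S^1$-action from its tangent torus; since $\dim F<\dim M$, an outer induction on dimension handles the base $F$ and confirms that $[P]$ lies in the desired submodule.

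The hard part will be arranging that $M'$ is genuinely simpler than $M$. A naive count of fixed-set dimensions can fail: the new $\mathbb{Z}_k$-fixed components of $M'$ sitting over $\mathbb{P}(\nu)$ can be more numerous than in $M$, even when each individual component is in some sense less entangled with the action. The right complexity measure must therefore track finer data, for instance the total rank of equivariant normal bundles of a given isotropy type, or the length of the isotropy filtration, chosen so that the cut-and-paste strictly decreases it. A secondary obstacle is maintaining the $G$-structure throughout: for $G=SO$ the argument proceeds once orientation conventions in the gluing are fixed, but for $G=\mathrm{Spin}$ the projective bundles involved are typically non-spin, so one must twist them using line-bundle square roots available only after inverting $2$. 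The codimension-two hypothesis of $\Omega_{\geq 4,*}^{G,S^1}$ then excludes precisely the rank-one projectivizations, which have to be handled separately to keep all intermediate bordisms inside $\Omega_{\geq 4,*}^{G,S^1}$.
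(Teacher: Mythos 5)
Your outline shares the paper's basic philosophy — filter by slice type complexity and produce projective bundles as the new generators — but the mechanism you propose (cut-and-paste at a maximal-isotropy stratum) is not the one the paper uses, and in the form you state it the argument has real gaps.

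The paper proceeds via the Kosniowski--Yahia framework: it orders the effective $S^1$-slice types, builds an increasing sequence of families $\mathcal{F}_0 \subset \mathcal{F}_1 \subset \cdots$, and inducts along the long exact sequences
\[
\cdots \to \Omega^{C,S^1}_n[\mathcal{F}_{i-1}] \to \Omega^{C,S^1}_n[\mathcal{F}_i] \xrightarrow{\nu_i} \Omega^{C,S^1}_n[\sigma_i] \xrightarrow{\partial_i} \Omega^{C,S^1}_{n-1}[\mathcal{F}_{i-1}] \to \cdots,
\]
constructing explicit sections $q_i$ of the normal-data maps $\nu_i$. The generators of the bundle bordism groups $\Omega_n^{C,S^1}[\sigma_i]$ are identified (via Saihi's computation and the Atiyah--Hirzebruch spectral sequence, after inverting $2$) with bundles whose bases can be taken to be products of complex projective spaces, and it is precisely this fact that forces the $q_i(E)$ to be generalized Bott manifolds. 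Your cut-and-paste $[M]=[M']+[P]$ with $P=\mathbb{P}(\nu\oplus\underline{\C})$ is a related move, but it sidesteps the careful ordering and the explicit normal-bundle bookkeeping, and that is where things go wrong.

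Concretely, there are at least three gaps. First, the equivariant normal bundle $\nu$ of a component $F\subset M^{\mathbb{Z}_k}$ does \emph{not} globally split as a sum of line bundles; it only splits into $\mathbb{Z}_k$-isotypic summands $\nu=\bigoplus_j\nu_j$, each of which is a complex bundle of arbitrary rank. This matters because the projectivization of a sum of line bundles is a generalized Bott manifold, while the projectivization of a higher-rank bundle is not. Second, and more seriously, $P=\mathbb{P}(\nu\oplus\underline{\C})$ is a projective bundle over $F$, where $F$ is an arbitrary lower-dimensional $S^1$-manifold. Your ``outer induction on dimension'' gives information about the bordism class of $F$ alone, but to conclude anything about $[P]$ you need a bordism of the \emph{pair} $(F,\nu)$ — i.e.\ you need a bundle-level inductive hypothesis, not the statement of the theorem one dimension down. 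The paper supplies this by working with $\Omega_n^{C,S^1}[\sigma_i]$ as the inductive object. Third, your complexity measure is left open, and you correctly note that a naive one will not strictly decrease: the resolved manifold $M'$ typically acquires new singular strata over $\mathbb{P}(\nu)$. The paper's ordering by $\delta+d$, then $d$, then lexicographically is designed exactly so that the sections $q_i$ land back in $\mathcal{F}_i$; without a comparably precise ordering the induction does not terminate.

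For the Spin case the gap is wider still. Inverting $2$ does not by itself resolve the problem: the paper needs the explicit twisted projective bundle construction $\widetilde{\C TP}(E_1;E_2)$, with five separate $T^2$-action formulas chosen according to the parities of $\dim E_1$, $\dim E_2$ and the extremal weight, together with a case-by-case check (seven cases in the proof of Theorem~\ref{sec:spin-case}) that the resulting manifolds are spin and map under $\nu_i$ to the required bundle-bordism generators. A remark that ``line-bundle square roots are available after inverting $2$'' does not reproduce this; the two Spin structures on $S^1\times_H V$ have to be tracked against the two components of $B\Gamma_u\amalg B\Gamma_v$ (Lemma~\ref{sec:spin-case-4}), and the sections constructed must respect this decomposition. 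Finally, passing from Condition C to general $S^1$-manifolds (Lemma~\ref{sec:s1-manifolds-not}) is a separate surjectivity argument that your outline does not address at all.
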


The proof of this result is based on techniques first used by
Kosniowski and Yahia \cite{MR695647} in combination with a result of Saihi
\cite{MR1846617}.

Using the above result, we can give a bordism-theoretic proof of the rigidity of elliptic genera.
This gives the following theorem:

\begin{theorem}[{\cite{MR954493}}]
\label{sec:introduction-4}
  Let \(\varphi:\Omega_n^{\text{Spin},S^1}\rightarrow H^{**}(BS^1,\C)=\C[[z]]\) be an equivariant elliptic genus.
  Then \(\varphi(M)\) is constant, as a power series in \(z\), for every effective \(S^1\)-manifold \([M]\in \Omega_n^{\text{Spin},S^1}\).
\end{theorem}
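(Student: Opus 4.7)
The plan is to combine the structure theorem (Theorem~\ref{sec:introduction-3}) with a direct verification of rigidity on each of its two classes of generators. First, I would observe that $\varphi$ is not merely additive: on an $S^1$-manifold $N$ with trivial action, it reduces to the non-equivariant elliptic genus $\varphi(N)\in\C\subset\C[[z]]$, and multiplicativity of the genus then promotes $\varphi$ to a homomorphism of $\Omega_*^{\text{Spin}}[\frac{1}{2}]$-modules, where $\Omega_*^{\text{Spin}}[\frac{1}{2}]$ acts on $\C[[z]]$ through these constants. In particular, for any $[B]\in\Omega_*^{\text{Spin}}[\frac{1}{2}]$ and any equivariant class $[M]$, constancy of $\varphi([B]\cdot[M])=\varphi(B)\cdot\varphi([M])$ in $z$ is reduced to constancy of $\varphi([M])$. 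By Theorem~\ref{sec:introduction-3} it therefore suffices to check that $\varphi(M)$ is constant whenever $M$ is either (i)~a semi-free Spin $S^1$-manifold, or (ii)~a generalized Bott manifold with a restricted $S^1$-action.

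For a generalized Bott manifold, $\varphi(M)$ should be accessible by a direct computation internal to the tower $M=N_l\to\dots\to N_0$. Working with equivariant Chern roots $x_{i,j}$ of the line-bundle summands at each stage, and integer weights $a_{i,j}$ for the restricted $S^1$-action, the value $\varphi(M)$ becomes a characteristic integral of an iterated product of Jacobi-type factors in $x_{i,j}+a_{i,j}z$. Pushing forward along each projectivization in turn reduces this to a closed-form expression which, after exploiting the standard functional equations of the elliptic characteristic series, should be independent of $z$. No analytic input beyond the formal properties of the elliptic genus is needed here; it is essentially a combinatorial calculation internal to the Bott tower.

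The main obstacle will be the semi-free generators. Here the fixed set $F=M^{S^1}$ has complex normal bundle $\nu$ all of whose $S^1$-weights are $\pm 1$. The strategy is to reduce, via the analysis of semi-free equivariant Spin bordism in the spirit of Borsari~\cite{MR882700}, to standard disk-bundle models $D(\nu)\cup_{S(\nu)}D(\nu')$ for which $\varphi$ is given by an explicit characteristic integral in Pontryagin classes of $F$ and Chern classes of $\nu$. Rigidity for these models should then follow from an elementary identity: because all weights are $\pm 1$, the Jacobi factors entering the genus degenerate enough for the apparent $z$-dependence to cancel, leaving only the non-equivariant $\hat{A}$-characteristic integral. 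The delicate point is to carry the Spin structure through this reduction and to match it with the characteristic-class calculation; once this is in place, the two generator classes combine with the reduction step to yield the claim.
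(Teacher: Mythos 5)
Your overall reduction is exactly the paper's: use Theorem~\ref{sec:introduction-3} (together with the observation, made explicit in the paper's lemma at the start of Section~\ref{sec:rigidity}, that $\varphi_{S^1}$ is an $S^1$-equivariant bordism invariant) to reduce to the two generator classes, and for the semi-free generators cite Ochanine's rigidity result \cite{MR970284}. So far, so good.

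The gap is in your treatment of the generalized Bott manifolds. You assert that pushing forward along the tower of projectivizations gives a closed-form characteristic integral whose $z$-independence follows from ``the standard functional equations of the elliptic characteristic series,'' with ``no analytic input'' needed. Two problems. First, this is not a formal cancellation: rigidity of elliptic genera for even a single $\C P^1$-bundle is already a genuine theorem (it is the Witten/Ochanine/Landweber--Stong multiplicativity statement), and the periodicity and parity of the elliptic function series by themselves do not force the $z$-dependence out of the push-forward; every known proof inserts either meromorphicity/pole analysis or modularity at precisely this point. Second, and more structurally, you work only with the restricted $S^1$-action and its weights $a_{i,j}$. The paper's proof (the lemma at the end of Section~\ref{sec:rigidity}, adapting Proposition~7 of \cite{MR970284}) essentially requires passing to the full torus $T^n$ acting on the Bott manifold: $\varphi_{T^n}(M)$ is treated as a meromorphic function on $\C^n$, its putative polar set is confined to hyperplanes $\ker\lambda_{i,F,\C}+z$, and then --- using the spin hypothesis, the functional equation $x(u+w)=(-1)^{r(w)}x(u)$, and restriction to the codimension-one subtorus $\ker\lambda_{i,F}$ --- one shows the singular set is empty, forcing $\varphi_{T^n}$ to be constant. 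This inductive pole-elimination argument is unavailable if you never leave $H^{**}(BS^1)$: the localization formula in the $S^1$-equivariant picture has poles at every rational $z$ and their cancellation is exactly what must be proved. So you need to (a) upgrade from the restricted $S^1$-action to the torus action on the Bott manifold and (b) supply the analytic input --- the meromorphy of the elliptic function $x$ and the pole analysis --- rather than expect a formal identity to carry the day.

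Your sketch for the semi-free case is closer in spirit to what is actually true --- Ochanine's proof does use localization and then observes directly that the fixed-point formula has no poles when all weights are $\pm1$ --- but note that even there a small amount of complex analysis enters; describing it as ``an elementary identity'' undersells it somewhat, though since the paper defers to \cite{MR970284} for this case, this is not a defect in the logical structure of your proof plan, only in its characterization.
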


The idea of our proof of this theorem is as follows.
By Theorem \ref{sec:introduction-3} one only has to prove the rigidity of elliptic genera for semi-free \(S^1\)-manifolds and generalized Bott manifolds.
For semi-free \(S^1\)-manifolds this was done by Ochanine \cite{MR970284}.
His proof can be modified in such a way that it gives the rigidity of \(T\)-equivariant elliptic genera of effective \(T\)-manifolds \(M\) such that all fixed point components have minimal codimension \(2\dim T\).
Here \(T\) denotes a torus.
This minimality condition is satisfied for torus manifolds and therefore also for generalized Bott manifolds.
So the theorem follows.

We also apply Theorem~\ref{sec:introduction-3} to the question of which \(S^1\)-manifolds admit
invariant metrics of positive scalar curvature.
It is necessary for this application to consider the bordism groups
\(\Omega_{\geq 4, n}^{G,S^1}\) because the bordism principle which we
will prove to attack this question only works for bordisms which do
not have fixed point components of codimension two (see Theorem~\ref{sec:more-results} and Remark \ref{sec:resolv-sing-9}).

The existence question in the non-equivariant setting was
 finally answered by Gromov and Lawson \cite{MR577131}  for high dimensional simply connected manifolds which do not admit
 Spin structures and by Stolz \cite{MR1189863} for high dimensional simply connected manifolds which admit such a
 structure.

 Their results are summarized by the following theorem.

 \begin{theorem}
\label{sec:introduction}
  Let \(M\) be a simply connected closed manifold of dimension at
  least five. Then the following holds:
  \begin{enumerate}
  \item If \(M\) does not admit a Spin-structure, then \(M\) admits a
    metric of positive scalar curvature.
  \item If \(M\) admits a Spin-structure, then \(M\) admits a metric
    of positive scalar curvature if and only if \(\alpha(M)=0\).
  \end{enumerate}
\end{theorem}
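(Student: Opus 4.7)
The plan is to separate the easy necessity direction from the much harder existence result and to attack the latter via bordism theory. For the necessity of $\alpha(M)=0$ in part (2), I would invoke the Lichnerowicz formula $D^2=\nabla^*\nabla+\tfrac{1}{4}\scal$ for the spinor Dirac operator: a psc metric forces $\ker D=0$, whence the analytic index of $D$ vanishes and $\hat{A}(M)=0$; Hitchin's Clifford-linear refinement then upgrades this to the vanishing of the full $KO$-valued $\alpha$-invariant.

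The heart of sufficiency is the Gromov--Lawson surgery theorem, according to which psc metrics extend across surgeries of codimension at least three. Since $M$ is simply connected of dimension at least five, any $SO$- or $\text{Spin}$-bordism between $M$ and a chosen model manifold can be reorganised by Smale-type handle trading so that only handles of index $\geq 3$ occur; each such handle corresponds to a codimension-$\geq 3$ surgery, and psc therefore propagates along the bordism. This reduces the existence problem to exhibiting a single psc representative in each class of $\Omega^{SO}_n$, respectively of $\ker\bigl(\alpha\colon\Omega^{\text{Spin}}_n\to KO_n\bigr)$.

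To produce these representatives, in the non-spin case one follows Gromov--Lawson and uses that $\Omega^{SO}_*$ admits generators among projective bundles and K\"ahler manifolds such as $\C P^{2k}$ and $\mathbb{H}P^{2k}$, all of which carry Fubini--Study-type psc metrics, so that every simply connected non-spin $M$ can be matched up after connected sums and surgery. In the spin case one appeals to Stolz's deep theorem that the psc-bordism subgroup of $\Omega^{\text{Spin}}_*$ coincides with $\ker\alpha$, realised by total spaces of $\mathbb{H}P^2$-bundles with structure group $PSp(3)$; each such bundle carries a fibrewise psc metric by the B\'erard-Bergery construction applied to the canonical metric on $\mathbb{H}P^2$. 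The main obstacle is plainly this Stolz surjectivity result, whose proof requires a detailed computation of the $ko$-homology of $BPSp(3)$ combined with a transfer argument through the Atiyah--Bott--Shapiro orientation $\mathrm{MSpin}\to ko$; the remaining ingredients -- Lichnerowicz, the surgery theorem, and the handle-trading bordism principle -- are by now comparatively standard.
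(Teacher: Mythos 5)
Your proposal reproduces exactly the strategy the paper itself sketches after stating this theorem: Lichnerowicz/Hitchin for the necessity of $\alpha(M)=0$, the Gromov--Lawson/Schoen--Yau surgery principle, the bordism principle for simply connected manifolds of dimension at least five, and then finding psc representatives in $\Omega^{SO}_*$ (Gromov--Lawson) and in $\ker\alpha\subset\Omega^{\mathrm{Spin}}_*$ via Stolz's $\mathbb{H}P^2$-bundle theorem. The paper does not prove this classical result; it records precisely this outline with citations to Gromov--Lawson and Stolz, so your approach is the same one.
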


In the above theorem \(\alpha(M)\) denotes the \(\alpha\)-invariant of
\(M\).
It is a KO-theoretic refinement of the \(\hat{A}\)-genus of \(M\) and
an invariant of the Spin-bordism type of \(M\).

The proof of this theorem consists of two steps; one is geometric,
the other is topological.
The geometric step is to show that a manifold \(M\) which is
constructed from another manifold \(N\) by surgery in codimension at
least 3 admits a metric of positive scalar curvature if \(N\) admits
such a metric.
This is the so-called surgery principle.
It has been shown independently by Gromov--Lawson \cite{MR577131} and Schoen--Yau \cite{MR535700}.

From this principle it follows that a manifold of dimension at least
five admits a metric of positive scalar curvature if and only if its
class in a certain bordism group can be represented by a manifold with
such a metric.
This is called the bordism principle.

The final step in the proof of the above theorem is then to find all
bordism classes which can be represented by manifolds which admit a
metric of positive scalar curvature.

The answer to the existence question in the equivariant setting is less clear.

First of all,
the question if there exists an invariant metric of positive scalar curvature on a \(G\)-manifold, \(G\) a compact Lie group acting effectively, has been answered positively
by Lawson and Yau \cite{MR0358841} for the case that the identity component of \(G\) is
non-abelian.
The proof of this result does not use bordism theory or surgery.
It is based on the fact that a homogeneous \(G\)-space admits an invariant metric of positive scalar curvature, which is induced from an bi-invariant metric on \(G\).

If the identity component of \(G\) is abelian, then the answer to
this question is more complicated.
In this case the existence question was first studied by B\'erard Bergery \cite{berard83:_scalar}.

He gave examples of simply
connected manifolds with a non-trivial \(S^1\)-action which admit
metrics of positive scalar curvature, but no \(S^1\)-invariant such
metric.
There are also examples of manifolds which admit \(S^1\)-actions,
but no metric of positive scalar curvature.
Such examples are given by certain homotopy spheres not bounding Spin manifolds \cite{MR0221518}, \cite{MR0380853}, \cite{MR632190}.

B\'erard Bergery also showed that the proofs of the surgery principle carry over to the equivariant setting for actions of any compact Lie group \(G\).
Based on this several authors have tried to adopt the proof of
Theorem~\ref{sec:introduction} to the equivariant situation.

At first a bordism principle was proposed by Rosenberg and Weinberger \cite{MR982331} for
finite cyclic groups \(G\) and actions without fixed point components
of codimension two.
The proof of this theorem is potentially problematic, because its proof needs more
assumptions than those which are stated in the theorem (see the discussion following Corollary 16 in \cite{MR2376283}).
Based on Rosenberg's and Weinberger's theorem Farsi \cite{MR1152317} studied Spin-manifolds of dimension less
than eight with actions of cyclic groups of odd order.

Later Hanke \cite{MR2376283} proved a bordism principle for actions of any compact
Lie group \(G\) which also takes codimension-two singular strata into
account.
He used this result to prove the existence of invariant metrics of
positive scalar curvature on certain non-Spin \(S^1\)-manifolds which
do not have fixed points and satisfy Condition C (see Definition~\ref{sec:resolv-sing-8}).

In \cite{wiemeler15:_circl} the author showed that every
\(S^1\)-manifold with a fixed point component of codimension two
admits an invariant metric of positive scalar curvature.
In that paper existence results for invariant metrics of positive scalar
curvature on semi-free \(S^1\)-manifolds without fixed point
components of codimension two were also discussed.

Here we extend the results from that paper to certain non-semi-free \(S^1\)-manifolds.
We prove the following existence results for metrics of positive scalar curvature on non-semi-free \(S^1\)-manifolds.

\begin{theorem}
\label{cha:discussion}
  Let \(M\) be a connected effective \(S^1\)-manifold of dimension at least six.
Denote by \(M_{\text{max}}\) the maximal stratum of \(M\), i.e. the union of the principal orbits.

If \(\pi_1(M_\text{max})=0\) and \(M_{\text{max}}\) is not Spin, 
  then for some \(k\geq 0\), the equivariant connected sum of \(2^k\) copies of \(M\) admits an invariant metric of positive scalar curvature.
\end{theorem}

Here an equivariant connected sum of two \(S^1\)-manifolds \(M_{1}\), \(M_{2}\) can be a fiber connected sum at principal orbits, a connected sum at fixed points or more generally the result of a zero-dimensional equivariant surgery on orbits \(O_i\subset M_i\), \(i=1,2\).

Now we turn to a similar result in the case that \(M\) is a Spin manifold.
For this we first note that on Spin manifolds there are two types of actions,
those which lift to actions on the Spin-structure and those which do not lift to the Spin-structure.
The actions of the first type are called actions of even type,
whereas the actions of the second kind are called actions of odd type.

\begin{theorem}
\label{sec:introduction-1}
  Let \(M\) be a Spin \(S^1\)-manifold with \(\dim M\geq 6\), an effective \(S^1\)-action of odd type and \(\pi_1(M_{\text{max}})=0\). Then there is a \(k\in \N\) such that the equivariant connected sum of \(2^k\) copies of \(M\) admits an invariant metric of positive scalar curvature.
\end{theorem}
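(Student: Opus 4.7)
The plan is to combine Theorem~\ref{sec:introduction-3} with a bordism principle for $S^1$-invariant metrics of positive scalar curvature, and to verify that each additive generator carries such a metric. Throughout I abbreviate ``positive scalar curvature'' by PSC.

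\textbf{Step 1 (reduction to $\Omega_{\geq 4}$).} If $M$ has a fixed point component of codimension two, \cite{wiemeler15:_circl} already supplies an invariant PSC metric on $M$ itself, and equivariant connected sums inherit such metrics by the equivariant surgery principle. Hence I may assume $[M] \in \Omega_{\geq 4, \dim M}^{\text{Spin}, S^1}$.

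\textbf{Step 2 (additive decomposition).} Apply Theorem~\ref{sec:introduction-3} to the odd-type summand of $\Omega_{\geq 4, *}^{\text{Spin}, S^1}[\tfrac{1}{2}]$. Clearing the powers of $2$ gives an identity
\[
2^{k}[M] \;=\; \sum_{i} [P_{i}]\cdot [N_{i}] \qquad \text{in } \Omega_{\geq 4,\,\dim M}^{\text{Spin}, S^{1}},
\]
where each $P_{i}$ is a spin manifold with trivial $S^{1}$-action and each $N_{i}$ is either a semi-free $S^{1}$-manifold of odd type or a generalized Bott manifold equipped with its restricted $S^{1}$-action.

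\textbf{Step 3 (generators carry invariant PSC metrics).} For the semi-free $N_{i}$ of odd type, the required invariant PSC metrics are constructed in \cite{wiemeler15:_circl}. For a generalized Bott manifold, the iterated projectivization carries a $T$-invariant Kähler metric built inductively from fibrewise Fubini--Study metrics; after rescaling the fibre direction the scalar curvature is positive, and the metric is automatically invariant under any subcircle $S^{1}\subset T$. Since the $S^{1}$-action on $P_{i}$ is trivial, the Riemannian product of any metric on $P_{i}$ with an invariant PSC metric on $N_{i}$ is $S^{1}$-invariant and has positive scalar curvature (with appropriate fibre scaling in the product when needed). Signs in the decomposition cause no trouble because reversing orientation does not affect scalar curvature.

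\textbf{Step 4 (bordism principle).} The identity in Step 2 says that the disjoint union of $2^{k}$ copies of $M$ is $S^{1}$-bordant, within the $\Omega_{\geq 4}$-category, to $\coprod_{i} P_{i}\times N_{i}$, the latter carrying an invariant PSC metric by Step 3. Using B\'erard-Bergery's equivariant surgery principle together with the refinement of Hanke \cite{MR2376283}, and exploiting $\pi_{1}(M_{\max})=0$, this bordism is converted through equivariant surgeries in codimension $\geq 3$ into an invariant PSC metric on the equivariant connected sum of $2^{k}$ copies of $M$.

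\textbf{Main obstacle.} The hardest part is the bordism principle itself in this restricted category: one must ensure that the surgeries replacing a disjoint union by a connected sum stay inside $\Omega_{\geq 4, *}^{\text{Spin}, S^{1}}$ (creating no codimension-two fixed strata), preserve the odd-type hypothesis, and remain compatible with the simple-connectedness of the maximum stratum. A secondary subtlety is arranging in Step~2 that only odd-type generators appear, which requires splitting $\Omega_{\geq 4, *}^{\text{Spin}, S^{1}}[\tfrac{1}{2}]$ into its even- and odd-type summands before invoking Theorem~\ref{sec:introduction-3}.
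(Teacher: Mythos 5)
Your outline matches the paper's strategy (reduce to $\Omega_{\geq 4}$, decompose via Theorem~\ref{sec:introduction-3}, equip generators with invariant PSC metrics, apply the bordism principle of Section~\ref{sec:resolv}), but two genuine gaps appear.

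First, you treat the semi-free generators too casually. You assert that the ``required invariant PSC metrics'' on the semi-free $N_i$ ``are constructed in \cite{wiemeler15:_circl},'' but the odd-type existence results there (Theorems~4.7 and 4.11) apply only to \emph{simply connected} semi-free spin $S^1$-manifolds, and even then only after passing to $2^{k}$ copies. The paper does not verify PSC on the semi-free additive generators directly; instead it produces an equivariant bordism between $2^{l}M$ and $M_1\amalg M_2$, performs equivariant surgery on the bordism to make the semi-free summand $M_1$ simply connected, and only then invokes \cite{wiemeler15:_circl}. In your phrasing this surgery step is missing, and without it the reference to \cite{wiemeler15:_circl} does not apply. (The $2^{k}$ in the statement therefore comes from two sources: inverting $2$ in Theorem~\ref{sec:introduction-3} \emph{and} the factor in the semi-free existence theorem.)

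Second, you never mention the condition ``normally symmetric in codimension two,'' which is essential for Theorem~\ref{sec:more-results} and Corollary~\ref{sec:resolv-sing-7} to run. Your Kähler metrics on generalized Bott manifolds are in fact normally symmetric because the full torus acts by isometries and provides the extra rotational $S^1$ near each codimension-two singular stratum, but this has to be said: an $S^1$-invariant PSC metric that is not normally symmetric does not feed into the bordism principle. Finally, your ``secondary subtlety'' about splitting $\Omega_{\geq 4,*}^{\mathrm{Spin},S^1}[\frac12]$ into even- and odd-type summands is a red herring: the paper does not perform such a splitting; the odd-type hypothesis is used only at the very end, when the semi-free piece $M_1$ is fed into the results of \cite{wiemeler15:_circl}, which dispose of the $\hat A_{S^1}$-obstruction automatically in the odd case.
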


\begin{theorem}
\label{sec:introduction-2}
  There is an equivariant bordism invariant \(\hat{A}_{S^1}\) with
  values in \(\mathbb{Z}[\frac{1}{2}]\), such that, for a Spin
  \(S^1\)-manifold \(M\) with \(\dim M\geq 6\), an effective \(S^1\)-action
  of even type and \(\pi_{1}(M_{\text{max}})=0\), the following
  conditions are
  equivalent:
  \begin{enumerate}
  \item \(\hat{A}_{S^1}(M)=0\).
  \item There is a \(k\in \N\) such that the equivariant connected sum of \(2^k\) copies of \(M\) admits an invariant metric of positive scalar curvature.
  \end{enumerate}
\end{theorem}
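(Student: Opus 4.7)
\emph{Plan.} The plan is to combine Theorem \ref{sec:introduction-3} with Hanke's equivariant surgery and bordism principle for invariant metrics of positive scalar curvature and with the equivariant Lichnerowicz formula. First I would construct the invariant \(\hat A_{S^1}\) using the decomposition of the bordism class supplied by Theorem \ref{sec:introduction-3}. Generalised Bott manifolds with restricted \(S^1\)-action admit natural invariant metrics of positive scalar curvature (induced from Kähler metrics built along the projective-bundle tower), and these metrics survive Riemannian product with any closed spin manifold. It therefore makes sense to declare \(\hat A_{S^1}\) trivial on the \(\Omega^{\text{Spin}}_*[\tfrac12]\)-submodule generated by those, and to set \(\hat A_{S^1}\) equal to a suitable \(\hat A\)-type invariant (for example the \(\hat A\)-genus of the principal-orbit reduction studied in \cite{wiemeler15:_circl}) on the semi-free generators. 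The first technical point is then to verify that this prescription assembles into a well-defined homomorphism \(\hat A_{S^1}\colon \Omega^{\text{Spin},S^1}_{\geq 4,*}[\tfrac12] \to \mathbb Z[\tfrac12]\); I would do this by analysing how two different decompositions of the same class differ via the \(\hat A\)-genus of their fixed-point or quotient data.

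For the implication (2) \(\Rightarrow\) (1), the key observation is that an equivariant zero-dimensional surgery (connected sum along orbits) is a bordism, so \([2^{k}M] = 2^{k}[M]\) in \(\Omega^{\text{Spin},S^1}_{\geq 4,*}\). If the connected sum admits an invariant metric of positive scalar curvature, the equivariant Lichnerowicz argument forces every equivariant index of the spin Dirac operator to vanish on it; reading this through the definition of \(\hat A_{S^1}\) on generators yields \(2^{k}\hat A_{S^1}(M) = 0\) in \(\mathbb Z[\tfrac12]\), and inverting \(2\) gives \(\hat A_{S^1}(M)=0\).

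For the implication (1) \(\Rightarrow\) (2), the hypothesis \(\pi_1(M_{\text{max}})=0\) together with the absence of codimension-two fixed components lets me invoke the equivariant bordism principle of B\'erard Bergery and Hanke: it suffices to produce a spin \(S^1\)-bordism from \(2^{k}M\) to a manifold admitting an invariant metric of positive scalar curvature, the bordism itself being free of codimension-two fixed strata. I would use Theorem \ref{sec:introduction-3} to write
\[
2^{k}[M]=\sum_i [N_i] \cdot [X_i] \in \Omega^{\text{Spin},S^1}_{\geq 4,*}[\tfrac12],
\]
with each \(N_i\) either semi-free or a restricted generalised Bott manifold. The generalised-Bott summands are represented by manifolds admitting invariant positive scalar curvature as above. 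The total \(\hat A\)-obstruction of the remaining, semi-free summands equals \(\hat A_{S^1}(M)\), which is zero by assumption, so the existence results of \cite{wiemeler15:_circl} provide invariant metrics of positive scalar curvature on those pieces as well.

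The main obstacle I expect is the well-definedness of \(\hat A_{S^1}\), which requires a careful understanding of the relations in \(\Omega^{\text{Spin},S^1}_{\geq 4,*}[\tfrac12]\) between the two classes of generators, together with the verification that the bordism realising the generator decomposition can be arranged without codimension-two fixed strata — otherwise Hanke's principle would fail to apply at the final step.
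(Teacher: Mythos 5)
Your proposal correctly identifies the key structural ingredients (Theorem~\ref{sec:introduction-3}, the bordism principle, and the semi-free results of \cite{wiemeler15:_circl}), but it has a genuine gap at the central step: you \emph{define} \(\hat A_{S^1}\) by prescribing its value on a chosen generating set (zero on generalized Bott pieces, Lott's obstruction on semi-free pieces) and then acknowledge that you do not know how to prove this prescription is well-defined. This is not a technicality one can postpone. Two different decompositions of \(2^{k}[M]\) in \(\Omega^{\mathrm{Spin},S^1}_{\geq 4,*}[\tfrac12]\) would have to give the same number, and verifying this amounts to knowing all relations between the two classes of generators, which the paper never needs to and does not establish. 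As written your construction is therefore not yet an invariant.

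The paper avoids this entirely by defining \(\hat A_{S^1}(M)\) \emph{intrinsically on the manifold}, in Section~\ref{sec:obstruction}: iteratively remove tubular neighborhoods of singular strata of \(M\) and apply a desingularization step (borrowed from \cite[\S4]{MR2376283}) to the remaining codimension-two strata, producing a free \(S^1\)-manifold-with-boundary \(M_{k+1}\); the boundary carries a canonical positive-scalar-curvature metric, and \(\hat A_{S^1}(M)\) is the index of the Dirac operator on \(M_{k+1}/S^1\) relative to that boundary metric. Well-definedness is then automatic, and what one checks is bordism invariance, which is the easier statement. With this definition, (2)\(\Rightarrow\)(1) is not a global Lichnerowicz vanishing on \(M\); it is the observation that an invariant psc metric which is normally symmetric in codimension two (guaranteed up to \(C^2\)-perturbation by Section~\ref{sec:loc_sym_are_gen}, since even-type actions have no codimension-two \(\mathbb Z_2\)-strata) descends through the desingularization and forces the relative index on \(M_{k+1}/S^1\) to vanish. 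Your statement that ``every equivariant index of the spin Dirac operator vanishes on \(M\)'' is not the argument used, and does not by itself address an index on a quotient-with-boundary.

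Two further points to adjust. First, you invoke ``Hanke's bordism principle,'' but Hanke's Theorem 34 is for fixed-point-free \(S^1\)-manifolds; the paper must first extend it to \(S^1\)-manifolds with \(\mathrm{codim}\,M^{S^1}\geq 4\) (Theorem~\ref{sec:more-results}, building on Lemmas~\ref{sec:resolv-sing}, \ref{sec:more-results-1}, \ref{sec:more-results-2}), and this extension is what your step (1)\(\Rightarrow\)(2) actually requires. Second, the identity you aim for at the end, namely that the obstruction localizes to the semi-free summand, is exactly the paper's \(\hat A_{S^1}(M)=2^{-l}\hat A_{S^1}(M_1/S^1)\); but in the paper this is a \emph{consequence} of having an a priori bordism invariant and the decomposition from Theorem~\ref{sec:spin-case} and Lemma~\ref{sec:s1-manifolds-not}, not a definition. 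Reversing that logical order, as you do, is precisely what creates the well-definedness problem.
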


For free \(S^1\)-manifolds \(M\), \(\hat{A}_{S^1}(M)\) is equal to the \(\hat{A}\)-genus of the orbit space of \(M\).
If the action on \(M\) is semi-free, then it coincides with a generalized \(\hat{A}\)-genus of the orbit space defined by Lott \cite{MR1758446}.
In general we can identify \(\hat{A}_{S^1}\) with the index of a Dirac operator defined on a submanifold of \(M_{\text{max}}/S^1\)  with boundary.

\(\hat{A}_{S^1}(M)\) can only be non-trivial if the dimension of \(M\) is \(4k+1\). Moreover, the usual \(\hat{A}\)-genus of \(M\) is zero in these dimensions. It also vanishes if \(M\) admits a metric of positive scalar curvature.
Therefore Theorems~\ref{sec:introduction-1} and \ref{sec:introduction-2} imply the following theorem which 
 was originally proved by Atiyah and Hirzebruch \cite{MR0278334} using the Lefschetz fixed point formula and some complex analysis.

 \begin{theorem}
   Let \(M\) be a Spin-manifold with a non-trivial \(S^1\)-action. Then \(\hat{A}(M)=0\).
 \end{theorem}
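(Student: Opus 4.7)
The plan is to combine Theorems~\ref{sec:introduction-1} and \ref{sec:introduction-2} with Lichnerowicz's vanishing theorem and the additivity of the $\hat{A}$-genus under connected sum of spin manifolds. First I would carry out some preliminary reductions. If $\dim M\not\equiv 0\pmod 4$, then $\hat{A}(M)=0$ for dimensional reasons, so assume $\dim M=4n$; the cases $\dim M\leq 4$ are classical, hence assume $\dim M\geq 8$. The kernel of the $S^1$-action is a finite cyclic subgroup, and passing to the effective quotient circle does not change the underlying manifold, so we may assume the action is effective. The hypothesis $\pi_1(M_{\text{max}})=0$ of Theorems~\ref{sec:introduction-1} and \ref{sec:introduction-2} will be arranged by pulling back the action to a suitable finite cover of $M$, which does not affect the vanishing of $\hat{A}$ thanks to multiplicativity under finite coverings.

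Once the hypotheses of Theorems~\ref{sec:introduction-1} and \ref{sec:introduction-2} are met, I would distinguish the parity type of the spin $S^1$-action. In the odd-type case Theorem~\ref{sec:introduction-1} directly provides a $k\in\N$ for which the $2^k$-fold equivariant connected sum $N=M^{\#2^k}$ carries an invariant metric of positive scalar curvature. In the even-type case the excerpt's observation that $\hat{A}_{S^1}$ can only be non-zero in dimensions $\equiv 1\pmod 4$, together with $\dim M=4n$, forces $\hat{A}_{S^1}(M)=0$; Theorem~\ref{sec:introduction-2} then again yields an invariant psc metric on some $N=M^{\#2^k}$.

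To finish, the Lichnerowicz formula applied to the resulting psc metric on $N$ gives $\hat{A}(N)=0$. Since the connected sum of spin manifolds is spin-bordant to the disjoint union, $\hat{A}$ is additive under $\#$, so $\hat{A}(N)=2^k\hat{A}(M)$ and consequently $\hat{A}(M)=0$.

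The main obstacle is the reduction to $\pi_1(M_{\text{max}})=0$: one needs a finite cover $\tilde{M}\to M$ to which the $S^1$-action (or a finite iterate of it) lifts equivariantly, preserves the spin structure and its parity, and for which $\pi_1(\tilde{M}_{\text{max}})$ vanishes. When $\pi_1(M_{\text{max}})$ is infinite this strategy breaks down and one has to argue differently, for instance by exploiting the generators produced in Theorem~\ref{sec:introduction-3} and checking that $\hat{A}$ vanishes on each generator separately, so that its vanishing descends to the whole of $\Omega_*^{\text{Spin},S^1}[\tfrac{1}{2}]$.
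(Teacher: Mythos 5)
Your primary route has a genuine gap: the reduction to $\pi_1(M_{\text{max}})=0$ via a finite cover cannot be carried out in general. The maximal stratum $M_{\text{max}}$ (the union of principal orbits) sits in an exact sequence
\[
\pi_1(S^1)\longrightarrow \pi_1(M_{\text{max}})\longrightarrow \pi_1(M_{\text{max}}/S^1)\longrightarrow 0,
\]
and the image of $\pi_1(S^1)=\mathbb{Z}$ is frequently infinite. For instance, if $S^1$ acts on $M=S^1\times N$ by rotation on the first factor, then $M_{\text{max}}=M$ and $\pi_1(M_{\text{max}})$ contains $\mathbb{Z}$ as a retract; no finite covering kills this. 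So Theorems~\ref{sec:introduction-1} and \ref{sec:introduction-2}, which have $\pi_1(M_{\text{max}})=0$ as a hypothesis, cannot be applied directly to a finite cover, and the reduction you describe breaks down exactly in the situations you flag at the end.

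Your fallback, though, is the route the paper actually takes, and it is the one that works: one does not pass through Theorems~\ref{sec:introduction-1} and \ref{sec:introduction-2} at all, but uses the bordism decomposition underlying Theorem~\ref{sec:introduction-3} (more precisely Theorem~\ref{sec:spin-case} together with Lemma~\ref{sec:s1-manifolds-not}) directly. After inverting $2$, $2^l M$ is equivariantly spin bordant to $M_1\amalg M_2$ with $M_1$ simply connected and semi-free and $M_2$ carrying an invariant psc metric (hence $\hat{A}(M_2)=0$ by Lichnerowicz). For $M_1$ the psc-obstruction $\hat{A}_{S^1}(M_1/S^1)$ from the semi-free theory of \cite{wiemeler15:_circl} vanishes purely for dimension reasons (it lives only in dimensions $\equiv 1\pmod 4$ while $\dim M\equiv 0\pmod 4$), so $2^{l'}M_1$ also admits psc and $\hat{A}(M_1)=0$. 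Bordism invariance of $\hat{A}$ then gives $2^{l+l'}\hat{A}(M)=0$, hence $\hat{A}(M)=0$. This sidesteps any hypothesis on $\pi_1(M_{\text{max}})$ and is therefore strictly more general than invoking Theorems~\ref{sec:introduction-1} and \ref{sec:introduction-2}. You should discard the finite-cover reduction and develop the second alternative you mention; as stated, the main argument does not establish the theorem.
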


The proofs of the above results are based on a generalization (Theorem
\ref{sec:more-results}) of the bordism principle proved by Hanke
\cite{MR2376283} to \(S^1\)-manifolds with fixed points.
When this has been established the theorems follow from the fact that generalized Bott manifolds admit
invariant metrics of positive scalar curvature and the existence
results from \cite{wiemeler15:_circl} for semi-free \(S^1\)-manifolds.

This paper is organized as follows.
In Sections \ref{sec:unitary} and \ref{sec:Spin_case} we prove Theorem~\ref{sec:introduction-3} for manifolds satisfying Condition C.
Then in Section \ref{sec:not_con_c} we generalize these results to manifolds not satisfying Condition C. This completes the proof of Theorem~\ref{sec:introduction-3}.

We next turn to the existence question for invariant metrics of positive scalar curvature on \(S^1\)-manifolds.
In Section~\ref{sec:resolv} we generalize the bordism principle of Hanke \cite{MR2376283} to \(S^1\)-manifolds with fixed points.
Then in Section~\ref{sec:loc_sym_are_gen} we show that, under mild assumptions on the isotropy groups of the codimension-two singular strata, normally symmetric metrics are dense in all invariant metrics on an \(S^1\)-manifold with respect to the \(C^2\)-topology.
Here normally symmetric metrics are metrics which are invariant under certain extra \(S^1\)-symmetries which are defined on small neighborhoods of the codimension-two singular strata.
In Section~\ref{sec:obstruction} we introduce our obstruction \(\hat{A}_{S^1}\) to invariant metrics of positive scalar curvature on Spin \(S^1\)-manifolds with actions of even type.
Then in Section~\ref{sec:atiyah_hirzebruch} we complete the proof of our existence results for metrics of positive scalar curvature on \(S^1\)-manifolds.
Moreover, we give a new proof of the above mentioned result of Atiyah and Hirzebruch using our results.

In the last Section~\ref{sec:rigidity} we give a proof of the rigidity of elliptic genera based on our Theorem~\ref{sec:introduction-3}.

I would like to thank Bernhard Hanke and Anand Dessai for helpful discussions on the subject of this paper.
I also want to thank Peter Landweber, Martin Kerin and the anonymous referee for detailed comments which helped to improve the presentation of the paper.

\section{Non-semi-free actions on non-Spin manifolds}
\label{sec:unitary}

In this section we prove a version of Theorem~\ref{sec:introduction-3} for the \(S^1\)-equivariant oriented bordism groups of manifolds which satisfy Condition C.
At first we recall Condition C.

\begin{definition}
\label{sec:resolv-sing-8}
  Let \(T\) be a torus and \(M\) a compact \(T\)-manifold. We say that \(M\) satisfies Condition C if for each closed subgroup \(H\subset T\), the \(T\)-equivariant normal bundle \(N(M^H,M)\) of the closed submanifold \(M^H\subset M\) is equipped with the structure of a complex \(T\)-vector bundle such that the following compatibility condition holds: 
If \(K\subset H\subset T\) are two closed subgroups, then the restriction of \(N(M^K,M)\) to \(M^H\) is a direct summand of \(N(M^H,M)\) as a complex \(T\)-vector bundle.
\end{definition}

Before we state and prove our main result of this section, we introduce some notation from \cite{MR695647}.

Let \(M\) be a \(S^1\)-manifold satisfying Condition C and \(x\in M\).
Then the isotropy group \(S^1_x\) of \(x\) acts linearly on the tangent space of \(M\) at \(x\).
There is an isomorphism of \(S^1_x\)-representations \(T_xM\cong \bar{V}_x\oplus V_x\), where \(\bar{V}_x\) is a trivial \(S^1_x\)-representation and \(V_x\) is a unitary \(S^1_x\)-representation without trivial summands.
The slice type of \(x\in M\) is defined to be the pair \([S^1_x;V_x]\).

A \(S^1\)-slice-type is a pair \([H;W]\), where \(H\) is a closed subgroup of \(S^1\) and \(W\) is a unitary \(H\)-representation without trivial summands.
We call a slice-type \([H;W]\) effective (semi-free, resp.) if the \(S^1\)-action on \(S^1\times_H W\) is effective (semi-free, resp.). 
By a family \(\mathcal{F}\) of slice-types we mean a collection of \(S^1\)-slice types such that if \([H;W]\in\mathcal{F}\) then for every \(x\in S^1\times_H W\), \([S^1_x,V_x]\in\mathcal{F}\).

A \(S^1\)-manifold \(M\) satisfying Condition C is of type \(\mathcal{F}\), if for every \(x\in M\), \([S^1_x,V_x]\in \mathcal{F}\).
We denote by \(\Omega_*^{C,S^1}[\mathcal{F}]\) the bordism groups of all oriented \(S^1\)-manifolds satisfying Condition C of type \(\mathcal{F}\).
We set  \(\Omega_*^{C,S^1}=\Omega_*^{C,S^1}[\text{All}]\), where \(\text{All}\) denotes the family of all slice-types.
We denote by \(\mathcal{AE}\) the family of all effective slice-types.

Let \(\rho=[H;W]\) be a \(S^1\)-slice type.
A complex \(S^1\)-vector bundle \(E\) over an \(S^1\)-manifold \(N\) is called of type \(\rho\) if the set of points in \(E\) having slice type \(\rho\) is precisely the zero section of \(E\).
Bordism of bundles of type \(\rho\) leads to a bundle bordism group \(\Omega_*^{C,S^1}[\rho]\).

If \(\mathcal{F}=\mathcal{F}'\cup \{\rho\}\) is a family of slice-types such that \(\mathcal{F}'\) is a family of slice types,
then \(\Omega_*^{C,S^1}[\rho]\) is isomorphic to the relative equivariant bordism group \(\Omega_*^{C,S^1}[\mathcal{F},\mathcal{F}']\), which consists of those \(S^1\)-manifolds with boundary of type \(\mathcal{F}\) whose boundary is of type \(\mathcal{F}'\).
Moreover, there is a long exact sequence of \(\Omega_*^{SO}\)-modules
\begin{equation*}
\xymatrix{
  \dots\ar[r]&\Omega^{C,S^1}_n[\mathcal{F}']\ar[r]&\Omega_n^{C,S^1}[\mathcal{F}]\ar^{\nu}[r]&\Omega_n^{C,S^1}[\rho]\ar^{\partial}[r]& \Omega^{C,S^1}_{n-1}[\mathcal{F}']\ar[r]& \dots.}
\end{equation*}
Here \(\nu\) is the map which sends a \(S^1\)-manifold satisfying Condition C of type \(\mathcal{F}\) to the normal bundle of the submanifold of points of type \(\rho\).
Moreover, \(\partial\) assigns to a bundle of type \(\rho\) its sphere bundle.

This sequence provides an inductive method of calculating the bordism groups of \(S^1\)-manifolds satisfying Condition C of type \(\mathcal{F}\).

\begin{theorem}
\label{sec:non-semi-free}
  Let \(\mathcal{F}=\mathcal{AE}\) or  \(\mathcal{F}=\mathcal{AE}-\{[S^1;W];\dim_\C W=1\}\) be the family of all effective \(S^1\)-slice types with or without, respectively, the slice types of the form \([S^1;W]\) with \(W\) an unitary \(S^1\)-representation of dimension one.
  Then \(\Omega_*^{C,S^1}[\mathcal{F}]\) is generated by semi-free
  \(S^1\)-manifolds and generalized Bott manifolds \(M\) with a
  restricted \(S^1\)-action.
\end{theorem}

As we will see shortly a generalized Bott manifold of dimension \(2n\) has an effective action of an \(n\)-dimensional torus \(T\) and the \(S^1\)-actions in the above theorem are restrictions of this \(T\)-action to suitable circle subgroups of \(T\).
In both cases these \(S^1\)-actions on generalized Bott manifolds can be chosen in such a way that they do not have fixed point components of codimension two.

Next we describe the definition of generalized Bott manifolds and the torus actions on them in more detail.
Note that the name ``Bott manifold'' was coined in \cite{MR1301185}.
The more general definition of a ``generalized Bott manifold'' was first given in \cite{MR2551516}.

A generalized Bott manifold \(M\) is a manifold of the following
type:
There exists a sequence of fiber bundles
\begin{equation*}
  M=N_l\rightarrow N_{l-1} \rightarrow \dots \rightarrow N_1\rightarrow N_0=\{pt\},
\end{equation*}
such that \(N_0\) is a point and each \(N_i\) is the projectivization
of a Whitney sum of \(n_i+1\) complex line bundles over \(N_{i-1}\).

The torus action on these manifolds can be constructed inductively as
follows.
At first note that each \(N_{i-1}\) is simply connected.
Therefore, if \(N_{i-1}\) has an effective action of a \((\sum_{j=1}^{i-1}
n_j)\)-dimensional torus \(T\), then the natural map \(H^2_T(N_{i-1};\mathbb{Z})\rightarrow H^2(N_{i-1};\mathbb{Z})\) from equivariant to ordinary cohomology is surjective.
Hence, by \cite{MR0461538}, the \(T\)-action
lifts to an action on each of the \(n_i+1\) line bundles from which
\(N_i\) is constructed.
Together with the action of an \((n_i+1)\)-dimensional torus given by
componentwise multiplication on each of these line bundles, this
action induces an effective action of a \((\sum_{j=1}^{i}n_j)\)-dimensional torus
\(T'\) on \(N_i\).
Note that, by \cite{MR0461538}, the \(T'\)-action constructed in this way is unique up to automorphisms of \(T'\), i.e. does not depend on the actual choice of the lifts of the \(T\)-actions.
So each \(N_i\) becomes a so-called torus manifold with this torus
action, i.e. the dimension of the acting torus is half of the
dimension of the manifold and there are fixed points.

We also remark here that generalized Bott manifolds admit torus invariant metrics of positive scalar curvature (and even of non-negative sectional curvature \cite{MR3355120}).
Indeed, since the (\(U(n+1)\)-invariant) standard metric on \(\mathbb{C} P^n\) has positive curvature, metrics of positive scalar curvature on generalized Bott manifolds are given by certain connection metrics.

\begin{proof}[Proof of Theorem \ref{sec:non-semi-free}]
  At first 
  we define a sequence of families \(\mathcal{F}_i\) of \(S^1\)-slice types such that
  \begin{enumerate}
  \item \(\mathcal{F}_{i+1}=\mathcal{F}_i\cup \{\sigma_{i+1}\}\) for a slice type \(\sigma_{i+1}\),
  \item \(\mathcal{F}=\bigcup_{i=0}^{\infty}\mathcal{F}_i\),
  \item \(\mathcal{F}_0\) consists of semi-free \(S^1\)-slice types.
  \end{enumerate}
When this is done it is sufficient to prove that each
\(\Omega_*^{C,S^1}[\mathcal{F}_i]\) is generated by semi-free
\(S^1\)-manifolds, generalized Bott manifolds and manifolds which bound in \(\Omega_*^{C,S^1}[\mathcal{F}]\).

Before we do that we introduce some notation for the \(S^1\)-slice types.
The irreducible non-trivial \(S^1\)-representations are denoted by
\begin{equation*}
  \dots,V_{-1},V_1,V_2,\dots,
\end{equation*}
where \(V_i\) is \(\C\) equipped with the \(S^1\)-action given by multiplication with \(s^i\) for \(s\in S^1\).
For \(\mathbb{Z}_m\subset S^1\) we denote the irreducible non-trivial \(\mathbb{Z}_m\)-representations by \(V_{-1},\dots,V_{-m+1}\).
Here \(V_i\) is \(\C\) with \(s\in \mathbb{Z}_m\) acting by multiplication with \(s^i\).

The effective \(S^1\)-slice types are then of the form
\begin{equation*}
  [S^1;V_{k(1)}\oplus V_{k(2)}\oplus\dots\oplus V_{k(n)}]
\end{equation*}
with \(k(1)\geq k(2)\geq\dots\geq k(n)\), \(n\geq 1\) and \(\gcd\{k(1),\dots, k(n)\}=1\) or of the form
\begin{equation*}
  [\mathbb{Z}_m;V_{k(1)}\oplus V_{k(2)}\oplus\dots\oplus V_{k(n)}]
\end{equation*}
with \(0>k(1)\geq k(2)\geq\dots\geq k(n)>-m\), \(n\geq 1\) if \(m>1\), and \(\gcd\{k(1),\dots, k(n),m\}=1\).
Such a slice type is semi-free if it is of the form \([S^1;V_{k(1)}\oplus V_{k(2)}\dots\oplus V_{k(n)}]\) with \(|k(i)|=1\) for all \(i\) or \([\mathbb{Z}_1;0]\).

The non-semi-free effective slice types fall into three different classes:
\begin{align*}
  \mathfrak{F}&=\{[H;W];\;H \text{ is finite}\}\\
  \mathfrak{SF}&=\{[S^1;W];\; W= V_{k(1)}\oplus\dots\oplus V_{k(n)}\oplus V_{-m} \text{ with }\\ &\;\;\; 0>k(1)\geq k(2)\geq\dots\geq k(n)>-m\}\\
  \mathfrak{T}&=\{\text{all other non-semi-free effective slice types}\}
\end{align*}
For \(\rho=[\mathbb{Z}_m; V_{k(1)}\oplus\dots\oplus V_{k(n)}]\in\mathfrak{F}\) we define \(e(\rho)= [S^1;V_{k(1)}\oplus\dots\oplus V_{k(n)}\oplus V_{-m}]\in \mathfrak{SF}\).

We define an ordering of the non-semi-free slice types as follows, similarly to the ordering in \cite[Section 6]{MR695647}:
Let
\begin{align*}
  \delta[S^1;V_{k(1)}\oplus\dots\oplus V_{k(n)}]&=\max \{|k(1)|,\dots,|k(n)|\}\\
  d[S^1;V_{k(1)}\oplus\dots\oplus V_{k(n)}]&=n,
\end{align*}
we order \(\mathfrak{T}\cup \mathfrak{SF}\) at first by \(\delta +d\), then by \(d\) and then lexicographically.
This together with the following conditions gives an ordering on  \(\mathfrak{T}\cup \mathfrak{SF}\cup \mathfrak{F}\).
If \(\rho\in \mathfrak{F}\) and \(\rho'\not\in \mathfrak{F}\) then define \(\rho<\rho'\) if \(e(\rho)\leq \rho'\).
If \(\rho\in \mathfrak{F}\) and \(\rho'\in \mathfrak{F}\)  then let \(\rho\leq\rho'\) if \(e(\rho)\leq e(\rho')\).
Note that with this ordering \(e(\rho)\) directly follows \(\rho\).

Denote the elements of \(\mathfrak{T}\cup \mathfrak{SF}\cup \mathfrak{F}\) by \(\sigma_1,\sigma_2,\dots\) so that \(\sigma_i<\sigma_j\) if \(i<j\).
Then one can check that
\begin{align*}
  \mathcal{F}_0&=\{\text{semi-free slice-types}\}-\{[S^1;V_i];\;|i|=1\},\\
  \mathcal{F}_i&=\mathcal{F}_0\cup\{\sigma_1,\dots,\sigma_i\}\; \text{ for } i\geq 1
\end{align*}
are families of slice types.
The difference between these families of slice types and the families of slice types \(\mathcal{ST}_i\) appearing in \cite{MR695647} is that in our ordering all the semi-free slice types are already contained in \(\mathcal{F}_0\) whereas in \cite{MR695647} they only appear in later families.
We choose this different ordering because we already have generators of bordism groups of semi-free \(S^1\)-actions which suit our needs \cite{wiemeler15:_circl} and we do not want the semi-free slice types to disturb our inductive argument. 

Now we prove by induction on \(i\) that
\(\Omega_*^{C,S^1}[\mathcal{F}_i]\) is generated by semi-free
\(S^1\)-manifolds, generalized Bott manifolds and manifolds which bound in \(\Omega_*^{C,S^1}[\mathcal{F}_{i+1}]\).

We may assume that \(i\geq 1\).
Then we have an exact sequence
\begin{equation*}
\xymatrix{
  \dots\ar[r]&\Omega^{C,S^1}_n[\mathcal{F}_{i-1}]\ar[r]&\Omega_n^{C,S^1}[\mathcal{F}_i]\ar^{\nu_i}[r]&\Omega_n^{C,S^1}[\sigma_i]\ar^{\partial_i}[r]& \Omega^{C,S^1}_{n-1}[\mathcal{F}_{i-1}] \ar[r]&.}
\end{equation*}

For a proof that this sequence is exact (in the case of unoriented bordism) see \cite[Theorem 1.3.2]{MR518871}.

At first assume that \(\sigma_i\in\mathfrak{F}\).
Then one can define a section \(q_i\) to \(\nu_i\) as in \cite[Section 6]{MR695647}.
For \(E\in \Omega_*^{C,S^1}[\sigma_i]\), \(q_i(E)\) is represented by a sphere bundle associated to a unitary \(S^1\)-vector bundle of rank at least two over a manifold with trivial \(S^1\)-action.

Hence \(\Omega_n^{C,S^1}[\mathcal{F}_i]\) is generated by manifolds of type \(\mathcal{F}_{i-1}\) and the \(q_i(E)\), \(E\in \Omega_n^{C,S^1}[\sigma_i]\).
Moreover, the disc bundle associated to \(E\) is bounded by \(q_i(E)\) and is of type \(\mathcal{F}_{i+1}\).
Therefore the claim follows in this case from the induction hypothesis.

Next assume that \(\sigma_i\in\mathfrak{SF}\).
Then \(\sigma_{i-1}\in \mathfrak{F}\) and \(\sigma_i=e(\sigma_{i-1})\).
Hence, as in \cite[Section 5]{MR695647}, one sees that \(\partial_i\) induces an isomorphism \(\Omega_n^{C,S^1}[\sigma_i]\rightarrow \image q_{i-1}\).
Therefore we have an isomorphism \(\Omega_*^{C,S^1}[\mathcal{F}_{i-2}]\rightarrow\Omega_*^{C,S^1}[\mathcal{F}_i]\).
Hence, the claim follows in this case from the induction hypothesis.

Finally, assume that \(\sigma_i\in\mathfrak{T}\).
Then one defines a section \(q_i\) to \(\nu_i\) as in \cite[Section 7]{MR695647}.
For \(E\in \Omega_*^{C,S^1}[\sigma_i]\), \(q_i(E)\) is represented by the projectivization \(P(\tilde{E})\) of a unitary \(S^1\)-vector bundle \(\tilde{E}\) of rank at least two over a manifold \(B\) with trivial \(S^1\)-action or by a manifold \(M_2\) of the following form.
First let \(M_1\) be the projectivization of a unitary \(S^1\)-vector bundle \(\tilde{E}_1\) over a manifold \(B\) with trivial \(S^1\)-action.
\(M_2\) is then the projectivization of a unitary \(S^1\)-vector bundle \(E_2\) of rank at least two over \(M_1\).

The (more complicated) manifolds of the second kind are needed because for some slice types \(\sigma_i\) the first construction does not lead to manifolds of type \(\mathcal{F}_i\). They have points with slice types \(\rho>\sigma_i\).

Moreover, one sees from the definition of \(q_i\) in \cite[Section 7]{MR695647} and the fact that \(\sigma_i\) is not semi-free that \(q_i(E)\) does not have fixed point components of codimension two.

Indeed, for manifolds of the first type, the \(S^1\)-action on \(P(\tilde{E})\) restricts to a linear action on each fiber.
The above claim therefore follows in this case because a linear action on \(\C P^n\) can only have codimension two fixed point components if it is semi-free and by the definition of \(\tilde{E}\) in \cite{MR695647} this only happens if \(\sigma_i\) is a semi-free slice type.

In the second case one can argue similarly. We can restrict the \(S^1\)-action to any fiber over a point in \(B\).
This is a \(\C P^{k_1}\)-bundle over \(\C P^{k_2}\) equipped with a linear \(S^1\)-action.
The fixed point components of such actions are also bundles with fiber  \(\C P^{k_1'}\) over \(\C P^{k_2'}\) with \(k_1'\leq k_1\) and \(k_2'\leq k_2\).
If such a component has codimension two, then we must have (\(k_1'=k_1-1\)  and \(k_2'=k_2\)) or (\(k_1'=k_1\)  and \(k_2'=k_2-1\)).
In the first case the action on the base is trivial and we can argue as in the case of manifolds of the first kind.
In the second case the action on the base must be semi-free and trivial on the fibers over a fixed point component (in the base).
But with the definition of \(M_2\) in \cite{MR695647}, this can only happen if \(\sigma_i\) is semi-free.

Since \(\Omega_*^{SO}(\prod_i BU(j_i))\) is generated as a module over
\(\Omega_*^{SO}\) by sums of line bundles over complex projective spaces, we may assume that the \(q_i(E)\) are generalized Bott manifolds.
Hence the claim follows in this case from the induction hypothesis.
\end{proof}

\begin{remark}
  The definition of the section \(q_i\) in \cite{MR695647} contains some confusing typos: In line 8 of page 103 it must be \(te_0\) instead of \(t^{-1}e_0\). Moreover, in the first remark on that page the base and fiber of the described bundles have to be interchanged.
\end{remark}

\section{The Spin case}
\label{sec:Spin_case}

In this section we prove the following theorem about the \(S^1\)-equivariant Spin-bordism groups of those manifolds which satisfy Condition C.

\begin{theorem}
\label{sec:Spin-case}
  Let \(\mathcal{F}=\mathcal{AE}\) or \(\mathcal{F}=\mathcal{AE}-\{[S^1;W];\dim_\C W=1\}\) be the family of all effective \(S^1\)-slice types with or without, respectively, the slice types of the form \([S^1;W]\), with \(W\) an unitary \(S^1\)-representation of dimension one.
  Then \(\Omega_*^{C,Spin,S^1}[\frac{1}{2}][\mathcal{F}]\) is
  generated by semi-free \(S^1\)-manifolds and generalized
  Bott manifolds with restricted \(S^1\)-action.
\end{theorem}

As in the non-Spin case in Theorem~\ref{sec:non-semi-free} one can choose the circle actions on the generalized Bott manifolds in such a way that they do not have fixed point components of codimension two.

A Spin structure on an oriented manifold \(M\) is here to be understood as a lift \(\hat{f}:M\rightarrow B\text{Spin}\) of the classifying map \(f:M\rightarrow BSO\) of the stable normal bundle of \(M\).
Spin bordisms \(W\)  between Spin manifolds \(M_1\) and \(M_2\) have Spin structures which extend the Spin structures on \(M_1\) and \(M_2\).
This notion of bordism then leads to Spin-bordism groups \(\Omega_*^{Spin}\) and their equivariant analogues \(\Omega_*^{Spin,S^1}\) and \(\Omega_*^{C,Spin,S^1}\).
Here we do not assume that the \(S^1\)-action preserves the Spin structure.

For the proof of Theorem~\ref{sec:Spin-case} we will use the same families of slice types and the following exact sequences analogous to the ones used in the proof of Theorem~\ref{sec:non-semi-free}.

\begin{equation*}
  \xymatrix{
    \dots \ar[r]&\Omega_n^{C,Spin,S^1}[\mathcal{F}_{i-1}]\ar[r]&\Omega_n^{C,Spin,S^1}[\mathcal{F}_i]\ar[d]\\ &&\Omega_n^{C,Spin,S^1}[\sigma_i]\ar[r]
&\Omega_{n-1}^{C,Spin,S^1}[\mathcal{F}_{i-1}]\ar[r]&\dots
}
\end{equation*}

Before we prove Theorem~\ref{sec:Spin-case}, we describe the groups \(\Omega_n^{C,Spin,S^1}[\sigma_i]\).
These groups have been computed by Saihi \cite{MR1846617}. She showed that
\begin{equation*}
  \Omega_n^{C,Spin,S^1}[H,V] \cong \Omega_{n-1-2\sum_{i=1}^k n_i}(B\Gamma_u\amalg B\Gamma_v,f),
\end{equation*}
where \(H\) is a finite group, \(\Gamma_u,\Gamma_v\) are connected two-fold covering groups of \(\Gamma=SO\times S^{1}\times_H\prod_i^{k}U(n_i)\) and \(f=f_u\amalg f_v\) where \(f_u: B\Gamma_u\rightarrow BSO,f_v: B\Gamma_v\rightarrow BSO\) are the fibrations induced by the projection \(\Gamma\rightarrow SO\).
Moreover, for a fibration \(p:B\rightarrow BSO\) we denote by \(\Omega_{*}(B,p)\) the bordism groups of manifolds with \(B\)-structure; i.e. we consider bordisms of oriented manifolds equipped with lifts of the classifying map of their stable normal bundles to \(B\).

Here the \(U(n_i)\) factors in \(\Gamma\) are due to the fact that if \(E\) is a bundle of type \([H,V]\) then \(E\) splits into a direct sum of weight bundles which correspond to the weight spaces of \(V\).
Moreover, the \(SO\)-factor corresponds to the fact that the base space carries a natural orientation induced from the orientations on the fibers and the total space of \(E\).
As the following lemma shows, the fact that we have to deal with two components is due to the fact that the Spin structure on \(S^1\times_HV\) is not unique.

\begin{lemma}
\label{sec:Spin-case-4}
The two components of \(B\Gamma_u\amalg B\Gamma_v\) are in one-to-one correspondence with the two Spin-structures \(\mathcal{U},\mathcal{V}\)  on \(S^1\times_H V\), in such a way that for \(E\in \Omega_*(B\Gamma_u\amalg B\Gamma_v,f_u\amalg f_v)\) we have \(E\in \Omega_*(B\Gamma_u,f_u)\) (\(E\in \Omega_*(B\Gamma_v,f_v)\), respectively) if and only if the restriction of the Spin structure on \(E\) to an invariant tubular neighborhood of an orbit of type \([H,V]\) in \(E\) coincides with the Spin structure \(\mathcal{U}\) (\(\mathcal{V}\), respectively).  
\end{lemma}
\begin{proof}
  The Spin structures on \(S^1\times_H V\times \R^k\) are in one to
  one-to-one correspondence to the Spin structures on \(S^1\times_H V\). 
  Every Spin structure on \(S^1\times_H V\) induces a
  homotopy class of lifts of the map \(pt\rightarrow B\Gamma\) to \(B\Gamma_u\amalg B\Gamma_v\).
  Moreover, every such lift induces a Spin structure on \(S^1\times_H V\).
  Since the fiber of \(B\Gamma_u\amalg B\Gamma_v\rightarrow B\Gamma\) has two components, there are exactly two homotopy classes of lifts.
  These lifts induce different Spin structures on \(S^1\times_H V\) because they represent different elements in \(\Omega_*(B\Gamma_u\amalg B\Gamma_v,f_u\amalg f_v)\).
Since there are exactly two Spin structures on \(S^1\times_H V\) the claim follows.
\end{proof}

After inverting two, we get the following isomorphism:
\begin{align*}
  \Omega_n^{C,Spin,S^1}[H,V][\frac{1}{2}] &\cong \Omega_{n-1-2\sum_{i=1}^kn_i}[\frac{1}{2}](B\Gamma \amalg B\Gamma,f)\\ &\cong \Omega_{n-1-2\sum_{i=1}^kn_i}^{Spin}[\frac{1}{2}]\Big(B\big(S^1\times_H\prod_i^kU(n_i)\big)\amalg B\big(S^1\times_H\prod_i^kU(n_i)\big)\Big)\\ &\cong \Omega_{n-1-2\sum_{i=1}^kn_i}^{Spin}[\frac{1}{2}]\Big(B\big((S^1/H)\times\prod_i^kU(n_i)\big)\amalg B\big((S^1/H)\times\prod_i^kU(n_i)\big)\Big).
\end{align*}

Here the first isomorphism is induced by the two-fold coverings
\(\Gamma_u,\Gamma_v\rightarrow \Gamma\). Moreover, the third
isomorphism is induced by the homomorphism
\begin{align*}
  S^1\times_H\prod_{i=1}^k U(n_i) &\rightarrow
  S^1/H\times\prod_{i=1}^k U(n_i),\\
[z,u_1,\dots,u_k]&\mapsto ([z],z^{\alpha_1}u_1,\dots,z^{\alpha_k}u_k).
\end{align*}

We also have
\begin{align*}
  \Omega_n^{C,Spin,S^1}[S^1,V][\frac{1}{2}] &\cong \tilde{\Omega}_n^{Spin}[\frac{1}{2}](MU(n_1)\wedge\dots\wedge MU(n_k))\\
&\cong \tilde{\Omega}_n^{SO}[\frac{1}{2}](MU(n_1)\wedge\dots\wedge MU(n_k))\\
&\cong \Omega_{n-2\sum_{i=1}^k n_i}^{SO}[\frac{1}{2}](BU(n_1)\times\dots\times BU(n_k))\\
&\cong \Omega_{n-2\sum_{i=1}^k n_i}^{Spin}[\frac{1}{2}](BU(n_1)\times\dots\times BU(n_k)).
\end{align*}

Here the first isomorphism has been shown by Saihi \cite{MR1846617}, while
the second and fourth isomorphism are deduced from the fact that, after
inverting two, \(\Omega_*^{SO}\) and \(\Omega_*^{\text{Spin}}\) become isomorphic.

Moreover, the third isomorphism is constructed as follows: Make a map
\(\phi:M\rightarrow MU(n_1)\wedge\dots\wedge MU(n_k)\) transversal to the
zero section of the classifying bundle over \(BU(n_1)\times\dots\times
BU(n_k)\).
Then restrict \(\phi\) to the preimage of the zero section.
This gives an element of \(\Omega_{*-2\sum_{i=1}^k n_i}^{SO}(BU(n_1)\times\dots\times BU(n_k))\).

We will need a basis of the \(\Omega_*^{Spin}[\frac{1}{2}]\)-module \(\Omega_*^{Spin}[\frac{1}{2}](BU(n_1)\times\dots\times BU(n_k))\).
For odd \(i>0\) denote by \(X_i\) the tensor product of two copies of the tautological line bundle over \(\C P^i\).
For even \(i>0\) denote by \(X_i\) the tensor product of two copies of the tautological line bundle over \(\C P(\gamma\otimes\gamma\oplus \C)\). Here \(\gamma\) denotes the tautological bundle over \(\C P^{i-1}\).
Moreover for \(i=0\) denote by \(X_0\) the trivial line bundle over a point.
Then the bundles
\begin{equation}
\label{eq:1}
  \prod_{i=1}^k\prod_{h=1}^{n_i}X_{j_{hi}}\quad \text{ with } 0\leq j_{1i}\leq\dots\leq j_{n_ki}
\end{equation}
form a basis of \(\Omega_*^{Spin}[\frac{1}{2}](BU(n_1)\times\dots\times BU(n_k))\) as a \(\Omega^{Spin}_*[\frac{1}{2}]\)-module.

This can be seen as follows: First of all the \(X_i\), \(i\geq 0\),
form a basis of \(\Omega_*^{Spin}[\frac{1}{2}](BS^1)\) as a module over
\(\Omega_*^{Spin}[\frac{1}{2}]\) because the characteristic numbers \(\langle
c_1^i(X_i),[B(X_i)] \rangle\) are powers of two \cite[Theorem 18.1]{MR0176478}.
Here \(B(X_i)\) denotes the base space of \(X_i\).
Moreover, for a torus \(T=(S^1)^k\), \(\Omega_*^{Spin}[\frac{1}{2}](BT)\) is isomorphic to \(\bigotimes_{i=1}^k \Omega_*^{Spin}[\frac{1}{2}](BS^1)\).
Now consider the Atiyah--Hirzebruch spectral sequences for
\[\Omega_*^{Spin}[\frac{1}{2}](BT)\]
 and
\[\Omega_*^{Spin}[\frac{1}{2}](BU(n_1)\times\dots\times BU(n_k)),\]
 where \(T\) is a maximal torus of \(U(n_1)\times\dots\times U(n_k)\).
They degenerate at the \(E^2\)-level.
Hence, it follows
 that the products in (\ref{eq:1}) form a basis of
 \(\Omega_*^{Spin}[\frac{1}{2}](BU(n_1)\times\dots\times BU(n_k))\) from a
 comparison of the singular homologies of the two spaces (for details
 see \cite[Section 4.3]{MR1407034}).

The proof of Theorem~\ref{sec:Spin-case} follows a similar inductive strategy as the proof of Theorem~\ref{sec:non-semi-free}.
To realize this strategy we have to construct Spin \(S^1\)-manifolds \(M\) of type \(\mathcal{F}_i\) such that we have some control on the normal bundle of the \(\sigma_i\)-stratum in \(M\).
Some of the manifolds \(M\) will be provided by total spaces of twisted projective space bundles \(\widetilde{\C TP}(E_1;E_2)\) which we construct first.

\begin{constr}
\label{sec:Spin-case-3}
Let \(E_i\rightarrow B_i\), \(i=1,2\), be unitary \(S^1\)-vector bundles such that the actions on the base manifolds \(B_i\) are trivial.
Then \(E_i\) splits as a sum of unitary \(S^1\)-vector bundles
\begin{equation*}
  E_i=\bigoplus_{j=1}^{k_i}E_i^{\beta_{ij}}
\end{equation*}
with \(\beta_{i1},\dots,\beta_{ik_i}\in \mathbb{Z}\) such that the action of \(z\in S^1\) on each \(E_i^{\beta_{ij}}\) is given by multiplication with \(z^{\beta_{ij}}\).
We call \(E_i^{\beta_{ij}}\) the weight bundle of the weight \(\beta_{ij}\).

Moreover, we write
\begin{equation*}
  E_i^+=\bigoplus_{\beta_{ij}>0} E_i^{\beta_{ij}}
\end{equation*}
and 
\begin{equation*}
  E_i^-=\bigoplus_{\beta_{ij}<0} E_i^{\beta_{ij}}.
\end{equation*}
Then we have \(E_i=E_i^+\oplus E_i^-\oplus E_i^0\).

Also let \(\beta_{2\max}\in\{\beta_{21},\dots,\beta_{2k_2}\}\) such that \(|\beta_{2\max}|=\max_{1\leq j\leq k_2} |\beta_{2j}|\).

To ensure that the total spaces of our twisted projective space bundles are Spin manifolds we have to make some assumptions on \(E_1\) and \(E_2\). We also do the construction separately in different cases depending on the dimensions of the vector bundles and the appearing weights.

In the following we assume that \(\dim_\C E_2\) is even and \(\dim_\C E_i^0=1\) for \(i=1,2\).
We also assume that one of the following three cases holds:
\begin{enumerate}
\item\label{item:4} \(\dim_\C E_1\) is odd.
\item\label{item:5} \(\dim_\C E_1\) is even, \(\beta_{2\max}=\pm 2\), \(\dim_\C E_2^{\beta_{2\max}}=1\), \(|\beta_{2j}|=|\beta_{1j'}|=1\) for \(\beta_{2j}\neq \beta_{2\max}\) and all \(j'\in \{1,\dots,k_1\}\).
\item\label{item:6} \(\dim_\C E_1\) is even, case \ref{item:5} does not hold and \(E_2^{\beta_{2\max}}\) splits off an \(S^1\)-invariant line bundle \(F\).
\end{enumerate}

In each of these cases we define a free \(T^2\)-action on  the product of sphere bundles \(S(E_1)\times S(E_2)\).

In case \ref{item:4} we define this action as follows
\begin{equation*}
  (s,t)\cdot ((e_1^+,e_1^-,e^0_1),(e_2^+,e_2^-,e^0_2))=((te_1^+,t^{-1}e_1^-,te^0_1),(tse_2^+,t^{-1}s^{-1}e_2^-,se^0_2)),
\end{equation*}
where \((s,t)\in S^1\times S^1=T^2\), \(e_i^{\pm}\in E_i^{\pm}\) and \(e_i^0\in E_i^0\).

If, in case \ref{item:5}, \(\beta_{2\max}<0\), we define this action by
\begin{equation*}
  (s,t)\cdot ((e_1^+,e_1^-,e^0_1),(e_2^+,e_2^-,e^0_2,f))=((te_1^+,t^{-1}e_1^-,te^0_1),(tse_2^+,t^{-1}s^{-1}e_2^-,se^0_2,s^{-1} f)),
\end{equation*}
where \((s,t)\in S^1\times S^1=T^2\), \(e_1^{\pm}\in E_1^{\pm}\), \(e_2^+\in E_2^+\), \(e_2^-\in E_2^-\ominus E_2^{\beta_{2\max}}\), \(e_i^0\in E_i^0\) and \(f\in E_2^{\beta_{2\max}}\).

If, in case \ref{item:5}, \(\beta_{2\max}>0\), we define this action by
\begin{equation*}
  (s,t)\cdot ((e_1^+,e_1^-,e^0_1),(e_2^+,e_2^-,e^0_2,f))=((te_1^+,t^{-1}e_1^-,te^0_1),(tse_2^+,t^{-1}s^{-1}e_2^-,se^0_2,sf)),
\end{equation*}
where \((s,t)\in S^1\times S^1=T^2\), \(e_1^{\pm}\in E_1^{\pm}\), \(e_2^+\in E_2^+\ominus E_2^{\beta_{2\max}}\), \(e_2^-\in E_2^-\), \(e_i^0\in E_i^0\) and \(f\in E_2^{\beta_{2\max}}\).

If, in case \ref{item:6}, \(\beta_{2\max}<0\), we define this action by
\begin{equation*}
  (s,t)\cdot ((e_1^+,e_1^-,e^0_1),(e_2^+,e_2^-,e^0_2,f))=((te_1^+,t^{-1}e_1^-,te^0_1),(tse_2^+,t^{-1}s^{-1}e_2^-,se^0_2,s^{-1}t^{-2} f)),
\end{equation*}
where \((s,t)\in S^1\times S^1=T^2\), \(e_1^{\pm}\in E_1^{\pm}\), \(e_2^+\in E_2^+\), \(e_2^-\in E_2^-\ominus F\), \(e_i^0\in E_i^0\) and \(f\in F\).

If, in case \ref{item:6}, \(\beta_{2\max}>0\), we define this action by
\begin{equation*}
  (s,t)\cdot ((e_1^+,e_1^-,e^0_1),(e_2^+,e_2^-,e^0_2,f))=((te_1^+,t^{-1}e_1^-,te^0_1),(tse_2^+,t^{-1}s^{-1}e_2^-,se^0_2,st^2 f)),
\end{equation*}
where \((s,t)\in S^1\times S^1=T^2\), \(e_1^{\pm}\in E_1^{\pm}\), \(e_2^+\in E_2^+\ominus F\), \(e_2^-\in E_2^-\), \(e_i^0\in E_i^0\) and \(f\in F\).

We denote the orbit space of these actions by \(\widetilde{\C TP}(E_1;E_2)\).
It is diffeomorphic to a \(\C P^{n_2}\)-bundle over a manifold \(M\), where \(M\) is a \(\C P^{n_1}\)-bundle over \(B_1\times B_2\).
Here we have \(n_i=\dim_\C E_i -1\).

Our definition of \(\widetilde{\C TP}(E_1;E_2)\) is a bit different from the one given in \cite{MR695647}, which is essentially our definition in the first case with the extra assumption that \(\dim_{\mathbb{C}}E_1^+=0\).
We have to alter their definition to guarantee that our twisted projective space bundles are Spin manifolds if \(E_1\) and \(E_2\) are Spin vector bundles over Spin manifolds.
Let us now check this fact.

One can compute the cohomology of the total space of the \(\C P^{n_1}\)-bundle \(\xi:M\rightarrow B_1\times B_2\) by using the Leray--Hirsch Theorem as
  \begin{equation*}
    H^*(M;\mathbb{Z}_2)\cong H^*(B_1\times B_2;\mathbb{Z}_2)[u]/(f(u)),
  \end{equation*}
where \(u\) is the \(\mathrm{mod}\,\, 2\)-reduction of the first Chern class of the tautological line bundle over \(M\) and \(f(u)\) is a polynomial of degree \(2(n_1+1)\).

Moreover, its tangent bundle splits as a direct sum
\begin{equation*}
  \xi^*T(B_1\times B_2)\oplus \eta,
\end{equation*}
where \(\eta\) is the tangent bundle along the fibers of \(\xi\).

Note that \(\eta\) is isomorphic to \(\gamma\otimes (E_1^+\oplus
E_1^0)\oplus \bar{\gamma}\otimes E_1^-\), where \(\gamma\) denotes the
tautological line bundle over \(M\).

From this fact it follows that the second Stiefel--Whitney class of \(M\) is given by
\begin{equation*}
  w_2(M)=w_2(B_1\times B_2)+ (n_1+1) u + w_2(E_1).
\end{equation*}

The same reasoning with \(M\) replaced by \(\widetilde{\C TP}(E_1;E_2)\) and \(B_1\times B_2\) replaced by \(M\) shows that the second Stiefel--Whitney class of \(\widetilde{\C TP}(E_1;E_2)\) is given by
\begin{equation*}
  w_2(\widetilde{\C TP}(E_1;E_2))=w_2(B_1\times B_2)+w_2(E_1)+w_2(E_2).
\end{equation*}
Therefore it follows that \(\widetilde{\C TP}(E_1;E_2)\) is a Spin manifold if \(B_1\), \(B_2\) are Spin-manifolds and \(E_1\), \(E_2\) are Spin-vector bundles.

The \(S^1\)-actions on \(E_1\) and \(E_2\) induce an \(S^1\)-action on \(\widetilde{\C TP}(E_1;E_2)\).
The submanifolds 
\begin{equation*}
N_{\beta_{1j},\beta_{2j'}}=(S(E_{1}^{\beta_{1j}})\times S(E_2^{\beta_{2j'}}))/T^2\subset \widetilde{\C TP}(E_1; E_2)^{S^1}  
\end{equation*}
are fixed by this \(S^1\)-action on \(\widetilde{\C TP}(E_1;E_2)\).

In the proof of Theorem \ref{sec:Spin-case} we want the manifolds \(\widetilde{\C TP}(E_1;E_2)\) to be of type \(\mathcal{F}_i\) for certain families \(\mathcal{F}_i\) of slice types.
For this it is important to know the weights of the \(S^1\)-representations on the normal bundles of \(N_{\beta_{1j},\beta_{2j'}}\).
In particular it is important how they differ from the weights at \(N_{0,0}\).
The weights of the \(S^1\)-action on the normal bundle of \(N_{\beta_{1j},\beta_{2j'}}\) can be computed as listed in Tables \ref{tab:1}, \ref{tab:2}, \ref{tab:3}.

\begin{table}
  \centering
  \begin{tabular}{|l|l|}
    \((\beta_{1j},\beta_{2j'})\)& weights at \(N_{\beta_{1j},\beta_{2j'}}\)\\\hline\hline
    \((0,0)\)& \(\beta_{11},\dots,\beta_{1k},\beta_{21},\dots,\beta_{2k'}\)\\\hline
    \((0,\gamma)\) with \(\gamma\neq 0\)& \(-|\gamma|\), \\ & \(\beta_{11},\dots,\beta_{1k}\), \\ & \(\sign\beta_{2i}(|\beta_{2i}|-|\gamma|)\) for \(i=1,\dots k'\) and \(\beta_{2i}\neq 0\)\\\hline
    \((\gamma,0)\) with \(\gamma\neq 0\)& \(-|\gamma|\), \\&\(\sign\beta_{1i}(|\beta_{1i}|-|\gamma|)\) for \(i=1,\dots k\) and \(\beta_{1i}\neq 0\),\\ & \(\sign\beta_{2i}(|\beta_{2i}|-|\gamma|)\) for \(i=1,\dots k'\) and \(\beta_{2i}\neq 0\)\\\hline
    \((\gamma_1,\gamma_2)\) with \(\gamma_i\neq 0\)& \(-|\gamma_1|\),\\ & \(\sign\beta_{1i}(|\beta_{1i}|-|\gamma_1|)\) for \(i=1,\dots k\) and \(\beta_{1i}\neq 0\),\\ & \(\sign\beta_{2i}(|\beta_{2i}|-|\gamma_2|)\) for \(i=1,\dots k'\) and \(\beta_{2i}\neq 0\),\\ & \(|\gamma_1|-|\gamma_2|\)
  \end{tabular}
  \caption{The weights of the \(S^1\)-action on \(\widetilde{\C TP}\) in the first case.}
  \label{tab:1}
\end{table}

\begin{table}
  \centering
  \begin{tabular}{|l|l|}
    \((\beta_{1j},\beta_{2j'})\)& weights at \(N_{\beta_{1j},\beta_{2j'}}\)\\\hline\hline
    \((0,0)\)&\(\beta_{11},\dots,\beta_{1k},\beta_{21},\dots,\beta_{2k'}\)\\\hline
    \((0,\pm 1)\)& \(\beta_{11},\dots,\beta_{1k}, \mp 1\)\\\hline
    \((0,\pm 2)\)& \(\beta_{11},\dots,\beta_{1k}, -2, \mp 1\)\\\hline
    \((\pm 1,0)\), \((\pm 1,\pm 1)\)& \(-1,\pm 2\)\\\hline
    \((\pm 1, \pm 2)\) & \(-1,-2\)
  \end{tabular}
  \caption{The weights of the \(S^1\)-action on \(\widetilde{\C TP}\) in the second case.}
  \label{tab:2}
\end{table}

\begin{table}
  \centering
  \begin{tabular}{|l|l|}
    \((\beta_{1j},\beta_{2j'})\)& weights at \(N_{\beta_{1j},\beta_{2j'}}\)\\\hline\hline
    \((0,0)\)& \(\beta_{11},\dots,\beta_{1k},\beta_{21},\dots,\beta_{2k'}\)\\\hline
    \((0,\gamma)\) with \(\gamma\neq 0\)& \(-|\gamma|\), \(\beta_{11},\dots,\beta_{1k}\),\\ & \(\sign\beta_{2i}(|\beta_{2i}|-|\gamma|)\) for \(i=1,\dots k'\) and \(\beta_{2i}\neq 0\)\\\hline
    \((\gamma,0)\) with \(\gamma\neq 0\)& \(-|\gamma|\),\\&\(\sign\beta_{1i}(|\beta_{1i}|-|\gamma|)\) for \(i=1,\dots k\) and \(\beta_{1i}\neq 0\),\\ 
                                    & \(\sign\beta_{2i}(|\beta_{2i}|-|\gamma|)\) for \(i=1,\dots k'\) and \(\beta_{2i}\neq 0,\alpha\),\\ & \(\sign \alpha (|\alpha|-2|\gamma|)\)\\\hline
    \((\gamma_1,\gamma_2)\) with \(\gamma_i\neq 0\),& \(-|\gamma_1|\),\\ & \(\sign\beta_{1i}(|\beta_{1i}|-|\gamma_1|)\) for \(i=1,\dots k\) and \(\beta_{1i}\neq 0\),\\
 \(\gamma_2\neq \alpha\) & \(\sign\beta_{2i}(|\beta_{2i}|-|\gamma_2|)\) for \(i=1,\dots k'\) and \(\beta_{2i}\neq 0,\alpha\),\\ & \(|\gamma_1|-|\gamma_2|\),\\ &\(\sign\alpha(|\alpha|-|\gamma_1|-|\gamma_2|)\)\\\hline
\((\gamma,\alpha)\) with \(\gamma\neq 0\)& \(-|\gamma|\),\\ & \(\sign\beta_{1i}(|\beta_{1i}|-|\gamma|)\) for \(i=1,\dots k\) and \(\beta_{1i}\neq 0\), \\ & \(\sign\beta_{2i}(|\beta_{2i}|-|\alpha|+|\gamma|)\) for \(i=1,\dots k'\) and \(\beta_{2i}\neq 0,\alpha\),\\ & \(2|\gamma|-|\alpha|\)
  \end{tabular}
  \caption{The weights of the \(S^1\)-action on \(\widetilde{\C TP}\) in the third case.}
  \label{tab:3}
\end{table}

In the proof of Theorem \ref{sec:Spin-case} we will always have
\begin{equation*}
  \max_{1\leq j\leq k_1}|\beta_{1j}|<\max_{1\leq j\leq k_2} |\beta_{2j}|= |\beta_{2\max}|.
\end{equation*}

Assume that this holds and that \((\beta_{1j},\beta_{2j'})\neq (0,0), (0,\pm \beta_{2\max})\), then we have in cases \ref{item:4} and \ref{item:6}:

\begin{equation*}
  \max\{|\text{weights at } N_{\beta_{1j},\beta_{2j'}}|\} < \max \{|\text{weights at } N_{0,0}|\}.
\end{equation*}

In case \ref{item:5} let \(Y_{\beta_{1j},\beta_{2j'}}\) be the component of \(\widetilde{\C TP}(E_1;E_2)^{S^1}\) containing \(N_{\beta_{1j},\beta_{2j'}}\).
Then we have
\begin{equation*}
  \codim Y_{\beta_{1j},\beta_{2j'}} < \dim E_1+\dim E_2-2=\codim Y_{0,0}
\end{equation*}
if \((\beta_{1j},\beta_{2j'})\neq (0,0),(0,\pm 2), (\pm 1, \pm 2)\) or \((\beta_{1j},\beta_{2j'})\neq (0,0),(0,\pm 2)\) and \(\dim_\C E_1\neq 2\).

If we have \((\beta_{1j},\beta_{2j'})=(\pm 1, \pm 2)\), \(\dim_\C E_1=2\) and \(\dim_\C E_2\geq 4\), then the weight \(-2\) appears in the weights at \(N_{\beta_{1j},\beta_{2j'}}\) with multiplicity greater than one.
Therefore the \(S^1\)-representation at \(N_{\beta_{1j},\beta_{2j'}}\) does not coincide with the \(S^1\)-representation at \(N_{0,0}\).

Note that in all three cases we have that the weights of the \(S^1\)-representation at \(N_{0,\beta_{2\max}}\) are given by
\begin{equation*}
  \{-|\beta_{2\max}|, \beta_{11},\dots,\beta_{1k}, \sign\beta_{2i}(|\beta_{2i}|-|\beta_{2\max}|);\; i=1,\dots, k' \text{ and } \beta_{2i}\neq 0\}.
\end{equation*}
\end{constr}

\begin{proof}[Proof of Theorem~\ref{sec:Spin-case}]
Now as in the non-Spin case we construct sections to the maps
\begin{equation*}
 \nu_i: \Omega_n^{C,Spin,S^1}[\mathcal{F}_i][\frac{1}{2}]\rightarrow\image \nu_i\subset \Omega_n^{C,Spin,S^1}[\sigma_i][\frac{1}{2}].
\end{equation*}

At first assume that \(\sigma_i=[H,V]\) with \(H\) finite and \(V=\bigoplus_{i=1}^k V_{\alpha_i}\) with \(0> \alpha_1\geq \dots\geq \alpha_k > \alpha_{k+1}=-m=-\ord H\).
Then, by Lemma \ref{sec:Spin-case-4}, the two copies of \(B\Gamma\) in \(\Omega^{C,Spin,S^1}_n[\frac{1}{2}][\sigma_i]\) correspond to the two Spin structures on
\begin{equation*}
  S^1\times_HV.
\end{equation*}

There are equivariant diffeomorphisms
\begin{align*}
  S^1/H\times \bigoplus_{i=1}^k V_{\alpha_i}&\rightarrow S^1\times_H \bigoplus_{i=1}^k V_{\alpha_i}\\
([z],v_1,\dots,v_k)&\mapsto [z,z^{-\alpha_1}v_1,\dots,z^{-\alpha_k}v_k]
\end{align*}
and
\begin{align*}
  S^1/H\times (V_{\alpha_1+m}\oplus\bigoplus_{i=2}^k V_{\alpha_i})&\rightarrow S^1\times_H \bigoplus_{i=1}^k V_{\alpha_i}\\
([z],v_1,\dots,v_k)&\mapsto [z,z^{-\alpha_1-m}v_1,\dots,z^{-\alpha_k}v_k],
\end{align*}
where the action  on the left hand spaces is given by the product action; and the action  on the right hand spaces  is induced by left multiplication on the first factor.

Moreover, we can (non-equivariantly) identify
\begin{equation*}
  S^1/H\times \bigoplus_{i=1}^k V_{\alpha_i} \cong S^1\times \bigoplus_{i=1}^k V_{\alpha_i}
\end{equation*}
and
\begin{equation*}
  S^1/H\times (V_{\alpha_i+m}\oplus\bigoplus_{i=2}^k V_{\alpha_i}) \cong S^1\times (V_{\alpha_i+m}\oplus\bigoplus_{i=2}^k V_{\alpha_i}).
\end{equation*}
Composing these diffeomorphisms leads to the map
\begin{align*}
  S^1\times \C^k\cong S^1\times\bigoplus_{i=1}^k V_{\alpha_i}&\rightarrow S^1\times (V_{\alpha_i+m}\oplus\bigoplus_{i=2}^k V_{\alpha_i})\cong S^1\times \C^k\\
(z,v_1,\dots,v_k)&\mapsto (z,zv_1,\dots, v_k).
\end{align*}

This map interchanges the two Spin structures on \(S^1\times \C^k\).
Therefore one of the two Spin structures on \(S^1\times_H V\) equivariantly bounds \(D^2\times (\bigoplus_{i=1}^k V_{\alpha_i})\).
The other equivariantly bounds \(D^2\times (V_{\alpha_i+m}\oplus\bigoplus_{i=2}^k V_{\alpha_i})\).

Let \(E=\prod_{i=1}^{k+1}\prod_{h=1}^{n_i}X_{j_{hi}}\in\Omega_*^{Spin}[\frac{1}{2}](B\Gamma_{\text{first copy}})\). Then the sphere bundle associated to \(\prod_{i=1}^{k+1}\prod_{h=1}^{n_i}X_{j_{hi},\alpha_i}\) gives the desired preimage.
Here \(X_{j,\alpha}\) denotes the bundle \(X_j\) equipped with the action of \(S^1\) induced by multiplication with \(z^{\alpha}\) for \(z\in S^1\).

This can be seen as follows:
The normal bundle of \(S(E)^H\) in \(S(E)\) bounds the normal bundle of \(D(E)^H\) in \(D(E)\).
Therefore a tubular neighborhood of an orbit of type \([H,V]\) in \(S(E)\), equipped with its natural Spin structure, equivariantly bounds \(D^2\times V\times \R^k\).
Hence, this Spin structure is the one which corresponds to \(B\Gamma_{\text{first copy}}\).
In particular, \(S(E)\) is the desired preimage of \(E\).

Next assume that \(E=\prod_{i=1}^{k+1}\prod_{h=1}^{n_i}X_{j_{hi}}\in \Omega_*[\frac{1}{2}](B\Gamma_{\text{second copy}})\); then, by an argument similar to the one from above, the sphere bundle  associated to 
\begin{equation*}(X_{j_{n_11},\alpha_1}\otimes \bar{X}_{j_{1(k+1)},m}\oplus X_{j_{1k+1},-m}) \times \prod_{h=1}^{n_1-1}X_{j_{h1},\alpha_1}\times\prod_{i=2}^k\prod_{h=1}^{n_i}X_{j_{hi},\alpha_i}\end{equation*}
gives the desired preimage.
Here \(\bar{X}_{j_{1(k+1)}}\) is the dual vector bundle of \(X_{j_{1(k+1)}}\).
Moreover, \(X_{j_{n_11}}\otimes \bar{X}_{j_{1(k+1)}}\oplus X_{j_{1k+1}}\) is the vector bundle over the base space \(B_1\times B_2\)  of \(X_{j_{n_11}}\times X_{j_{1k+1}}\) given by the Whitney sum of \(pr_1^*(X_{j_{n_11}})\otimes pr_2^* (\bar{X}_{j_{1(k+1)}})\) and \(pr_2^* (X_{j_{1k+1}})\).

Next assume that \(\sigma_i=[S^1,V]\) where \(V=\bigoplus_{i=1}^k V_{\alpha_i}^{n_i}\) with \(\alpha_i> \alpha_{i+1}\).
For this case we will use Construction~\ref{sec:Spin-case-3} of twisted projective space bundles \(\widetilde{\C TP}(E_1;\;\; E_2)\).

{\bf Case 1:}
At first also assume that all weights of \(V\) are negative and the minimal weight appears with multiplicity one.
Then one can see that the map \(\Omega_*^{C,Spin,S^1}[\frac{1}{2}][\sigma_i]\rightarrow \Omega_*^{C,Spin,S^1}[\frac{1}{2}][\mathcal{F}_{i-1}]\) has the same image as the first section constructed above for slice types with finite isotropy groups.
Hence it is injective.

{\bf Case 2:}
Next assume that all weights except \(\alpha_1\) of \(V\) are negative, the minimal weight and \(\alpha_1\) appear with multiplicity one and \(0>\alpha_1-m\geq \alpha_i\) for all \(i>1\), where \(-m=\alpha_k\).
Then one can see that the map \(\Omega_*^{C,Spin,S^1}[\frac{1}{2}][\sigma_i]\rightarrow \Omega_*^{C,Spin,S^1}[\frac{1}{2}][\mathcal{F}_{i-1}]\) has the same image as the second section constructed above for slice types with finite isotropy groups.

Then
\begin{equation*}
  (X_{j_{11},\alpha_1-m}\otimes \bar{X}_{j_{1k}, \alpha_k}\oplus X_{j_{1k}, \alpha_k}) \times \prod_{i>1}\prod_{h=1}^{n_i} X_{j_{hi},\alpha_i},\quad 0\leq j_{1i}\leq j_{2i}\leq \dots \leq j_{n_ii} \text{ for } i\geq 1
\end{equation*}
is a basis of \(\Omega_*^{C,Spin}[\frac{1}{2}][\sigma_i]\) as an
\(\Omega_*^{Spin}[\frac{1}{2}]\)-module.
To see this, note that the map
\begin{align*}
  \Omega_*[\frac{1}{2}](BU(1)\times BU(1)\times \prod_{i=2}^{k-1} BU(n_i))&\rightarrow \Omega_*[\frac{1}{2}](BU(1)\times BU(1)\times
  \prod_{i=2}^{k-1} BU(n_i))\\
  X\oplus Y \oplus Z \;& \mapsto \; X\otimes \bar{Y}\oplus Y\oplus Z
\end{align*}
is an isomorphism. Moreover, the elements given above are the images of
the basis (\ref{eq:1}) under this isomorphism.

If \(j_{11}\geq j_{n_22}\) with \(\alpha_2=\alpha_1-m\) or \(\alpha_1-m>\alpha_2\), then the image of \[X_{j_{11},{\alpha_1-m}}\otimes X_{j_{1k}, \alpha_k}\oplus X_{j_{1k},\alpha_k} \times \prod_{k>i>1}\prod_{h=1}^{n_i} X_{j_{hi},\alpha_i} \] under the map
\(\Omega_*^{C,Spin,S^1}[\frac{1}{2}][\sigma_i]\rightarrow \Omega_*^{C,Spin,S^1}[\frac{1}{2}][\mathcal{F}_{i-1}]\)
is part of a basis of the image of the second section constructed for normal orbit types with finite isotropy groups.
Hence, for these basis elements we do not have to define preimages under \(\nu_i\).

Therefore assume that \(\alpha_2=\alpha_1-m\) and \(j_{11}< j_{n_22}\).
Then 
\begin{equation*}
  \begin{split}
  \widetilde{\C TP}(\C\times \prod_{h=1}^{n_2-1}X_{j_{h2},\alpha_2} \times \prod_{2<i<k} \prod_{h=1}^{n_i}&X_{j_{hi},\alpha_i};\; \\ &(X_{j_{11},{\alpha_1-m}}\otimes \bar{X}_{j_{1k},\alpha_k}\oplus X_{j_{1k},\alpha_k})\times X_{j_{2n_2},\alpha_2}\times \C)  
  \end{split} 
  \end{equation*}
is mapped by \(\nu_i\) to
\begin{equation*}
  \begin{split}
     \prod_{h=1}^{n_2-1}X_{j_{h2},\alpha_2}\times \prod_{2<i<k}\prod_{h=1}^{n_i} X_{j_{hi},\alpha_i}
\times &( X_{j_{11},{\alpha_1-m}}\otimes \bar{X}_{j_{1k},\alpha_k}\oplus X_{j_{1k},\alpha_k}\times X_{j_{2n_2},\alpha_2}\\ &- X_{j_{11},{\alpha_1-m}}\times X_{j_{2n_2},\alpha_2}\otimes \bar{X}_{j_{1k,\alpha_k}}\oplus X_{j_{1k},\alpha_k}).
  \end{split}
  \end{equation*}
Here the two components of \(\widetilde{\C TP}(E_1;E_2)^{S^1}\) which correspond to the two summands of the above sum are given by \(\widetilde{\C TP}(\C; \C)\) and \(\widetilde{\C TP}(\C; X_{j_{1k},\alpha_k})\).
The other fixed point components are of different slice types.
Therefore, we have found the generators  in this case.

{\bf Case 3:}
Next assume that \(\sigma_i=[S^1,V]\), such that all weights except \(\alpha_1\) of \(V\) are negative, the minimal weight and \(\alpha_1\) appear with multiplicity one and \(\alpha_1<-\alpha_k=m\), \(\alpha_1-m< \alpha_2\).
Then
\begin{equation*}
  \widetilde{\C TP}(\C\times\prod_{h=1}^{n_2-1} X_{j_{h2},\alpha_2}\times \prod_{2<i<k}\prod_{h=1}^{n_i} X_{j_{hi},\alpha_i};\;\; X_{j_{11},\alpha_1}\times X_{j_{n_22},\alpha_2}\times X_{j_{1k},\alpha_k}\times \C)
  \end{equation*}
is mapped by \(\nu_i\) to
\begin{equation*}
    \prod_{i=1}^k\prod_{h=1}^{n_i} X_{j_{hi},\alpha_i}.
\end{equation*}
Here the component of \(\widetilde{\C TP}(E_1;E_2)^{S^1}\) which corresponds to this product is given by \(\widetilde{\C TP}(\C;\C)^{S^1}\).
The other components of the fixed point set are of different slice types.
Therefore we have found the generators in this case.

{\bf Case 4:}
Next assume that \(\sigma_i=[S^1,V]\), such that \(\alpha_1 > 0\) appears with multiplicity at least two and the minimal weight appears with multiplicity one and is negative and \(\alpha_1<-\alpha_k=m\).
Then
\begin{equation*}
  \widetilde{\C TP}(\C\times \prod_{h=3}^{n_1} X_{j_{h1},\alpha_1} \times \prod_{1<i<k}\prod_{h=1}^{n_i} X_{j_{hi},\alpha_i};\;\; X_{j_{11},\alpha_1}\times X_{j_{21},\alpha_1}\times X_{j_{1k},\alpha_k}\times \C)
\end{equation*}
is mapped by \(\nu_i\) to
\begin{equation*}
      \prod_{i=1}^k\prod_{h=1}^{n_i} X_{j_{hi},\alpha_i}
\end{equation*}
Therefore, we have found the generators in this case.

{\bf Case 5:}
Next assume that \(\sigma_i=[S^1,V]\), such that \(\alpha_1>\alpha_2 > 0\) and the minimal weight appears with multiplicity one and is negative and \(\alpha_1<-\alpha_k=m\).
Then
\begin{equation*}
  \widetilde{\C TP}(\C\times \prod_{h=2}^{n_1}X_{j_{h1},\alpha_1} \times \prod_{h=2}^{n_2}X_{j_{h2},\alpha_2} \times \prod_{1<i<k}\prod_{h=1}^{n_i} X_{j_{hi},\alpha_i};\;\; X_{j_{11},\alpha_1}\times X_{j_{12},\alpha_2} \times X_{j_{1k},\alpha_k} \times \C)  
  \end{equation*}
is mapped by \(\nu_i\) to
\begin{equation*}
    \prod_{i=1}^k\prod_{h=1}^{n_i} X_{j_{hi},\alpha_i}
\end{equation*}
Therefore, we have found the generators in this case.

{\bf Case 6:}
Next assume that \(\alpha_1=2\) and \(\alpha_2=\pm 1\) are the only weights.
Then
\begin{equation*}
  M=\C P(X_{j_{11}\alpha_1}\oplus \C) \times \C P(X_{j_{12},\alpha_2}\oplus \C)
\end{equation*}
is a manifold which satisfies Condition \(C\) such that the weights at the four fixed point components are \((-2,-1),(-2,1),(2,-1),(2,1)\).
If \(\alpha_2=-1\), we conjugate the complex structure on the summand \(N(M^{S^1},M^{\mathbb{Z}_2})\) of the normal bundle of the last fixed point component.
In this way we get the desired generators in this case.

{\bf Case 7:}
In all other cases let \(\beta\in\{\alpha_1,\alpha_k\}\) be a weight of maximal norm. Then there is a second weight \(\delta\neq \beta, -\beta\).
In these cases one of
\begin{equation*}
  \widetilde{\C TP}(\C\times \prod_{h=2}^{n_\delta}X_{j_{h\delta},\delta}\times \prod_{\alpha_i\neq \delta,-\beta,\beta} \prod_{h=1}^{n_i}X_{j_{hi},\alpha_i};\;\; X_{j_{1\delta},\delta}\times \prod_{h=1}^{n_\beta} X_{j_{h,\beta},\beta}\times\prod_{h=1}^{n_{-\beta}} X_{j_{h,-\beta},-\beta}\times \C)  
  \end{equation*}
or
\begin{equation*}
  \begin{split}
  \widetilde{\C TP}(\C\times \prod_{\alpha_i\neq -\beta,\beta}\prod_{h=1}^{n_i} X_{j_{hi},\alpha_i};\;\;\prod_{h=1}^{n_\beta} X_{j_{h,\beta},\beta}\times\prod_{h=1}^{n_{-\beta}} X_{j_{h,-\beta},-\beta}\times \C)  
  \end{split}
  \end{equation*}
is Spin and mapped by \(\nu_i\) to
\begin{equation*}
  \prod_{i=1}^k\prod_{h=1}^{n_i} X_{j_{hi},\alpha_i}.
\end{equation*}
Therefore, we have found the generators in this case.

Hence, we have found generators of the image of \(\nu_i\) in all cases. This completes the proof of Theorem~\ref{sec:Spin-case}.
\end{proof}

\section{$S^1$-manifolds not satisfying Condition C}
\label{sec:not_con_c}

In this section we prove Theorem~\ref{sec:introduction-3}.
In view of Theorems~\ref{sec:non-semi-free} and \ref{sec:Spin-case} it suffices to prove the following theorem.

\begin{theorem}
\label{sec:s1-manifolds-not-1}
  For \(G=SO\) or \(G=Spin\) and  \(\mathcal{F}=\mathcal{AE}\) or \(\mathcal{F}=\mathcal{AE}-\{[S^1,W]\,;\,\dim_\C W=1\}\), the natural map
  \begin{equation*}
    \Omega_{n}^{C,G,S^1}[\frac{1}{2}][\mathcal{F}]\rightarrow \Omega_{n}^{G,S^1}[\frac{1}{2}][\mathcal{F}]
  \end{equation*}
is surjective.
Here \(\Omega_{n}^{G,S^1}[\mathcal{F}]\) denotes the bordism group of
\(n\)-dimensional \(G\)-manifolds with effective \(S^1\)-action of
type \(\mathcal{F}\).  
\end{theorem}

To prove this theorem we first show that it holds whenever \(\Omega_n^{G,S^1}[\frac{1}{2}][\mathcal{F}]\) is replaced by \(\hat{\Omega}_n^{G,S^1}[\frac{1}{2}][\mathcal{F}]\), where \(\hat{\Omega}_n^{G,S^1}[\mathcal{F}]\) is the bordism group of \(n\)-dimensional \(G\)-manifolds \(M\) with effective \(S^1\)-action of type \(\mathcal{F}\) such that all closed singular strata are orientable. We also assume here that the singular strata of the bordisms are orientable.
This is Proposition~\ref{sec:s1-manifolds-not} below.

Then, in Lemma~\ref{sec:s1-manifolds-not-3} we show that the natural map \[\hat{\Omega}_n^{G,S^1}[\frac{1}{2}][\mathcal{F}]\rightarrow \Omega_n^{G,S^1}[\frac{1}{2}][\mathcal{F}]\] which forgets that the singular strata are orientable is actually an isomorphism.

\begin{remark}
\label{sec:resolv-sing-5}
  It has been shown by Edmonds \cite{MR603614} that the singular strata in a Spin-\(S^1\)-manifold are always orientable (see also Section 10 of \cite{MR954493}).
Therefore there is no difference between \(\hat{\Omega}_n^{\text{Spin},S^1}[\frac{1}{2}][\mathcal{F}]\) and \(\Omega_n^{\text{Spin},S^1}[\frac{1}{2}][\mathcal{F}]\).

Moreover, note that a singular stratum \(M^{H}\) in an effective orientable \(S^1\)-manifold can only be non-orientable if \(H\) is finite and of even order, since otherwise the normal bundle of \(M^H\) admits an invariant complex structure.
\end{remark}

\begin{prop}
\label{sec:s1-manifolds-not}
  For \(G=SO\) or \(G=Spin\) and  \(\mathcal{F}=\mathcal{AE}\) or \(\mathcal{F}=\mathcal{AE}-\{[S^1,W]\,;\,\dim_\C W=1\}\), the natural map
  \begin{equation*}
    \Omega_{n}^{C,G,S^1}[\frac{1}{2}][\mathcal{F}]\rightarrow \hat{\Omega}_{n}^{G,S^1}[\frac{1}{2}][\mathcal{F}]
  \end{equation*}
is surjective.
Here \(\hat{\Omega}_{n}^{G,S^1}[\mathcal{F}]\) denotes the bordism group of
\(n\)-dimensional \(G\)-manifolds \(M\) with effective \(S^1\)-action of
type \(\mathcal{F}\) such that, for all subgroups \(H\subset S^1\),
\(M^H\) is orientable.  
\end{prop}

\begin{proof}
To prove this proposition, we use an induction as in Sections~\ref{sec:unitary} and \ref{sec:Spin_case}.

To do so we first recall the definition of \emph{real} slice types.
A real \(S^1\)-slice type is a pair \([H,W]_{\R}\) where \(H\) is a closed subgroup of \(S^1\) and \(W\) is a isomorphism class of real \(H\)-representations with \(W^H=\{0\}\).
In this section we refer to the slice types of the previous section, i.e. to pairs \([H,W]\) with \(W\) an isomorphism class of complex \(H\)-representations, as \emph{complex} slice types.
 
The real slice types of \(S^1\), which appear in an orientable \(S^1\)-manifold, are then given by
\begin{align}
\label{eq:4}
  [\mathbb{Z}_m;&\prod_{-m<\alpha\leq -\frac{m}{2}} V_\alpha^{j_\alpha}]_{\R},&
  [S^1;&\prod_{\alpha<0} V_\alpha^{j_\alpha}]_{\R}.
\end{align}
Here the \(V^{j_\alpha}_\alpha\) are to be understood as real representations of the groups in question, that is, after forgetting the complex structure.

In particular, for each real slice type \(\rho\) there is a preferred complex slice type \(\sigma\) such that after forgetting the complex structure we have \(\rho=\sigma\).

Note that this slice type is the first complex slice type representing \(\rho\) with respect to the ordering introduced in Section~\ref{sec:unitary}.
We call two complex slice types equivalent if they represent the same real slice type.

The ordering of preferred complex slice types induces an ordering of the real slice types.
Moreover, this leads to a sequence \(\mathcal{G}_i\) of families of slice types such that
\begin{align*}
  \mathcal{G}_0&\text{ consists of semi-free real slice types}\\
  \mathcal{G}_{i+1}&=\mathcal{G}_i\amalg\{\rho_{i+1}\} \text{ for a real slice type } \rho_{i+1}\\
  \bigcup_{i=0}^{\infty}\mathcal{G}_i&=\mathcal{F}
\end{align*}

We will show by induction on \(i\) that every \([M] \in \Omega_*^{G,S^1}[\frac{1}{2}][\mathcal{G}_i]\) can be represented by an effective \(S^1\)-manifold which satisfies Condition C.

We have the following commutative diagram with exact rows:
\begin{equation*}
  \xymatrix{
    \Omega^{C,G,S^1}_*[\frac{1}{2}][\mathcal{F}_{h(i)-1}]\ar[r]\ar[d]&
    \Omega^{C,G,S^1}_*[\frac{1}{2}][\mathcal{F}_{h(i)}]\ar^{\nu^C_{h(i)}}[r]\ar[d]&
    \Omega^{C,G,S^1}_*[\frac{1}{2}][\sigma_{h(i)}]\ar^{\partial_{h(i)}^C}[r]\ar[d]&\Omega^{C,G,S^1}_{*-1}[\frac{1}{2}][\mathcal{F}_{h(i)-1}]\ar[d]\\
    \hat{\Omega}^{G,S^1}_*[\frac{1}{2}][\mathcal{G}_{i-1}]\ar[r]&
    \hat{\Omega}^{G,S^1}_*[\frac{1}{2}][\mathcal{G}_i]\ar^{\nu_i}[r]&
\hat{\Omega}^{G,S^1}_*[\frac{1}{2}][\rho_i]\ar^{\partial_i}[r]&\hat{\Omega}^{G,S^1}_{*-1}[\frac{1}{2}][\mathcal{G}_{i-1}]
}
\end{equation*}
Here \(\sigma_{h(i)}\) denotes the preferred complex slice type
representing the real slice type \(\rho_i\). Moreover the first row is the exact sequence considered in Sections \ref{sec:unitary} or \ref{sec:Spin_case} if we are in the oriented case or the Spin case, respectively.
 The second row is the analogous sequence in the real setting.
The group \(\hat{\Omega}^{G,S^1}_*[\rho_i]\) denotes the bordism group of \(S^1\)-vector bundles of type \(\rho_i\) with a \(G\)-structure on the total space and orientable base space.
The vertical maps are given by forgetting the complex structures on the normal bundles to the singular strata.

A diagram chase shows that we are done with the induction step if we
can show that the composition of maps
\begin{equation}
\label{eq:3}
  \Omega_*^{C,G,S^1}[\frac{1}{2}][\mathcal{F}_{h(i)}]\rightarrow
  \hat{\Omega}_*^{G,S^1}[\frac{1}{2}][\mathcal{G}_i]\rightarrow
  \ker \partial_i
\end{equation}
is surjective.

If \(E\) is an \(S^1\)-vector bundle of type \(\rho=[H,V]_\R\) as in (\ref{eq:4}), then \(E\) splits as a direct sum of weight bundles \(E_\alpha\) corresponding to the decomposition of \(V\) into weight spaces \(V_\alpha^{j_\alpha}\).
In almost all cases these weight bundles have natural \(S^1\)-invariant complex structures.
The only exception is the case that \(H=\mathbb{Z}_m\) is of even order and \(\alpha=-\frac{m}{2}\). 
If the total space of \(E\) is oriented and the base space of \(E\) is orientable, then this weight bundle is also a orientable bundle.
Moreover, an orientation of the base determines an orientation for the weight bundle and vice versa.
Hence, for \(m\) even we get the following isomorphisms similar to the isomorphisms in the case of complex slice types.

\begin{equation*}
  \begin{split}
  \hat{\Omega}^{SO,S^1}_n[\mathbb{Z}_m,\prod_{\alpha=-m+1}^{-\frac{m}{2}}V_\alpha^{j_\alpha}]_\R[\frac{1}{2}]&\cong\Omega^{SO}_{n-1-2\sum_{\alpha=-m+1}^{-m/2}j_\alpha}[\frac{1}{2}]\Big(B(S^1\times_{\mathbb{Z}_m}
  SO(2j_{-m/2}))\\ &\;\;\;\quad\quad\times B(\prod_{\alpha=-m+1}^{-m/2-1} U(j_\alpha))\Big)\\
\end{split}
\end{equation*}

\begin{equation}
  \label{sec:s1-manifolds-not-5}
\begin{split}
  \hat{\Omega}^{Spin,S^1}_n[\mathbb{Z}_m,\prod_{\alpha=-m+1}^{-\frac{m}{2}}V_\alpha^{j_\alpha}]_\R[\frac{1}{2}]&\cong\Omega^{Spin}_{n-1-2\sum_{\alpha=-m+1}^{-m/2}j_\alpha}[\frac{1}{2}]\Big(B(S^1\times_{\mathbb{Z}_m}
  SO(2j_{-m/2}))\\ &\;\;\;\quad\quad\times B(\prod_{\alpha=-m+1}^{-m/2-1} U(j_\alpha))\Big) \text{ if } j_{-m/2}>0\\
\end{split} 
\end{equation}

\begin{align*}
 \hat{\Omega}^{Spin,S^1}_n[\mathbb{Z}_m,\prod_{\alpha=-m+1}^{-\frac{m}{2}-1}V_\alpha^{j_\alpha}]_\R[\frac{1}{2}]&\cong\Omega^{Spin}_{n-1-2\sum_{\alpha=-m+1}^{-m/2-1}j_\alpha}[\frac{1}{2}]\Big(B(S^1/\mathbb{Z}_m)\times\prod_{\alpha=-m+1}^{-m/2-1} BU(j_\alpha)\\ &\;\;\;\quad\quad\amalg B(S^1/\mathbb{Z}_m)\times\prod_{\alpha=-m+1}^{-m/2-1} BU(j_\alpha)\Big)\\
&\cong\Omega^{Spin}_{n-1-2\sum_{\alpha=-m+1}^{-m/2-1}j_\alpha}[\frac{1}{2}](B\Gamma_{\text{first copy}}\amalg B\Gamma_{\text{second copy}})\\
\end{align*}

Here the two components \(B\Gamma_{\text{first copy}}\) and \(B\Gamma_{\text{second copy}}\) in the last formula correspond to the two copies of \(B\Gamma\) in the description of 
the Spin bundle bordism groups considered in Section~\ref{sec:Spin_case}.
We have a connected space on the right hand side of the second formula because in that case we do not have an orientation on the base space of \(S^1\)-vector bundles of the type considered there induced from the orientation of the total space.

For \(H=\mathbb{Z}_m\) with odd \(m\) we get similar isomorphisms. The only difference is that the \(SO\) factors do not appear.
For \(H=S^1\) we have:

\begin{equation}
   \label{eq:5}
  \Omega^{G,S^1}_n[S^1,\prod_{\alpha<0}V_\alpha^{j_\alpha}]_\R[\frac{1}{2}]\cong\Omega^{G}_{n-2\sum_{\alpha}j_\alpha}[\frac{1}{2}](\prod_{\alpha<0} BU(j_\alpha)),
\end{equation}
For more details on the construction of these isomorphisms in the Spin case see \cite{MR1846617}.

In particular, it follows that the maps \(\Omega^{C,G,S^1}_n[\sigma_{h(i)}][\frac{1}{2}]\rightarrow \hat{\Omega}^{G,S^1}_n[\rho_i][\frac{1}{2}]\) are surjective.

Indeed, if \(\rho_i\neq [\mathbb{Z}_m,
\prod_{\alpha=-m}^{-\frac{m}{2}}V_{\alpha}^{j_\alpha}]_\R\) with
\(j_{-\frac{m}{2}}> 0\) and \(m\) even, then a bundle of type \(\rho_i\) has a natural invariant complex structure. Hence, in this case these maps are isomorphisms.

In the case that \(\rho_i=[\mathbb{Z}_m,
\prod_{\alpha=-m}^{-\frac{m}{2}}V_\alpha^{j_{\alpha}}]_\R\) with
\(j_{-\frac{m}{2}}>0\) and \(m\) even, we consider the map
\begin{equation*}
  f:BT\rightarrow B\tilde{G},
\end{equation*}
where \(T\) is a maximal torus of
\(\tilde{G}=(S^1/\mathbb{Z}_{\frac{m}{2}})\times
SO(2j_{\frac{m}{2}})\times \prod_{\alpha\neq \frac{m}{2}}
U(j_{\alpha})\).
This map induces a surjective map in singular homology with
coefficients in \(\mathbb{Z}[\frac{1}{2}]\) because
\(\tilde{G}\) does not have odd torsion.

Since \(H_*(BT;\mathbb{Z}[\frac{1}{2}])\) and
\(H_*(B\tilde{G};\mathbb{Z}[\frac{1}{2}])\) are concentrated in even
degrees, the Atiyah--Hirzebruch spectral sequence for the bordism
groups (with two inverted) for these spaces degenerates at the
\(E^2\)-level.
Hence, it follows that \(f\) induces a surjective map on bordism
groups.

Because this map factors through \(B\tilde{G}_C\), where \(\tilde{G}_C=(S^1/\mathbb{Z}_{\frac{m}{2}})\times
U(j_{\frac{m}{2}})\times \prod_{\alpha\neq \frac{m}{2}}
U(j_{\alpha})\) and \(\tilde{G}\) and \(\tilde{G}_C\) are two-fold covering groups of

\begin{align*}
  (S^1\times_{\mathbb{Z}_m}
SO(2j_{\frac{m}{2}}))\times \prod_{\alpha\neq \frac{m}{2}}
U(j_{\alpha})&&\text{and}&&(S^1\times_{\mathbb{Z}_m}
U(j_{\frac{m}{2}}))\times \prod_{\alpha\neq \frac{m}{2}}
U(j_{\alpha}),
\end{align*}
respectively, it follows from the above isomorphism that \(\Omega^{C,G,S^1}_n[\sigma_{h(i)}][\frac{1}{2}]\rightarrow \hat{\Omega}^{G,S^1}_n[\rho_i][\frac{1}{2}]\) is surjective.

As shown in the proofs of Theorems~\ref{sec:non-semi-free} and~\ref{sec:Spin-case} we have a section \(q_{h(i)}:\image \nu^C_{h(i)}\rightarrow
\Omega^{C,G,S^1}_*[\mathcal{F}_{h(i)}][\frac{1}{2}]\).
Since  \(\Omega^{C,G,S^1}_n[\sigma_{h(i)}][\frac{1}{2}]\rightarrow \hat{\Omega}^{G,S^1}_n[\rho_i][\frac{1}{2}]\) is surjective, in order to show that (\ref{eq:3}) is surjective, it suffices to consider those real slice types for which \(\nu^C_{h(i)}\) is not surjective.
These are of the form
\begin{equation*}
  [S^1;V_{\alpha_1}^{j_1}\times \dots \times V_{\alpha_{k-1}}^{j_{k-1}}\times V_{-m}]_{\mathbb{R}}
\end{equation*}
with \(0>\alpha_1>\alpha_2 > \dots >\alpha_k=-m\).
For these slice types \(\sigma_{h(i)-1}\) is of the form \[[\mathbb{Z}_m,V_{\alpha_1}^{j_1}\times \dots \times V_{\alpha_{k-1}}^{j_{k-1}}].\]

{\bf Case 1:}
At first assume that \(\alpha_1<-\frac{m}{2}\).
Then \(\Omega^{C,G,S^1}[\sigma_{h(i)-1}][\frac{1}{2}]\) and  \(\hat{\Omega}^{G,S^1}[\rho_{i-1}][\frac{1}{2}]\) are isomorphic.
Moreover, \(\partial^C_{h(i)}\) is a injection into the image of \(q_{h(i)-1}\).
In the non-Spin case we have \(\image q_{h(i)-1}=\image \partial_{h(i)}^C\).
In the Spin case we have \(q_{h(i)-1}(B\Gamma_{\text{first
    copy}})=\image \partial_{h(i)}^C\).
Since \(\Omega^{C,G,S^1}[\sigma_{h(i)}][\frac{1}{2}]\) and
\(\hat{\Omega}^{G,S^1}[\rho_{i}][\frac{1}{2}]\) are isomorphic, it follows
that \(\partial_i\) is also injective.
Therefore in this case the maps  \(\nu_{h(i)}^{C}\) and \(\nu_i\) are the zero maps.
Hence we have shown that the composition (\ref{eq:3}) is surjective.

{\bf Case 2:}
Next assume that \(\alpha_1>-\frac{m}{2}\) and \(\alpha_{i_0}>- \frac{m}{2}>\alpha_{i_0+1}\) for some \(i_0\).
Then \(\sigma_{h(i)}\) is equivalent to  the complex slice type \(\tilde{\sigma}=[S^1;V_{-\alpha_1}^{j_1}\times\dots\times V_{-\alpha_{i_0}}^{j_{i_0}}\times V_{\alpha_{i_0+1}}^{j_{i_0+1}}\times\dots\times V_{-m}]\).

In the non-Spin case the map \(\nu^C_{\tilde{\sigma}}\) is surjective for this slice type.
Moreover, the section \(q_{\tilde{\sigma}}\) of \(\nu^C_{\tilde{\sigma}}\) has the following property:
If \(X\in\Omega^{C,SO,S^1}_*[\frac{1}{2}][\tilde{\sigma}]\), then there is no point in \(q_{\tilde{\sigma}}(X)\) with slice type \(\sigma'\) such that \(\sigma_{h(i)}\leq\sigma'<\tilde{\sigma}\).
Therefore in the family of slice types \(\mathcal{F}_{h(i)}\) we can replace \(\sigma_{h(i)}\) by \(\tilde{\sigma}\).
The map \(q_{\tilde{\sigma}}\) is then still a section to the map \(\nu^C_{\tilde{\sigma}}\) with image in \(\Omega_*^{C,SO,S^1}[\mathcal{F}_{h(i)}]\).

In the Spin case we have to look at three cases:
\begin{itemize}
\item \(\alpha_2>-\frac{m}{2}\);
\item \(\alpha_1\) is the only weight with \(\alpha_1>-\frac{m}{2}\) and appears with multiplicity greater than one;
\item \(\alpha_1\) is the only weight with \(\alpha_1>-\frac{m}{2}\) and appears with multiplicity one.
\end{itemize}

As seen in Cases 4 and 5 of the proof of Theorem~\ref{sec:Spin-case},
in the first two cases the map \(\nu^C_{\tilde{\sigma}}\) is
surjective. Moreover, one can check that the section
\(q_{\tilde{\sigma}}\) has the same property as in the non-Spin case.
Therefore one can argue as in the non-Spin case to get the conclusion
in these two cases.

In the last case we can replace \(\sigma_{h(i)}\) by
\(\tilde{\sigma}\) for the same reason as in the first two cases.
Therefore we have the commutative diagram with non-exact rows
\begin{equation*}
  \xymatrix{
    \Omega_{*+1}^{C,G,S^1}[\frac{1}{2}][\tilde{\sigma}] \ar^{\partial^C_{h(i)}}[r]\ar[d]&
    \Omega_*^{C,G,S^1}[\frac{1}{2}][\mathcal{F}_{h(i)-1}] \ar^{\nu^C_{h(i)-1}}[r]\ar[d]&
    \Omega_*^{C,G,S^1}[\frac{1}{2}][\sigma_{h(i)-1}] \ar[d]\\
    \hat{\Omega}_{*+1}^{G,S^1}[\frac{1}{2}][\rho_{i}] \ar^{\partial_i}[r]&
    \hat{\Omega}_*^{G,S^1}[\frac{1}{2}][\mathcal{G}_{i-1}] \ar^{\nu_{i-1}}[r]&
    \hat{\Omega}_*^{G,S^1}[\frac{1}{2}][\rho_{i-1}]
}
\end{equation*}
where the vertical maps on the left and right are isomorphisms because there is no \(\alpha_i\) equal to \(-\frac{m}{2}\) or \(\frac{m}{2}\).
Moreover, in Case 2 of the proof of Theorem \ref{sec:Spin-case} it was
shown that the restriction of the upper right map to the image of the
upper left map is injective.
Therefore it follows that the kernel of the lower left map is
contained in the image of the kernel of the upper left map under the
isomorphism on the left. 

From this it follows by diagram chasing that the map
\(\Omega_*^{C,G,S^1}[\mathcal{F}_{h(i)}]\rightarrow
\Omega_*^{G,S^1}[\mathcal{G}_i]\) is surjective.
Hence also the composition (\ref{eq:3}) is surjective.

{\bf Case 3:}
Next assume \(\alpha_1>-\frac{m}{2}\) and \(\alpha_{i_0}=-\frac{m}{2}\). 
Then \(\sigma_i\) is equivalent to the complex slice type \({\tilde{\sigma}}=[S^1;V_{-\alpha_1}^{j_1}\times V_{-\alpha_{i_0}}^{j_{i_0}}\times \prod_{i\neq 1,i_0} V_{\alpha_i}^{j_i}]\).
As shown in Case 5 of the proof of Theorem~\ref{sec:Spin-case}, for this slice type \(\nu^C_{\tilde{\sigma}}\) is surjective in the Spin case.
In the non-Spin case this map is surjective because \(-\alpha_1>0\).
Moreover, the section \(q_{\tilde{\sigma}}\) to \(\nu^C_{\tilde{\sigma}}\) has the following property:
If \(X\in\Omega^{C,G,S^1}_*[\frac{1}{2}][\tilde{\sigma}]\), then there is no point in \(q_{\tilde{\sigma}}(X)\) with slice type \(\sigma'\) such that \(\sigma_{h(i)}\leq\sigma'<\tilde{\sigma}\).
Therefore we can argue as in the previous cases to get the conclusion.

{\bf Case 4:}
Finally assume that \(\alpha_1=-\frac{m}{2}\).
Then we have a commutative diagram:
\begin{equation*}
  \xymatrix{
    \hat{\Omega}^{G,S^1}[\frac{1}{2}][\rho_i]\ar[r]^-{\cong}\ar[ddd]^{\nu_{i-1}\circ\partial_i}
    & \Omega_*[\frac{1}{2}]\left(B\left((S^1/\mathbb{Z}_{m})\times U(j_1)\right)\right)\ar[d] & \Omega_*[\frac{1}{2}]\left(B\left((S^1/\mathbb{Z}_{\frac{m}{2}})\times U(j_1)\right)\right)\ar[l]_-\cong \ar[d]^{f_1}\\
    &\Omega_*[\frac{1}{2}]\left(B(S^1\times_{\mathbb{Z}_{m}} U(j_1))\right)\ar[d] & \Omega_*[\frac{1}{2}]\left(B(S^1\times_{\mathbb{Z}_{\frac{m}{2}}} U(j_1))\right)\ar[l]_-\cong \ar[d]^{f_2}\\
    &\Omega_*[\frac{1}{2}]\left(B(S^1\times_{\mathbb{Z}_{m}} SO(2j_1))\right) \ar[d] & \Omega_*[\frac{1}{2}]\left(B(S^1\times_{\mathbb{Z}_{\frac{m}{2}}} SO(2j_1))\right)\ar[l]_-\cong \ar[d]^{f_3}\\
    \hat{\Omega}^{G,S^1}[\frac{1}{2}][\rho_{i-1}]\ar[r]^-{\cong}
    &\Omega_*[\frac{1}{2}]\left(B\left((S^1/\mathbb{Z}_{\frac{m}{2}})\times_{\mathbb{Z}_2} SO(2j_1)\right)\right) & \Omega_*[\frac{1}{2}]\left(B\left((S^1/\mathbb{Z}_{\frac{m}{2}})\times SO(2j_1)\right)\right)\ar[l]_-\cong}
\end{equation*}

Here we have omitted the \(BU(j_i)\) factors with \(i>1\) in the middle and right hand column. Note here that \(\rho_{i-1}\) has finite isotropy group of even order.

Moreover, the horizontal maps on the left are the isomorphisms (\ref{eq:5}) and (\ref{sec:s1-manifolds-not-5}).
The horizontal maps on the right are induced by two-fold coverings of the respective groups.

The vertical maps on the right are as follows:

The map \(f_1\) maps a pair \((X,Y)\)  to \((X,X\otimes Y)\), where \(X\) is a line bundle over some manifold \(M\) and \(Y\) is an \(j_1\)-dimensional complex vector bundle over \(M\).

The  map \(f_2\) is the map which forgets the complex structure on \(Y\).
The kernel of this map can be described as follows.

There are natural actions of the Weyl groups \(W(U(j_1))\) and
\(W(SO(2j_1))\) on \(\Omega_*(BT^{j_1})\), where
\(S^1\times\dots\times S^1=T^{j_1}\subset U(j_1)\subset SO(2j_1)\) is
a maximal torus.
Note that \(W(SO(2j_1))\) can be identified with a semi-direct product
\(\mathbb{Z}_2^{j_1-1}\rtimes S_{j_1}\).
Moreover, \(W(U(j_1))\) can be identified with the permutation subgroup \(S_{j_1}\).
 An element of
\(\mathbb{Z}_2^{j_1-1}\) acts on \(H^2(BT)=H^2(BS^1)\oplus\dots\oplus
H^2(BS^1)\) via multiplication by \(-1\) on an even number of
summands.
An element of \(S_{j_1}\) acts by permuting the summands.
It follows from an inspection of the relevant Atiyah--Hirzebruch spectral sequences that the kernels of the natural surjective maps
\[\Omega_*[\frac{1}{2}]\left(B(S^1\times_{\mathbb{Z}_{\frac{m}{2}}} T^{j_1})\right)\rightarrow\Omega_*[\frac{1}{2}]\left(B(S^1\times_{\mathbb{Z}_{\frac{m}{2}}} U(j_1))\right)\]
and
\[\Omega_*[\frac{1}{2}]\left(B(S^1\times_{\mathbb{Z}_{\frac{m}{2}}} T^{j_1})\right)\rightarrow\Omega_*[\frac{1}{2}]\left(B(S^1\times_{\mathbb{Z}_{\frac{m}{2}}} SO(2j_1))\right)\]
are generated by elements of the form \((X,Z-\gamma Z)\) with \(\gamma\in W(U(j_1))\) or \(\gamma\in W(SO(2j_1))\), respectively.
Therefore the kernel of the map
\begin{equation*}
  \Omega_*[\frac{1}{2}]\left(B(S^1\times_{\mathbb{Z}_{\frac{m}{2}}} U(j_1))\right)\rightarrow\Omega_*[\frac{1}{2}]\left(B(S^1\times_{\mathbb{Z}_{\frac{m}{2}}} SO(2j_1))\right)
\end{equation*}
is generated by elements of the form \((X,Z-\gamma Z)\) with \(\gamma\in W(SO(2j_1))/W(U(j_1))=\mathbb{Z}_2^{j_1-1}\).

The third map \(f_3\) is induced by the isomorphism \(S^1\times_{\mathbb{Z}_{\frac{m}{2}}} SO(2j_1)\rightarrow S^1/\mathbb{Z}_{\frac{m}{2}} \times SO(2j_1)\).
This isomorphism exists since \(\mathbb{Z}_{\frac{m}{2}}\) acts trivially on \(SO(2j_1)\).

Since the horizontal maps on the right are induced by two-fold coverings, it follows that, if \(j_1>1\), the kernel of the composition of the vertical maps in the middle is generated by bundles of the form 
\begin{equation*}
  (X,Z-(Z_1\oplus \bar{Z}_2\otimes \bar{X})),
\end{equation*}
where \(X\) and \(Z\) are complex vector bundles over the same base manifold with dimension one and \(j_1\) respectively.  Moreover, we have \(Z=Z_1\oplus Z_2\) with \(\gamma Z_1=Z_1\) and \(\gamma Z_2=\bar{Z}_2\), where \(\gamma\) is an element of \(W(SO(2j_1))/W(U(j_1))\)  and \(\bar{X}\), \(\bar{Z}_2\) denote the conjugated bundles of \(X\) and \(Z_2\), respectively. 

If \(j_1=1\) then this map is injective. Therefore in the following we will assume that \(j_1>1\).
In this case a bundle as above is the image under \(\nu_i\) of 
\begin{equation*}
  \widetilde{\mathbb{C}TP}(Y\oplus Z_1\oplus \C;\;X\oplus Z_2 \oplus \C),
\end{equation*}
where \(Z=Z_1\oplus Z_2\), X is a complex line bundle and \(Y\) is a complex vector bundle induced by the projection \(BS^1\times BU(j_1)\times \prod_{i>1}BU(j_i)\rightarrow \prod_{i>1} BU(j_i)\).
Note that, since \(W(SO(2j_1))\) acts on \(H^2(BT)\) in the
way described above, \(Z_2\) is always even-dimensional.
Therefore, the above manifold is Spin if the involved bundles \(X,Y,Z_1,Z_2\) are Spin bundles and the base space is a Spin manifold.
This is the case for our generators of \(\Omega^{\text{Spin},S^1}_*[\sigma_i][\frac{1}{2}]\) considered in Section~\ref{sec:Spin_case}. 
Hence, Proposition~\ref{sec:s1-manifolds-not} is proved.
\end{proof}

\begin{lemma}
\label{sec:s1-manifolds-not-3}
  For \(\mathcal{F}=\mathcal{AE}\) or \(\mathcal{F}=\mathcal{AE}-\{[S^1,W];\dim_\C W=1\}\), the natural map
\[\hat{\Omega}_n^{SO,S^1}[\frac{1}{2}][\mathcal{F}]\rightarrow \Omega_n^{SO,S^1}[\frac{1}{2}][\mathcal{F}]\]
is an isomorphism.
\end{lemma}
\begin{proof}
Let \(\mathcal{G}_i\) be the same families of real slice types as in the proof of Proposition~\ref{sec:s1-manifolds-not}.
We will show by induction on \(i\) that the lemma is true if one replaces \(\mathcal{F}\) by \(\mathcal{G}_i\).
The lemma then follows because there are only finitely many slice types in a compact \(S^1\)-manifold.

We have a commutative diagram with exact rows as follows:
\begin{equation*}
  \xymatrix{\dots\ar[r]&
    \hat{\Omega}^{SO,S^1}_*[\frac{1}{2}][\mathcal{G}_{i-1}]\ar[r]\ar[d]&
    \hat{\Omega}^{SO,S^1}_*[\frac{1}{2}][\mathcal{G}_{i}]\ar[r]\ar[d]&
    \hat{\Omega}^{SO,S^1}_*[\frac{1}{2}][\rho_{i}]\ar[r]\ar^f[d]&\dots\\
\dots\ar[r]&
    \Omega^{SO,S^1}_*[\frac{1}{2}][\mathcal{G}_{i-1}]\ar[r]&
    \Omega^{SO,S^1}_*[\frac{1}{2}][\mathcal{G}_i]\ar[r]&
\Omega^{SO,S^1}_*[\frac{1}{2}][\rho_i]\ar[r]&\dots
}
\end{equation*}
Here \(\Omega^{SO,S^1}_*[\frac{1}{2}][\rho_i]\) denotes the bundle bordism group of \(S^1\)-vector bundles of type \(\rho_i\) with a prescribed orientation on the total space. But we do not have an orientability condition on the base.
The vertical maps here are induced by forgetting that the singular strata are orientable.

By the induction hypothesis and the five-lemma, it suffices to show that the map \(f\) is an isomorphism.
Surjectivity of \(f\) is essentially Lemma 3 in Ossa's \cite{MR0263093}. Therefore, we only have to show that \(f\) is injective.
To do so we slightly modify Ossa's argument.
Let \(E\rightarrow M\) be an \(S^1\)-vector bundle of type \(\rho_i\) with oriented total space and orientable base space.
We will show that if \(E\) bounds an \(S^1\)-vector bundle \(F\rightarrow W\) of type \(\rho_i\) with oriented total space and not necessarily orientable base space \(W\), then there is an \(S^1\)-vector bundle \(F'\rightarrow W'\) of type \(\rho_i\) with oriented total space and orientable base space such that \(2E\) bounds \(W'\).

To do so we can assume that \(W\) is non-orientable. Then the isotropy group of \(\rho_i\) is necessarily finite of even order. Therefore \(W/S^1\) is a non-orientable manifold with orientable boundary.

Hence, we can find a closed codimension-one manifold \(V\subset W\) such that
\begin{enumerate}
\item The \(\mathbb{Z}_2\)-Poincare dual of \(V\) is given by \(w_1(TW)\).
\item \(V\) is contained in the interior of \(W\).
\item \(V\) is \(S^1\)-invariant, because \(TW\) is stably isomorphic to the pullback of \(T(W/S^1)\) along the orbit map.
\end{enumerate}

Then \(W-V\) is orientable.
Denote by \(N(V,W)\) the normal bundle of \(V\) in \(W\) and by \(U\) a tubular neighborhood of \(V\) in \(W\).
Then \(F|_U\) is equivariantly diffeomorphic to \(F|_V\oplus N(V,W)\).
Therefore, the antipodal map on \(N(V,W)\) induces a orientation reversing involution \(T\) on \(F|_U\)  which is actually a bundle map.

Now let \(W'=(W-U)\cup_{T|_{\partial U}}(W-U)\)
and \(F'=(F|_{W-U})\cup_{T|_{F|_{\partial U}}} (F|_{W-U})\).
Then \(2E\) bounds \(F'\) and \(W'\) is orientable.
\end{proof}

\begin{remark}
\label{sec:s1-manifolds-not-2}
  If, in the situation of the proof of the above lemma, we define
\[X=F\times[0,1]\cup_{(T\times \id_{[0,1]})|_{F|_{U}\times \{0\}}}F\times [0,1],\]
then \(X\) is a \(S^1\)-vector bundle of type \(\rho_i\) over an \(S^1\)-manifold with boundary and
\[\partial X = F' \cup ((E\amalg E)\times [0,1]) \cup (F\amalg F).\]
\end{remark}

For later use we also state the following lemma here.

\begin{lemma}
\label{sec:s1-manifolds-not-4}
  Let \(M,N\) be oriented \(S^1\)-manifolds, such that all singular strata in \(N\) are orientable.
  Let also \(W\) be an oriented \(S^1\)-bordism between \(M\) and \(N\).
  Then for some \(0\leq l \leq (\dim M-1)/2\) there is an oriented \(S^1\)-bordism \(W'\) between the disjoint union of \(2^l\) copies of \(M\) and the disjoint union of \(2^l\) copies of \(N\), such that all singular strata in \(W'\), which do not meet the copies of \(M\), are orientable.
\end{lemma}
\begin{proof}
  We prove the following claim by induction on \(k\).

{\bf Claim:} For every \(k\in \mathbb{N}\) there is an oriented \(S^1\)-bordism \(W_k\) between the disjoint union of \(2^k\) copies of \(M\) and the disjoint union of \(2^k\) copies of \(N\) such that all singular strata of \(W_k\) of dimension less than or equal to \(2k+1\) which do not meet the copies of \(M\) are orientable.

The lemma follows from this claim in the case \(k=[(\dim M-1)/2]\).

The claim is true for \(k=0\) because all manifolds of dimension less than two are orientable.
Therefore, assume that \(W_k\) as in the claim is given. We will show the existence of \(W_{k+1}\).

Let \(V\subset W_k\) be a non-orientable closed singular stratum of dimension \(2k+2\) or \(2k+3\). Note that, by orientability of \(W_k\), the singular strata of \(W_k\) have even codimension. Hence, only one of these two cases can appear.
Moreover, assume that \(V\) does not meet \(M\).

We have to distinguish between two cases:
\begin{enumerate}
\item \(V\) is a minimal stratum in \(W_k\).
\item \(V\) is not a minimal stratum in \(W_k\). In this case all the strata contained properly in \(V\) are orientable by the induction hypothesis.
\end{enumerate}

In the first case, let \(N(V,W)\) be the normal bundle of \(V\) in \(W\).
Because \(V\cap \partial W\) is orientable, we can use the argument from the proof of Lemma~\ref{sec:s1-manifolds-not-3} and Remark~\ref{sec:s1-manifolds-not-2} to find a bundle bordism \(X\) between \(N(V,W)\amalg N(V,W)\) and a \(S^1\)-vector bundle \(F\) with orientable base which induces the trivial bordism on boundaries.
Let \(\tilde{X}\) be the induced bordism between the associated sphere bundles.
Denote by \(D_F\) the disc bundle associated to \(F\) and by \(U\) an open tubular neighborhood of \(V\) in \(W\).

Then 
\begin{equation*}
  W_{k+1}=((W_k-U)\amalg (W_k-U))\cup \tilde{X}\cup D_F
\end{equation*}
is the required bordism in the case that \(V\) is a minimal stratum.

If \(V\) is not a minimal stratum, then we can cut out all the singular strata which are contained in \(V\) from \(W_k\) to get a bordism \(\tilde{W}_k\) between \(2^k\) disjoint copies of \(M\) and a manifold \(N'\) all of whose singular strata are orientable, such that \(V\) is a minimal stratum.
Then we can apply the construction of the previous case to \(\tilde{W}_k\) to get a bordism between \(2^{k+1}\) disjoint copies of \(M\) and \(2\) copies of \(N'\).
By gluing back the singular strata which we removed in the first step, we get a bordism between \(2^{k+1}\) disjoint copies of \(M\) and \(2^{k+1}\) disjoint copies of \(N\). 
\end{proof}

\section{Resolving singularities}
\label{sec:resolv}

Now we turn to the construction of invariant metrics of positive scalar curvature on \(S^1\)-manifolds.
In this and the next two sections we prepare the proof of Theorems~\ref{cha:discussion},~\ref{sec:introduction-1} and~\ref{sec:introduction-2} which will be carried out in Section~\ref{sec:atiyah_hirzebruch}.

In this section we discuss a general construction for invariant metrics of positive scalar curvature on certain \(S^1\)-manifolds.
These \(S^1\)-manifolds are not semi-free and have \(S^1\)-fixed points.
The construction is a generalization of a construction from \cite{MR2376283}, where it was done for fixed point free \(S^1\)-manifolds.

For this construction we need the following technical 

\begin{definition}[{cf. \cite[Definition 21]{MR2376283}}]
  Let \(M\) be a manifold (possibly with boundary) with an action of a torus \(T\) and a \(T\)-invariant Riemannian metric \(g\).
  \begin{enumerate}
  \item Assume \(T=S^1\) and \(M^{S^1}=\emptyset\). We say that \(g\) is \emph{scaled} if the vector field generated by the \(S^1\)-action is of constant length.
  \item  Let \(H\subset T\) be a closed subgroup and \(F\) a component of \(M^H\)  of codimension two in \(M\).
 We call \(g\) \emph{normally symmetric in codimension two at \(F\)} if the following holds:
    There is an \(T\)-invariant tubular neighborhood \(N_F\subset M\) of \(F\) together with an isometric \(S^1\)-action \(\sigma_F\) on \(N_F\) which commutes with the \(T\)-action and has fixed point  set \(F\).

We call a metric \(g\) \emph{normally symmetric in codimension two} if it is normally symmetric in codimension-two at all closed codimension two singular strata.
  \end{enumerate} 
\end{definition}

The next lemma is a generalization of Lemma 23 of \cite{MR2376283} to \(S^1\)-manifolds with fixed points.

\begin{lemma}
\label{sec:resolv-sing}
  Let \(M\) be a compact \(S^1\)-manifold, \(\dim M \geq 3\), such that \(\codim M^{S^1}\geq 4\), and \(V\) a small tubular neighborhood of \(M^{S^1}\).
  If \(M\) admits an invariant metric \(g\) of positive scalar curvature, then there is another metric \(\tilde{g}\) of positive scalar curvature on \(M\) such that  \(\tilde{g}\) is scaled on \(M-V\).
  If \(g\) is normally symmetric in codimension two at some codimension two singular stratum of \(M-V\), then the same can be assumed for \(\tilde{g}\).
\end{lemma}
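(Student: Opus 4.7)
The plan is to invoke Hanke's original Lemma~23 in \cite{MR2376283}, which handles the corresponding statement for $S^1$-manifolds without fixed points, on the fixed-point-free open submanifold $M \setminus \overline{V'}$ for some $V' \Subset V$, and then to interpolate the resulting modification back to the untouched metric $g$ across a collar inside $V$. Since $\codim M^{S^1} \geq 4$, the fundamental vector field $K$ of the $S^1$-action is nowhere zero on $M \setminus M^{S^1}$ and of $g$-length bounded below on $M \setminus V'$, which is precisely the hypothesis making the quoted lemma applicable. The substantive task will be to carry out the gluing $S^1$-invariantly while preserving positive scalar curvature and, where relevant, any normal codimension-two symmetries of $g$ on $M \setminus V$.

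Concretely, I would choose an $S^1$-invariant smooth cutoff $\chi : M \to [0,1]$ that vanishes on $V'$ and equals $1$ on $M \setminus V$, let $\theta = g(K,\cdot)$ on $M \setminus M^{S^1}$, and for a small parameter $c > 0$ define
\[
g_c \;=\; g + \chi \left(\frac{c^{2}}{|K|_g^{2}} - 1\right) \frac{\theta \otimes \theta}{|K|_g^{2}}.
\]
Because $\chi$ vanishes near $M^{S^1}$, the apparent singularity at $|K|_g = 0$ is harmless, and $g_c$ extends smoothly to $M$ with $g_c = g$ on $V'$. A direct check gives $g_c(K,K) = c^{2}$ wherever $\chi \equiv 1$, so $g_c$ is scaled on $M \setminus V$. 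Since $K$, $|K|_g$, $\theta$ and $\chi$ are all canonically determined by $g$ and the $S^1$-action, any isometric action commuting with the given one automatically preserves $g_c$; the normal-symmetry assertion therefore follows at once from the construction.

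The main obstacle is to verify positivity of $\scal(g_c)$, and this is where Hanke's estimates have to be adapted. On $V'$ there is nothing to check. On $M \setminus V$ the metric $g_c$ has totally geodesic $S^1$-orbits of constant length $c$, so $M \setminus V \to (M \setminus V)/S^1$ is a Riemannian submersion; O'Neill's formula yields a schematic identity of the form
\[
\scal(g_c) \;=\; \scal(g_\perp) - c^{2}|A|^{2} + O(c^{4}),
\]
where $g_\perp$ is the induced orbit-space metric and $A$ is the integrability tensor, so positivity on the compact set $M \setminus V$ persists for sufficiently small $c > 0$ by a limit argument as in \cite{MR2376283}. On the transition collar $V \setminus V'$, where $|K|_g$ is bounded away from zero, I would follow Hanke's two-stage deformation—first rescaling the conformal factor along $K$ while keeping the horizontal metric fixed, and only afterwards smoothing the junction with $\chi$—so that the scalar-curvature perturbation is controlled by $\chi$, $d\chi$, $\nabla^{2}\chi$ and $c$, and positivity persists for $c$ small enough. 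The preservation of normal symmetries throughout this two-step process is again automatic, since at each stage the modification is built only from $g$-canonical and $S^1$-invariant objects.
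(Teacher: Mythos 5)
The high-level strategy is right, but the explicit formula you propose is missing the key ingredient that makes the positivity argument work, namely the B\'erard Bergery conformal rescaling of the \emph{horizontal} metric. Your $g_c$ leaves $g|_{\mathcal{H}}$ untouched and only shrinks the vertical direction, so where $\chi\equiv 1$ the metric $g_c$ is a Riemannian submersion over the quotient metric $g_\perp$ induced by $g|_{\mathcal{H}}$, and the schematic O'Neill identity you invoke requires $\scal(g_\perp)>0$. That is not a consequence of $\scal(g)>0$: for a generic invariant metric the $S^1$-orbits are not totally geodesic, and the extra terms in O'Neill's formula (second fundamental form, mean curvature vector and its divergence) have no sign. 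This is precisely why the paper's proof replaces the horizontal part by $f^{2/(\dim M-2)}\cdot g_2|_{\mathcal H}$, with $f=\|X\|$; by B\'erard Bergery's Theorem C this conformal factor compensates the mean-curvature terms and yields a quotient metric of positive scalar curvature, after which shrinking the fiber to constant length $\epsilon$ really does keep the total space positively curved. Without that factor, the limit-argument step in your proof fails.

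The second problem is the gluing. Your interpolation with a cutoff $\chi$ introduces terms involving $d\chi$ and $\nabla^2\chi$ multiplied by the coefficient $\bigl(c^2/|K|_g^2 - 1\bigr)$, which does not go to zero as $c\to 0$; so the negative scalar-curvature contribution on the collar $V\setminus V'$ does not get absorbed simply by taking $c$ small. The paper avoids a cutoff entirely: it first uses the Gromov--Lawson deformation (Theorem $2'$ of \cite{MR962295}) to make $g$ a product of a scaled connection metric near $\partial V'$, applies the conformal rescaling on $M\setminus V'$, shows that the boundary restriction of the rescaled metric is \emph{isotopic} through positive scalar curvature metrics to the scaled connection metric $h_2$ (by varying $\epsilon$ and an overall rescaling), and finally uses Lemma~3 of \cite{MR577131} to promote that isotopy to a concordance and glue. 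You would need to replace your cutoff-function interpolation by this isotopy-to-concordance mechanism. Your observation that the construction is canonical in $g$ and hence compatible with any commuting isometric $S^1$-actions is correct and is the same reasoning the paper uses for the normally-symmetric assertion.
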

\begin{proof}
  Let \(V'\subset V\) be a slightly smaller tubular neighborhood of \(M^{S^1}\) in \(M\).
  By an equivariant version of Theorem 2' of \cite{MR962295} , there is a metric \(g_1\) of positive scalar curvature on \(M-V'\), such that in a collar of \(SM^{S^1}=\partial V'\), \(g_1\) is of the form \(h_1 + ds^2\), where \(h_1\) is the restriction of a metric of the form \(g|_{N(M^{S^1},M)} +_H h\) to the normal sphere bundle of \(M^{S^1}\) of sufficiently small radius \(\delta\), where \(h\) is the metric induced by \(g\) on \(M^{S^1}\) and \(H\) is the normal connection.
  Here \(ds^2\) denotes the standard metric on \([0,1]\).
  Moreover, \(g_1\) coincides with \(g\) on the complement of \(V\).

  We have \(N(M^{S^1},M)=\bigoplus_{i> 0} E_i\), where \(E_i\) is a complex \(S^1\)-bundle such that each \(z\in S^1\) acts by multiplication with \(z^i\) on \(E_i\).
  Since \(g\) is \(S^1\)-invariant, the \(E_i\) are orthogonal to each other with respect to \(g|_{N(M^{S^1},M)}\).
  Using O'Neill's formula for the fibration \(S_\delta \hookrightarrow SM^{S^1} \rightarrow M^{S^1}\), one sees that there is a \(\delta'>0\) such that \(h_1\) is isotopic via \(S^1\)-invariant metrics of positive scalar curvature to the restriction \(h_2\) of \(\delta'(\sum_{i>0} \frac{1}{i^2} g|_{E_i}) +_H h\) to \(SM^{S^1}\). (The isotopy is given by convex combination of the metrics \(\sum_{i>0} \frac{1}{i^2} g|_{E_i} +_H h\) and \(g|_{N(M^{S^1},M)} +_H h\) and rescaling the fibers.)
  Note that \(h_2\) is scaled.
 
  Therefore by Lemma 3 of \cite{MR577131}, there is a invariant metric \(g_2\) of positive scalar curvature on \(M-V'\) such that \(g_2\) restricted to a collar of \(\partial V'\) is of the form \(h_2 + ds^2\) and \(g_2\) restricted to \(M-V\) is equal to \(g\).
  
  Now let \(X:M-V'\rightarrow T(M-V')\) be the vector field generated by the \(S^1\)-action.
  Because there are no fixed points in \(M-V'\), \(X\) is nowhere zero.
  Denote by \(\mathcal{V}\) the one-dimensional subbundle of \(T(M-V')\) generated by \(X\) and \(\mathcal{H}\) its orthogonal complement with respect to \(g_2\).
  For \(p\in M-V'\), define \(f(p)=\|X(p)\|_{g_2}\).
  Note that \(f\) is constant in a small neighborhood of \(\partial V'\).

  Next we describe some local Riemannian submersions which are useful to show that our scaled metrics have positive scalar curvature.
  Let \(S^1\times_H D(W)\subset M-V'\) be a tube around an orbit in \(M-V'\).
  We pull back the metric \(g_2\) via the covering \(S^1\times D(W)\rightarrow S^1\times_H D(W)\).
  This yields a metric which is invariant under the free circle action on the first factor.
  Let \(g_2'\) be the induced quotient metric on \(D(W)\).
  Then the argument from the proof of Theorem C of \cite{berard83:_scalar}, shows that the metric
  \begin{equation*}
    f^{\frac{2}{\dim M -2}} \cdot g_2'
  \end{equation*}
  on \(D(W)\) has positive scalar curvature.

  Now let \(dt^2\) be the metric on \(\mathcal{V}\) for which \(X\) has constant length one.
  By O'Neill's formula applied in the above local fibration, the scalar curvature of the metric
  \begin{equation*}
    g_{\epsilon,3}=(\epsilon^2 \cdot dt^2) + (f^{\frac{2}{\dim M -2}} \cdot g_2|_{\mathcal{H}})
  \end{equation*}
  on \(M-V'\) is given by
  \begin{equation}
\label{eq:2}
    \scal_{g_{\epsilon,3}}= \scal_{f^{\frac{2}{\dim M -2}}g_2'} - \epsilon^2 \|A\|_{g_{1,3}},
  \end{equation}
  where \(A\) is the \(A\)-tensor for the connection induced by \(g_{1,3}\) in the fibration \(S^1\times D(W)\rightarrow D(W)\).
Since \(M\) is compact it follows that there is an \(\epsilon_0>0\) such that for all \(\epsilon_0>\epsilon>0\) the metric \(g_{\epsilon,3}\) has positive scalar curvature.

  Moreover, since the restriction of \(g_2\) to a collar of \(\partial V'\) was a product metric and \(f\) is constant in this neighborhood \(g_{\epsilon,3}\) is also a product metric on this collar.

Next we show that \(g_{\epsilon,3}|_{\partial V}\) and \(h_2\) are isotopic via invariant metrics of positive scalar curvature.
We have
\begin{align*}
    g_{\epsilon,3}|_{\partial V'}&=(\epsilon^2 \cdot dt^2) + (f^{\frac{2}{\dim M -2}} \cdot g_2|_{\mathcal{H}\cap T\partial V'})\\
    h_2&=f^{-\frac{2}{\dim M -2}}((f^2 f^{\frac{2}{\dim M -2}} \cdot dt^2) + (f^{\frac{2}{\dim M -2}} \cdot g_2|_{\mathcal{H}\cap T\partial V'})),
\end{align*}
where \(f>0\) is constant.

In other words, \(h_2\) is equal to the metric \(g_{f\cdot f^{\frac{1}{\dim M -2}},3}\)up to scaling.
Hence, it follows from formula (\ref{eq:2}) that the metrics \(h_2\) and \(g_{\epsilon,3}|_{\partial V}\) are isotopic via invariant metrics of positive scalar curvature because both metrics have positive scalar curvature.
One only has to increase or decrease the parameter \(\epsilon\) and then rescale the metric.
Since \(h_2\) is isotopic to \(h_1\) it follows from Lemma 3 of \cite{MR577131}, that there is an invariant metric of positive scalar curvature on \(M\) whose restriction to \(M-V'\) is \(g_{\epsilon,3}\).

The remark about the normally symmetric metrics of positive scalar curvature can be seen as follows.
Because the local \(S^1\)-actions \(\sigma_F\)  commute with the global \(S^1\)-action, they respect the decomposition \(T(M-V)=\mathcal{H}\oplus \mathcal{V}\).
Therefore the new metric is invariant under these actions.
This completes the proof.
\end{proof}

\begin{remark}
\label{sec:resolv-sing-1}
Note that every \(g_{\epsilon,3}\), \(0<\epsilon<\epsilon_0\), can be extended to an invariant metric of positive scalar curvature on \(M\) and that the restrictions of all these metrics to \(\mathcal{H}\) are the same.
This shows that the metric \(g_{\epsilon,3}\) can be scaled down on \(\mathcal{V}\) without affecting the restriction of the metric to \(\mathcal{H}\) and the fact that the \(g_{\epsilon,3}\) can be extended to a metric of positive scalar curvature on \(M\).
\end{remark}

Next we describe a resolution of singularities for singular strata of codimension two from \cite{MR2376283}.
Let \(M\) be a \(S^1\)-manifold of dimension \(n\geq 3\) and
\begin{equation*}
  \phi: S^1\times_H(S^{n-3}\times D(W))\hookrightarrow M
\end{equation*}
be an \(S^1\)-equivariant embedding where \(H\) is a finite subgroup of \(S^1\) and \(W\) is an one-dimensional unitary effective \(H\)-representation.
Here \(H\) acts trivially on \(S^{n-3}\).

Since \(S(W)/H\) can be identified with \(S^1\), the \(S^1\)-principal bundle
\begin{equation*}
  S^1\hookrightarrow S^1\times_H(S^{n-3}\times S(W))\rightarrow S^{n-3}\times S(W)/H
\end{equation*}
is trivial.
Choose a trivialization
\begin{equation*}
  \chi: S^1\times_H(S^{n-3}\times S(W))\rightarrow S^1\times S^{n-3}\times S(W)/H
\end{equation*}
and consider \(S(W)/H\) as the boundary of \(D^2\).
Then we can glue the free \(S^1\)-manifold \(S^1\times S^{n-3}\times D^2\) to \(M-\image \phi\) to get a new \(S^1\)-manifold \(M'\).
We say that \(M'\) is obtained from \(M\) by resolving the singular stratum \(\phi(S^1\times_H(S^{n-3}\times \{0\}))\).

Now we can state the following generalization of Theorem 25 of \cite{MR2376283}.

\begin{theorem}
\label{sec:resolv-sing-2}
  Let \(M\) be a closed \(S^1\)-manifold of dimension \(n\geq 3\) such that \(\codim M^{S^1}\geq 4\) and \(H\subset S^1\) a finite subgroup.
  Let \(V\) be an invariant tubular neighborhood of \(M^{S^1}\).
  Moreover, let
  \begin{equation*}
    \phi: S^1\times_H(S^{n-3}\times D(W))\hookrightarrow M - V
  \end{equation*}
  be an \(S^1\)-equivariant embedding where \(W\) is a unitary effective \(H\)-representation of dimension one.
  Let \(M'\) be obtained from \(M\) by resolving the singular stratum \(\phi(S^1\times_H(S^{n-3}\times \{0\}))\subset M\).
  If \(M\) admits an invariant metric of positive scalar curvature which is normally symmetric in codimension two at this singular stratum, then also \(M'\) admits an invariant metric of positive scalar curvature.
\end{theorem}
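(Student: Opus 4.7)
The plan is to reduce to the fixed-point free case (Theorem 25 of \cite{MR2376283}) by first preparing the metric so that Hanke's construction can be carried out on a neighborhood of $\image \phi$ without touching $V$.

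First, I would apply Lemma~\ref{sec:resolv-sing} to the given metric $g$. This produces an invariant PSC metric $\tilde g$ on $M$ that is scaled on $M-V$ and, since the normal symmetry in codimension two off $V$ is preserved by the lemma, still normally symmetric in codimension two there. Moreover, the construction in Lemma~\ref{sec:resolv-sing} only modifies $g$ on $M-V'$ for a slightly smaller tubular neighborhood $V'\subset V$ of $M^{S^1}$, so $\tilde g$ equals $g$ on a neighborhood of $M^{S^1}$ and will remain untouched for the rest of the argument.

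Next, because $\image \phi \subset M-V$, the resolution of $\phi(S^1\times_H(S^{n-3}\times\{0\}))$ is a local operation taking place entirely in a region where $\tilde g$ is both scaled and normally symmetric in codimension two. These are precisely the hypotheses of Hanke's construction in the proof of Theorem 25 of \cite{MR2376283}. I would run that construction verbatim on a small invariant tubular neighborhood $U$ of $\image\phi$: the scaled property identifies $U$ (away from its other singular strata) with a Riemannian submersion over $U/S^1$, the normal symmetry supplies an extra $S^1$-action on the $\C$-direction normal to the codimension-two singular stratum, and together these allow one to replace the local $H$-quotient model $S^1\times_H(S^{n-3}\times D(W))$ by the free model $S^1\times S^{n-3}\times D^2$ carrying an invariant PSC metric which agrees, on a collar of the common boundary $S^1\times S^{n-3}\times S^1$, with $\tilde g$. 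Lemma~3 of \cite{MR577131} then lets us interpolate via invariant PSC metrics. Since the modification is supported in $U\subset M-V$, the result is a smooth invariant PSC metric on $M'$ that still coincides with $\tilde g$ on $V$.

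The main technical obstacle is to verify that Hanke's construction, originally formulated for a closed fixed-point free $S^1$-manifold, adapts to our compact subregion with boundary $\partial V$. This is manageable because his construction is entirely local in a tubular neighborhood of $\image \phi$: it uses only the scaled metric and the normal $S^1$-symmetry on this neighborhood, and the compactness needed to choose the small fibre-scaling parameter $\varepsilon$ (as in the proof of Lemma~\ref{sec:resolv-sing}, see formula~(\ref{eq:2})) is provided by the compactness of $U$. The patching back to $\tilde g$ outside $U$ is then unproblematic because both metrics extend the same scaled connection metric on the $S^1$-fibration over the boundary of the modification region.
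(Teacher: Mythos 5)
Your overall strategy coincides with the paper's: use Lemma~\ref{sec:resolv-sing} to produce a scaled, still normally symmetric, PSC metric on $M-V$, and then invoke the argument of Theorem 25 of \cite{MR2376283}. However, there is a genuine gap in the way you handle the last step of Hanke's construction.

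You assert that Hanke's construction is ``entirely local in a tubular neighborhood of $\image\phi$'' and that the fibre-scaling parameter $\varepsilon$ need only be chosen using the compactness of a small neighborhood $U$ of $\image\phi$. This is not so. In the proof of Theorem~25 of \cite{MR2376283} the resolution near the singular stratum is indeed local, but the final down-scaling of the orbit direction is a \emph{global} operation carried out on the entire fixed-point-free $S^1$-manifold: one multiplies the $\mathcal{V}$-direction by a small uniform constant. If one instead shrinks the fibre only inside $U$ and lets the scaling factor return to $1$ near $\partial U$, the scalar curvature acquires contributions from the gradient of the scaling function and positivity is no longer controlled; moreover the resulting metric would not match $\tilde g$ on a collar of $\partial U$ in general, so the asserted ``unproblematic'' patching via Lemma~3 of \cite{MR577131} is not available as stated. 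Thus the scaling must be performed uniformly over all of $M-V$, and one then needs to know that the down-scaled metric on $M-V$ still extends to an invariant PSC metric across $V$. That is exactly the content of Remark~\ref{sec:resolv-sing-1}: every $g_{\epsilon,3}$ with $0<\epsilon<\epsilon_0$ extends to an invariant PSC metric on $M$, and all these extensions agree on $\mathcal{H}$, so the $\mathcal{V}$-direction can be scaled down arbitrarily without destroying extendability. Your proposal never invokes this remark, and without it the patching to $V$ after the global down-scaling is not justified. To repair the argument, replace the ``local'' framing with the correct one: carry out Hanke's resolution near $\image\phi$, perform the final down-scaling of the orbit direction over all of $M-V$, and appeal to Remark~\ref{sec:resolv-sing-1} to re-extend the metric across $V$.
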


Since the proof of Theorem 25 of \cite{MR2376283} is mainly a local argument in a neighborhood of the singular stratum which is resolved with some down-scaling at the end, its proof is also valid in the case where \(M\) has fixed point components of codimension at least four, by Lemma \ref{sec:resolv-sing} and Remark~\ref{sec:resolv-sing-1}.
Therefore we have the above theorem.

Moreover, note that, by the local nature of the construction, if the original metric on \(M\) is normally symmetric in codimension two at some singular stratum \(F\) which is disjoint from the singular stratum which is resolved, then the new metric on \(M'\) is also normally symmetric in codimension two at \(F\).

The next step is the following generalization of Lemma 24 of \cite{MR2376283}:

\begin{lemma}
\label{sec:more-results-1}
Let \(Z\) be a compact orientable \(S^1\)-bordism between \(S^1\)-manifolds \(X\) and \(Y\).
Assume that \(X\) carries an invariant metric of positive scalar curvature which is normally symmetric in codimension \(2\).
If \(Z\) admits a decomposition into special \(S^1\)-handles of codimension at least \(3\), then \(Y\) carries an invariant metric of positive scalar curvature which is normally symmetric in codimension \(2\) at all codimension-two singular strata which are boundaries of orientable components of \(Z^H\) with \(H\subset S^1\) a closed subgroup.
\end{lemma}
\begin{proof}
  First recall that a special \(S^1\)-handle is an \(S^1\)-handle of
  the form
  \begin{equation*}
    S^1\times_H(D^{d}\times D(W)),
  \end{equation*}
where \(H\) is a subgroup of \(S^1\), \(W\) is an orthogonal
\(H\)-representation, \(D(W)\) is the unit disc in \(W\) and \(H\)
acts trivially on the \(d\)-dimensional disc \(D^d\).
Here the codimension of the handle is given by \(\dim W\).

  Therefore as in the proof of Lemma 24 in \cite{MR2376283} we may assume that \(Y\) can be constructed from \(X\) by equivariant surgery on \(S=S^1\times_H (S^{d-1}\times D(W))\).

  First assume that \(d=0\).
  Then we have \(S=\emptyset\).
  Hence the surgery on \(S\) produces a new component \(S^{1}\times_H
  (D^0\times S(W))\) of \(Y\).
  Since \(Z\) is orientable, there is a homomorphism \(\phi: H\rightarrow SO(n)\) which corresponds to the \(H\)-representation \(W\).
  As a subgroup of \(S^1\), \(H\) has a dense cyclic subgroup.
  Therefore \(\phi(H)\) is contained in a maximal torus of \(SO(n)\).
  Hence \(W\) is isomorphic to \(\R^k \oplus \bigoplus_i W_i\), where \(\R^k\) denotes the trivial \(H\)-representation and the \(W_i\) are complex one-dimensional \(H\)-representations.

  If \(K\subset S^1\) is a subgroup such that \(S^{1}\times_H (D^0\times S(W))^K\neq \emptyset\), then \(K\) is contained in \(H\).
  If \(S^K\) has codimension two in \(S\), then there is exactly one \(W_{i_0}\) such that \(K\) acts non-trivially on \(W_{i_0}\).
  Therefore the \(S^1\)-action on \(S^{1}\times_H (D^0\times S(W))\), which is induced by complex multiplication on \(W_{i_0}\) commutes with the original \(S^1\)-action and leaves all connection metrics induced from the round metric on \(S(W)\) invariant.
  Since there are such connection metrics with positive scalar curvature on \(S^{1}\times_H (D^0\times S(W))\) the theorem follows in this case.

  Now assume that \(d\geq1\). Then \(S\) is non-empty. The proof
  proceeds as in Hanke's paper.
  This is done as follows. As in Hanke's paper we may assume that there is a \(K\subset H\) and a component \(F\subset X^K\) of codimension \(2\) with
  \begin{equation*}
    S^1\times_H(S^{d-1}\times \{0\})\subset F.
  \end{equation*}
  Moreover, the extra symmetry \(\sigma\) induces an orthogonal action
  \(\sigma\) of \(S^1\) on the third factor of
  \(S^1\times_H(S^{d-1}\times D(W))\).
  This action extends to an orthogonal action on the third factor of
  the handle \(S^1\times_H(S^{d-1}\times D(W))\).
  
  Indeed, if \(d>1\), then \(S^{d-1}\) is connected and hence the
  action extends.

  If \(d=1\), then \(S^{d-1}=\{\pm 1\}\) has two components.
  And in principle the \(H\times S^1\)-representations \(\sigma_\pm\)  on \(\{\pm
  1\}\times W\) might have different isomorphism types.
  If this happens the \(S^1\)-action on \(S^1\times_H(S^{d-1}\times
  D(W)\) cannot be extended to an action on the handle \(S^1\times_H(D^d\times D(W))\).

Therefore we have to rule out this case if the component of \(Z^K\) which contains \(F\) is orientable.
This is done as follows.
  We have that
  \(F'=(F\times I)\cup S^1\times_H([-1,+1]\times D(W)^K)\subset Z^K\)
  is orientable.
  Therefore the structure group of \(N(F',Z)\) is \(SO(2)=U(1)\) and
  the \(S^1\)-action \(\sigma\) extends to an action which is defined
  on a neighborhood of \(F'\).
  
  Since the actions \(\sigma_\pm\) are restrictions of this action
  their isomorphism types coincide.
  Therefore the actions \(\sigma_\pm\) extend to an orthogonal action
  on \(S^1\times_H([-1,1]\times D(W))\) with fixed point set
  \(S^1\times_H([-1,1]\times D(W)^K)\).

  Now one can construct 
 an invariant metric of positive scalar curvature which is normally
 symmetric in codimension \(2\) (at those codimension two singular strata which are parts of boundaries of orientable strata in \(Z\)) on \(Y\) as in the proof of Lemma 24
 of \cite{MR2376283}.
\end{proof}

Let \(p:B\rightarrow BO\) be a fibration.
A \(B\)-structure on a manifold \(M\) is a lift \(\hat{\nu}: M \rightarrow B\) of the classifying map \(\nu\rightarrow BO\) of the stable normal bundle of \(M\).
We denote by \(\Omega^B_{n,S^1}\) the equivariant bordism group of \(S^1\)-manifolds with \(B\)-structures on maximal strata, i.e. an element of \(\Omega^B_{n,S^1}\) is represented by a pair \((M,\hat{\nu})\), where \(M\) is an \(n\)-dimensional \(S^1\)-manifold and \(\hat{\nu}:M_{\text{max}}\rightarrow B\) is a \(B\)-structure.
Moreover, such a pair represents zero if there is an \(n+1\)-dimensional \(S^1\)-manifold with boundary \(W\) with a \(B\)-structure \(f:W_{\text{max}}\rightarrow B\) on its maximal stratum such that \(\partial W=M\) and \(f|_{M_{\text{max}}}=\hat{\nu}\).

\begin{lemma}
\label{sec:more-results-2}
  Let \(M\) be a connected \(S^1\)-manifold of dimension \(n\geq 6\) with  a \(B\)-structure \(\hat{\nu}:M_{\text{max}}\rightarrow B\) on its maximal stratum such that \(\hat{\nu}\) is a two-equivalence, i.e. \(\hat{\nu}\) induces an isomorphism on \(\pi_1\) and a surjection on \(\pi_2\).
Let \(W\) be a connected equivariant \(B\)-bordism between \(M\) and another \(S^1\)-manifold \(N\).
Then there is a \(B\)-bordism \(W'\) between \(M\) and \(N\) such that \(M_{\mathrm{max}}\hookrightarrow W'_{\mathrm{max}}\) is a two-equivalence and there is a diffeomorphism \(V\rightarrow V'\), where \(V\) and \(V'\) are open neighborhoods of \(\partial W\cup W_{\mathrm{sing}}\) and \(\partial W'\cup W'_{\mathrm{sing}}\), respectively.
\end{lemma}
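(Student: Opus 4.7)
The plan is to construct $W'$ by a sequence of equivariant interior surgeries on $W$, each supported in a compact region of $W_{\text{max}}$ disjoint from some preselected open neighborhood $V$ of $\partial W \cup W_{\text{sing}}$, so that $V$ survives unchanged into $W'$ and the required diffeomorphism $V \to V'$ is just the identity. Because the $S^{1}$-action on $W_{\text{max}}$ is free, each equivariant surgery there descends to an ordinary surgery on the $n$-dimensional orbit space $W_{\text{max}}/S^{1}$, and the $B$-structure can be handled in the quotient; the hypothesis $n \geq 6$ ensures that all general-position arguments needed for surgeries of index $\leq 2$ go through.

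Following the standard trading-of-handles argument in the spirit of Kreck's modified surgery theory, I would successively make $M_{\text{max}} \hookrightarrow W_{\text{max}}$ into a $k$-equivalence for $k=0,1,2$ by killing $\pi_{k}(W_{\text{max}},M_{\text{max}})$. For $k=0$, I use $0$-surgeries along embedded arcs to absorb any components of $W_{\text{max}}$ not already meeting $M_{\text{max}}$, which is possible because $W$ is connected. For $k=1$, each generator of $\pi_{1}(W_{\text{max}},M_{\text{max}})$ is representable by an embedded arc from $M_{\text{max}}$ back to itself, and an equivariant $1$-surgery on the associated $S^{0}$ kills the class. For $k=2$, once the inclusion is $1$-connected, every class in $\pi_{2}(W_{\text{max}},M_{\text{max}})$ is representable by a continuous map $(D^{2},S^{1}) \to (W_{\text{max}},M_{\text{max}})$ which, by general position in dimension $\geq 6$, can be isotoped to an embedding; an equivariant $2$-surgery then removes it. All surgery data are isotoped into the chosen interior region so that $V$ is untouched.

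The main obstacle is to verify that each of these surgeries extends the ambient $B$-structure, so that $W'$ remains a $B$-bordism. The obstruction to extending a $B$-structure across a $(k+1)$-handle attached along a framed $k$-sphere lies in $\pi_{k}$ of the homotopy fiber of $p\colon B \to BO$. The hypothesis that $\hat{\nu}\colon M_{\text{max}} \to B$ is a $2$-equivalence is precisely what makes these obstructions manageable for $k \leq 2$: up to modifying the framing of the surgery sphere by an element of $\pi_{k}(SO)$, one can always lift through $\hat{\nu}$ on $M_{\text{max}}$ and so solve the obstruction. Equivalently, one rephrases the construction in the language of Kreck's normal $k$-smoothings, where the existence of a highly connected $B$-bordism with prescribed ends is essentially built into the formalism. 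Equivariance contributes no extra obstruction, since all data are free $S^{1}$-equivariant on $W_{\text{max}}$ and descend to the quotient, where the ordinary theory applies verbatim.
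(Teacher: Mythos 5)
Your high-level strategy -- equivariant interior surgeries on $W_{\text{max}}$ supported away from a preselected neighborhood $V$ of $\partial W\cup W_{\text{sing}}$, descending to surgeries on the orbit space because the $S^1$-action is free there, and making $M_{\text{max}}\hookrightarrow W'_{\text{max}}$ a two-equivalence while respecting the $B$-structure -- is exactly the paper's approach. But the specific surgeries you describe are the wrong ones. To make $\pi_1(M_{\text{max}})\to\pi_1(W_{\text{max}})$ an isomorphism, the paper performs surgery on embedded \emph{circles} $c\colon S^1\hookrightarrow W_{\text{max}}/S^1$ lying in $\ker\phi$, lifted to equivariant embeddings $c'\colon S^1\times S^1\hookrightarrow W_{\text{max}}$. (Note that injectivity of $\pi_1(M_{\text{max}})\to\pi_1(W_{\text{max}})$ is automatic from the retraction $r=\hat\nu_*^{-1}\circ f_*$, so one only needs to kill $\ker r$.) Your ``equivariant $1$-surgery on the associated $S^0$'' is not that operation: an interior surgery on $S^0\subset W_{\text{max}}$ replaces $S^0\times D^{n+1}$ by $D^1\times S^n$ and, for connected $W_{\text{max}}$, \emph{enlarges} $\pi_1$ by a free generator rather than killing a relative class. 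Similarly, for the $\pi_2$-step one does surgery on embedded $2$-spheres lifted to equivariant $S^1\times S^2\hookrightarrow W_{\text{max}}$; one cannot do surgery directly on your embedded half-disc $(D^2,S^1)\to(W_{\text{max}},M_{\text{max}})$. After the $\pi_1$-step, the identification $\pi_2(W_{\text{max}},M_{\text{max}})\cong\pi_2(W_{\text{max}})/\pi_2(M_{\text{max}})$ lets one work with honest $2$-spheres, and the snake lemma passes this to the orbit space; the paper carries this out explicitly.

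The $B$-structure compatibility is also treated differently. Rather than running the obstruction theory for lifts along $B\to BO$ and adjusting framings, the paper \emph{chooses} each surgery sphere so that its image in $B$ is already nullhomotopic: for $\pi_1$ it precomposes the equivariant lift $c'$ with $(s,t)\mapsto(sc''(t),t)$ so that $c'\circ\iota_2\in\ker(\hat\nu_*^{-1}\circ f_*)$, and for $\pi_2$ it uses the $\pi_2$-surjectivity of $\hat\nu$ to modify $c$ so that $f_*c=0$. Your obstruction-theoretic sketch is in the right spirit, but the precomposition with $c''$ is a genuine technical point about which equivariant lift of $c$ one takes, and it is invisible in your formulation. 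Finally, you do not address termination of the surgery process, which the paper handles via the finite presentability of $\pi_1(W_{\text{max}}/S^1)$ (so that $\ker\phi$ is finitely normally generated) and the finite generation of $\pi_2(W_{\text{max}}/S^1)$ as a $\mathbb{Z}[\pi_1(W_{\text{max}}/S^1)]$-module. The $k=0$ step, by contrast, is vacuous: $W$ is connected by hypothesis and $W_{\text{max}}\subset W$ is open and dense, hence already connected.
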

\begin{proof}
  We first show that surgery on \(W_{\text{max}}\) can be used to construct a \(B\)-bordism \(W_1\) such that the inclusion \(M_{\text{max}}\hookrightarrow W_{1,\text{max}}\) induces an isomorphism on fundamental groups.
  We have the following commutative diagram with exact rows.
  \begin{equation*}
    \xymatrix{ 
      \pi_1(S^1)\ar[d]^\id\ar@/^2pc/[ddd]^\id\ar[r]&\pi_1(M_{\text{max}})\ar[r]\ar[d]\ar@/^3pc/[ddd]^\id&\pi_1(M_{\text{max}}/S^1)\ar[d]\ar@/^3pc/[ddd]^\id\ar[r] & 0\\
      \pi_1(S^1)\ar[r]\ar[dd]^\id&\pi_1(W_{\text{max}})\ar[r]\ar[d]^{f_*}&\pi_1(W_{\text{max}}/S^1)\ar@{.>}[dd]^\phi\ar[r] & 0\\
      &\pi_1(B)\ar[d]^{\hat{\nu}_*^{-1}}\\
      \pi_1(S^1)\ar[r]&\pi_1(M_{\text{max}})\ar[r]&\pi_1(M_{\text{max}}/S^1)\ar[r] & 0
}
  \end{equation*}
Here the dashed map \(\phi\) is induced by the commutativity of the diagram and the universal property of the quotient group.
It follows from an easy diagram chase that \(\ker \phi \cong \ker \hat{\nu}^{-1}_* \circ f_*\).

Let \(c: S^1\rightarrow W_{\text{max}}/S^1\) be an embedding that represents an element of \(\ker \phi\) and does not meet the boundary.
Then there is an equivariant embedding \(c':S^1\times S^1\rightarrow W_{\text{max}}\), where \(S^1\) acts by rotation on the first factor, such that \(c=\pi\circ c'\circ \iota_2\), where \(\iota_2:S^1\rightarrow S^1\times S^1\) is the inclusion of the second factor and \(\pi:W_{\text{max}}\rightarrow W_{\text{max}}/S^1\) is the orbit map.
Since \(c\in \ker \phi\), there is a map \(c'':S^1\rightarrow S^1\) such that \(c'\circ (\iota_2 * (\iota_1\circ c''))\) is contained in the kernel of \(f_*\).
Therefore by precomposing \(c'\) with
\begin{equation*}
  (s,t)\mapsto (s c''(t),t)
\end{equation*}
we may assume that \(c'\circ \iota_2\in \ker \hat{\nu}^{-1}_* \circ f_*\).
Therefore we can do equivariant surgery on \(c'\) to construct a new \(B\)-bordism \(W'\) such that \(\pi_1(W'_{\text{max}}/S^1)=\pi_1(W_{\text{max}}/S^1)/\langle c \rangle\), where \(\langle c \rangle\) denotes the normal subgroup of \(\pi_1(W_{\text{max}}/S^1)\) which is generated by \(c\).

Since \(\pi_1(W_{\text{max}}/S^1)\) and \(\pi_1(M_{\text{max}}/S^1)\) are finitely presentable, it follows that \(\ker \phi\) is finitely generated as a normal subgroup of \(\pi_1(W_{\text{max}}/S^1)\).
Therefore after a finite number of iterations of the above surgery step we may achieve that \(\ker \hat{\nu}^{-1}_* \circ f_* =0\).
This is equivalent to the fact that \(\pi_1(M_{\text{max}})\rightarrow \pi_1(W_{\text{max}})\) is an isomorphism. 

The next step is to make the map \(\pi_2(M_{\text{max}})\rightarrow \pi_2(W_{\text{max}})\) surjective.
We have the following commutative diagram with exact rows.
\begin{equation*}
  \xymatrix{
    0\ar[r]&\pi_2(M_{\text{max}})\ar[r]\ar[d]&\pi_2(M_{\text{max}}/S^1)\ar[r]\ar[d]& \pi_1(S^1)\ar[r]\ar[d]^\cong &\pi_1(M_{\text{max}})\ar[d]^\cong\\
    0\ar[r]&\pi_2(W_{\text{max}})\ar[r]&\pi_2(W_{\text{max}}/S^1)\ar[r]& \pi_1(S^1)\ar[r] &\pi_1(W_{\text{max}})}
\end{equation*}

It follows from an application of the snake lemma that there is an isomorphism
\begin{equation*}
  \pi_2(W_{\text{max}})/\pi_2(M_{\text{max}})\rightarrow \pi_2(W_{\text{max}}/S^1)/\pi_2(M_{\text{max}}/S^1).
\end{equation*}

Let \(c: S^2\rightarrow W_{\text{max}}\) be a representative of a class in \(\pi_2(W_\text{max})/\pi_2(M_\text{max})\) such that \(\pi\circ c\) is an embedding.
Then there is an equivariant embedding \(c':S^1\times S^2\hookrightarrow W_\text{max}\) such that \(c'\circ \iota_2=c\).
Since \(\hat{\nu}_*: \pi_2(M_\text{max})\rightarrow \pi_2(B)\) is surjective we may assume that \(f_*c=0\).
Therefore we can do equivariant surgery on \(c'\) to construct a new \(B\)-bordism \(W'\) such that
\begin{equation*}
  \pi_2(W'_\text{max}/S^1)=\pi_2(W_\text{max}/S^1)/\langle \pi_* c\rangle,
\end{equation*}
where \(\langle \pi_* c \rangle \) denotes the \(\mathbb{Z}[\pi_1(W_\text{max}/S^1)]\)-submodule of \(\pi_2(W_\text{max}/S^1)\) which is generated by \(\pi_*c\).

Since \(\pi_2(W_\text{max})/S^1\) is a finitely generated \(\mathbb{Z}[\pi_1(W_\text{max}/S^1)]\)-module we get after a finite number of repetitions of this step a bordism \(W'\)  for which 
$$\pi_2(W'_{\text{max}})/\pi_2(M_{\text{max}})=0.$$
This completes the proof.
\end{proof}

\begin{remark}
\label{sec:more-results-4}
  If in the situation of the above lemma \(M_{\text{max}}\) is Spin, then \(p:B=B\pi_1(M_\text{max})\times BSpin\rightarrow BO\) and \(\hat{\nu}=f\times s: M_{\text{max}}\rightarrow B\), where \(p\) is the composition of the projection to the second factor with the natural fibration \(BSpin\rightarrow BO\), \(f\) is the classifying map of the universal covering of \(M_{\text{max}}\) and \(s\) is a Spin-structure on \(M_\text{max}\), satisfies the assumptions on \(B\).

  If \(M_{\text{max}}\) is orientable, not Spin with universal covering not Spin, then \(p:B=B\pi_1(M_\text{max})\times BSO\rightarrow BO\) and  \(\hat{\nu}=f\times s: M_{\text{max}}\rightarrow B\), where \(p\) is the composition of the projection to the second factor with the natural fibration \(BSO\rightarrow BO\),   \(f\) is the classifying map of the universal covering of \(M_{\text{max}}\) and \(s\) is an orientation on \(M_\text{max}\), satisfies the assumptions on \(B\).

  If \(M_{\text{max}}\) is orientable, not Spin with universal covering Spin, then it follows that \(w_2(M_{\text{max}})=f^*(\beta)\) for some \(\beta \in H^2(B\pi_1(M_\text{max};\mathbb{Z}/2\mathbb{Z}))\). Let \(Y(\pi_1(M_\text{max}), \beta)\) be the pullback of \(\beta: B\pi_1(M_\text{max})\rightarrow B\mathbb{Z}/2\mathbb{Z}\) and \(w_2: BSO\rightarrow B\mathbb{Z}/2\mathbb{Z}\). Then \(f\times s:M_\text{max}\rightarrow B\pi_1(M_\text{max})\times BSO\) as in the previous case lifts to a map \(\hat{\nu}:M_\text{max}\rightarrow Y(\pi_1(M_\text{max}), \beta)\). If we let \(p: B= Y(\pi_1(M_\text{max}), \beta)\rightarrow BO\) be the composition of the natural fibrations \(Y(\pi_1(M_\text{max}), \beta)\rightarrow BSO\) and \(BSO\rightarrow BO\), then \(\hat{\nu}\) and \(B\) satisfy the assumptions from the above lemma.

For more details see \cite{MR1268192}.
\end{remark}

Now we can prove the following generalization of Theorem 34 of \cite{MR2376283}.

\begin{theorem}
\label{sec:more-results}
  Let \(Z\) be a compact connected oriented \(S^1\)-bordism between closed \(S^1\)-manifolds \(X\) and \(Y\).
  Assume that for all subgroups \(H\subset S^1\) all components of codimension two of \(Z^H\) which do not meet \(Y\) are orientable and that the following holds
  \begin{enumerate}
  \item \(\dim Z/S^1\geq 6\),
  \item \(\codim Z^{S^1} \geq 4\),
  \item There is a \(B\)-structure on \(Z_\text{max}\), whose restriction to \(Y_\text{max}\) induces a two-equivalence \(Y_\text{max}\rightarrow B\).
  \end{enumerate}
  Then, if \(X\) admits an \(S^1\)-invariant metric of positive scalar curvature which is normally symmetric in codimension \(2\), then \(Y\) admits an \(S^1\)-invariant metric of positive scalar curvature.
\end{theorem}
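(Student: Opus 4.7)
The plan is to adapt Hanke's proof of Theorem~34 in \cite{MR2376283} to the setting where $S^1$-fixed points are present. First, Lemma~\ref{sec:more-results-2}, applied with $Y$ in the role of $M$, allows me to replace $Z$ by an equivariant $B$-bordism in which $Y_{\text{max}}\hookrightarrow Z_{\text{max}}$ is a two-equivalence; the lemma preserves a neighbourhood of $\partial Z\cup Z_{\text{sing}}$, so the given psc metric on $X$ and the singular stratification of $Z$ remain untouched. Reversing the bordism, equivariant handle-cancellation arguments on the orbit space of the maximal stratum (using the 2-equivalence condition) produce a decomposition of $Z$ built from a collar of $X$ by equivariant handles whose maximal-stratum cores have codimension at least three, together with additional pieces that build up those singular strata of $Z$ which are not already present in $X$.

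Before attaching handles, I apply Lemma~\ref{sec:resolv-sing} on $X$ to deform the given invariant psc metric, outside a tubular neighbourhood of $X^{S^1}$, to one which is scaled while remaining normally symmetric in codimension two. The scaling is what makes the O'Neill-type computation underlying Lemma~\ref{sec:more-results-1} applicable in the presence of fixed points. I then attach the maximal-stratum handles in order of increasing index: by Lemma~\ref{sec:more-results-1} each special $S^1$-handle of codimension at least three extends the invariant psc metric in a way that remains normally symmetric in codimension two. Handles which would introduce new codimension-two singular strata in the interior of $Z$ are treated indirectly: I first attach the corresponding free $S^1$-handle in a resolution, extend the metric using Lemma~\ref{sec:more-results-1}, and then reintroduce the codim-two stratum via Theorem~\ref{sec:resolv-sing-2}, which preserves both positive scalar curvature and normal symmetry in codimension two. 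The orientability hypothesis on the codim-two components of $Z^H$ enters exactly here, as it does in the proof of Lemma~\ref{sec:more-results-1}.

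The fixed-point components of $Z$ that do not meet $X$ produce, near the $Y$-end, precisely the tubular neighbourhood of $Y^{S^1}$ inside which the theorem no longer demands normal symmetry; the metric can be filled in there using the standard connection-metric construction of B\'erard Bergery \cite{berard83:_scalar} on the normal disc bundles, positive scalar curvature being ensured by the hypothesis $\codim Z^{S^1}\geq 4$. I expect the main obstacle to be a bookkeeping one: organising the order in which handles are attached and codim-two resolutions are performed so that at every intermediate stage the metric is simultaneously scaled away from the current fixed-point set, normally symmetric in codimension two elsewhere, and of positive scalar curvature. Ensuring compatibility between the neighbourhoods that Lemmas~\ref{sec:resolv-sing}, \ref{sec:more-results-1}, \ref{sec:more-results-2} and Theorem~\ref{sec:resolv-sing-2} each promise to preserve is the principal technical burden of the proof.
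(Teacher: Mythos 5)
Your proposal assembles the right ingredients (Lemmas~\ref{sec:resolv-sing}, \ref{sec:more-results-1}, \ref{sec:more-results-2}, Theorem~\ref{sec:resolv-sing-2}) but uses one of them in the wrong direction, and it omits a preliminary modification of the bordism that the paper needs precisely to avoid that mistake.

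Theorem~\ref{sec:resolv-sing-2} is a one-way street: if $M$ has a codimension-two singular stratum and carries an invariant normally symmetric psc metric, then the \emph{resolution} $M'$ (where a $S^1\times_H(S^{n-3}\times D(W))$-piece is replaced by the free piece $S^1\times S^{n-3}\times D^2$) also carries one. You want the converse --- ``attach the free handle first, then reintroduce the codimension-two stratum via Theorem~\ref{sec:resolv-sing-2}.'' But reintroducing the stratum is exactly the direction the theorem does \emph{not} provide: it desingularizes, it does not singularize. There is no lemma in the paper that promotes a psc metric from a resolved manifold back to the original singular one; indeed this is the hard direction (collapsing $S^1_2$-orbits to cone points does not obviously preserve positivity). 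This is the genuine gap.

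The paper sidesteps this by a preliminary step you do not have. Before doing any surgery on $\pi_1,\pi_2$ or any handle decomposition, it replaces $Z$ by a bordism $Z'$ between $X$ and a \emph{different} manifold $Y'$: for every codimension-two stratum $F$ of $Z$ that does not reach $Y$, one removes a tubular neighbourhood $N$ of an orbit in $F$ together with a solid tube $\Psi$ running through $Z_{\text{max}}$ from $\partial N$ to $Y$. This surgery does two things simultaneously: it makes every remaining codimension-two stratum meet the new boundary component $Y'$, and it makes $Y'$ the manifold \emph{with} the extra codimension-two strata while the original $Y$ becomes the resolution of $Y'$. One then runs Lemma~\ref{sec:more-results-2} and Hanke's Theorem~15 (refined by Lemma~\ref{sec:more-results-1}) to put an invariant normally-symmetric psc metric on $Y'$, and only at the very last step invokes Theorem~\ref{sec:resolv-sing-2} --- now in the correct, desingularizing direction --- to pass from $Y'$ to $Y$. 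In other words, the paper arranges the bordism so that the problematic codimension-two strata are added to the \emph{target}, making $Y$ the resolution and hence within reach of Theorem~\ref{sec:resolv-sing-2}, whereas your argument tries to resolve on the \emph{source} side and then undo the resolution, which nothing in the paper licenses.

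A secondary point: your phrase ``handles which would introduce new codimension-two singular strata'' also misses the strata that neither meet $X$ nor meet $Y$ (born and dying entirely in the interior of $Z$). These cannot be reached by any handle decomposition with handles of codimension at least three, and they are exactly what the tube construction $Z\mapsto Z'$ is designed to eliminate. Without this step your handle decomposition will unavoidably contain forbidden codimension-two handles.
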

\begin{proof}
Let \(\dim Z=n+1\).
The first step in the proof is to replace the bordism \(Z\) by a \(B\)-bordism \(Z'\) between \(X\) and \(Y'\) such that
\begin{enumerate}
\item\label{item:2} All codimension-two strata in \(Z'\) meet \(Y'\).
\item\label{item:1} \(Y\) can be constructed from \(Y'\) by resolving singular strata which are parts of boundaries of orientable strata in \(Z'\).
\end{enumerate}
By (\ref{item:1}) and Theorem~\ref{sec:resolv-sing-2} it is sufficient to construct a metric of positive scalar curvature which is normally symmetric in codimension \(2\) on \(Y'\) at these singular strata.
The construction of \(Z'\) is as follows.
Let \(F\) be a codimension \(2\) singular stratum in \(Z\) which does not meet \(Y\) and \(\Omega\subset F\) an orbit.
By the slice theorem there is a \(S^1\)-invariant tubular neighborhood \(N\) of \(\Omega\) in \(Z\) which is \(S^1\)-equivariantly diffeomorphic to
\begin{equation*}
  S^1\times_H(D^{n-2}\times D(W)).
\end{equation*}
Then we have \(\partial N= S^1\times_H(D^{n-2}\times S(W)\cup S^{n-3}\times D(W))\).
Let \(B\subset \partial N\) be a tubular neighborhood of an orbit in \(S^1\times_H(D^{n-2}\times S(W))\).
Then \(B\) is equivariantly diffeomorphic to \(S^1\times D^{n-1}\).
Since \(Z_{\text{max}}/S^1\) is connected, there is an embedding
\begin{equation*}
  \Psi:S^1\times D^{n-1}\times [0,1]\hookrightarrow Z_{\text{max}},
\end{equation*}
such that
\begin{align*}
  S^1\times D^{n-1}\times \{0\}&\subset Y_{\text{max}}\\
  S^1\times D^{n-1}\times\{1\}&=B\\
  S^1\times D^{n-1}\times ]0,1[&\subset Z-(Y\cup N).
\end{align*}
We set \(Z'=Z-(N\cup \image \Psi)\).

As in the proof of Theorem 34 of \cite{MR2376283} one sees that (\ref{item:2}) and (\ref{item:1}) hold.
To be more precise we have an equivariant diffeomorphism
\begin{equation*}
  Y\cong (Y'-\phi'(S^1\times_H (S^{n-3}\times D(W))))\cup \phi(S^1\times S^{n-3}\times D^2),
\end{equation*}
where \(\phi': S^1\times_H (S^{n-3}\times D(W))\rightarrow Y'\) and \(\phi:S^1\times S^{n-3}\times D^2\rightarrow Y\) are equivariant embeddings, such that
\(\image \phi\) is contained in an equivariant coordinate chart of \(Y\).

Since \(\pi_2(B)\) is finitely generated as a \(\mathbb{Z}[\pi_1(B)]\)-module we can assume that \(\image \Psi\) avoids a finite set of embedded two-spheres which are mapped by the \(B\)-structure \(\nu\) to the generators of \(\pi_2(B)\).
Hence, we may assume that
\begin{equation*}
  \nu_*':\pi_2(Y'_{\text{max}})\rightarrow \pi_2(B)
\end{equation*}
is still surjective.

But there might be a non-trivial linking sphere \(S^1\subset Y'_{\text{max}}/S^1\) of \(\Sigma/S^1\subset Y'/S^1\) where \(\Sigma\subset Y'\) is the singular stratum \(\phi'(S^1\times_H(S^{n-3}\times \{0\})\).
This problem can be dealt with as in Hanke's paper by attaching a \(2\)-handle to \(Z\) which can be canceled by a \(3\)-handle.
Therefore the same argument as in Hanke's paper leads to an isomorphism \(\pi_1(Y')\rightarrow \pi_1(B)\).

Now it follows from Lemma~\ref{sec:more-results-2} and Theorem 15 of \cite{MR2376283} (with the refinement of Lemma~\ref{sec:more-results-1}), that \(Y'\) admits an invariant metric of positive scalar curvature which is normally symmetric in codimension \(2\) at the singular strata described  in (\ref{item:1}).
Therefore it follows from Theorem~\ref{sec:resolv-sing-2} that \(Y\) admits an invariant metric of positive scalar curvature. 
\end{proof}

\begin{remark}
\label{sec:resolv-sing-9}
  Given the other conditions on \(Z\) from the above Theorem, the condition \(\codim Z^{S^1}\geq 4\) cannot be relaxed.
  This can be seen as follows.
  Let \(Y\) be a free simply connected \(S^1\)-manifold, whose orbit space does not admit a metric of positive scalar curvature.
  Then \(Y\) is necessarily Spin and the \(S^1\)-action is of even type.
  Let \(Z\) be the trace of an equivariant surgery on an orbit in \(M\), as in Lemma~3.1 of \cite{wiemeler15:_circl}.
  Then \(Z\) is a semi-free \(S^1\)-manifold, not Spin and has a codimension-two fixed point component which meets the boundary component \(X\) which is not equal to \(Y\).
  By Theorem~2.4 of \cite{wiemeler15:_circl}, \(X\) admits an invariant metric of positive scalar curvature.
  But \(Z_{\text{max}}\) is homotopy equivalent to \(Y\) and therefore admits a Spin-structure.
  Therefore, by Remark~\ref{sec:more-results-4}, all assumptions of Theorem~\ref{sec:more-results} except the one about the codimension of the fixed point set are satisfied. 
But \(Y\) does not admit an invariant metric of positive scalar curvature by \cite[Theorem C]{berard83:_scalar}.

The explanation for this is that the bordism \(Z\) has a codimension-two fixed point component which does not meet \(Y\).
Therefore every equivariant handle decomposition of \(Z\) contains handles of codimension less than \(3\) \cite[Proposition 17]{MR2376283}.
But to these the surgery principle cannot be applied.

In principle this phenomenon also appears if there are codimension-two singular strata with finite isotropy group in \(Z\).
To deal with this case a desingularization process was introduced in \cite{MR2376283}.
\end{remark}

We have the following corollaries to Theorem~\ref{sec:more-results}:

\begin{cor}
\label{sec:resolv-sing-7}
  Let \(M\) be a Spin \(S^1\)-manifold of dimension at least six with simply connected maximal stratum and without fixed point components of codimension two.
  Then \(M\) admits an invariant metric of positive scalar curvature which is normally symmetric in codimension \(2\) if and only if \(M\) is equivariantly Spin-bordant to a manifold \(M'\) which admits such a metric and has no fixed point components of codimension two, such that the bordism \(Z\) between \(M\) and \(M'\) does not have fixed point components of codimension two.
\end{cor}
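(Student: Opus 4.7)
The ``only if'' direction is immediate: take $M'=M$ and $Z=M\times[0,1]$ with the product Spin and $S^1$-structures. This is a Spin-bordism with no fixed point components at all (beyond what $M$ itself already has, extended as a product), hence in particular no codimension-two ones.

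For the ``if'' direction, the plan is to invoke Theorem~\ref{sec:more-results} applied to the bordism $Z$, with $M'$ in the role of $X$ and $M$ in the role of $Y$. The verification of the four hypotheses proceeds as follows. First, since $Z$ is Spin, every codimension-two component of any $Z^H$ is orientable by Remark~\ref{sec:resolv-sing-5}. Secondly, $\dim Z/S^1 = \dim Z - 1 \geq \dim M \geq 6$, since the action on $Z_{\text{max}}$ is locally free with one-dimensional orbits. Thirdly, fixed components always have even real codimension in a Spin $S^1$-manifold (since the slice representation is complex), and by assumption there are no fixed components of codimension two in $Z$, so $\codim Z^{S^1}\geq 4$.

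The remaining point is the two-equivalence condition. Because $M_{\text{max}}$ is simply connected and $M$ is Spin (so $M_{\text{max}}$ is Spin), Remark~\ref{sec:more-results-4} applies with $B = B\pi_1(M_{\text{max}})\times BSpin = BSpin$, the $B$-structure on $M_{\text{max}}$ being a Spin structure. The Spin structure on $Z$ restricts to a $B$-structure $\hat{\nu}:Z_{\text{max}}\to BSpin$, whose restriction $M_{\text{max}}\to BSpin$ is a two-equivalence trivially: $\pi_1(BSpin)=\pi_2(BSpin)=0$, while $\pi_1(M_{\text{max}})=0$ by hypothesis, so the maps on $\pi_1$ and $\pi_2$ are respectively an isomorphism and a surjection.

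With all hypotheses checked, Theorem~\ref{sec:more-results} yields an $S^1$-invariant metric of positive scalar curvature on $M$, normally symmetric in codimension two outside a tubular neighborhood of $M^{S^1}$. Since $M$ itself has no codimension-two $S^1$-fixed components by assumption, the only codimension-two fixed-point strata are the $M^H$ with $H\subset S^1$ finite, and these are disjoint from $M^{S^1}$; hence the produced metric is normally symmetric in codimension two in the sense required. I expect no real obstacle in this proof: the content is bookkeeping the hypotheses of Theorem~\ref{sec:more-results} in the Spin, simply-connected-maximal-stratum setting, where the topological choice of $B$ in Remark~\ref{sec:more-results-4} collapses to $BSpin$ and the two-equivalence condition becomes vacuous.
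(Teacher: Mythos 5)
Your proof follows essentially the same route as the paper's: invoke Remark~\ref{sec:resolv-sing-5} to get orientability of the codimension-two strata of $Z$, then apply Theorem~\ref{sec:more-results}. You go further than the paper in spelling out the verification of the remaining hypotheses of Theorem~\ref{sec:more-results}, which is useful, and the ``only if'' direction via $Z=M\times[0,1]$ is fine.

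However, the final step of your ``if'' direction contains a false claim. You assert that the codimension-two components of $M^H$ with $H\subset S^1$ finite are disjoint from $M^{S^1}$. This is not true in general: for any finite $H\subset S^1$ one has $M^{S^1}\subset M^H$, so a codimension-two component $F$ of $M^H$ may well contain a component of $M^{S^1}$ of codimension $\geq 4$. For instance, for the linear $S^1$-action on $\C^2$ given by $(z_1,z_2)\mapsto (sz_1,s^2z_2)$, the origin is a codimension-four $S^1$-fixed point lying inside the codimension-two $\mathbb{Z}_2$-fixed set $\{z_1=0\}$. Consequently, Theorem~\ref{sec:more-results} only hands you a metric that is normally symmetric in codimension two \emph{outside a tubular neighborhood of $M^{S^1}$}; for a component $F$ that meets $M^{S^1}$, nothing you have said produces the required commuting isometric $S^1$-action on a full tubular neighborhood $N_F$. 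Note that the paper's own two-line proof simply writes ``the corollary follows from Theorem~\ref{sec:more-results}'' without acknowledging this discrepancy between the conclusion of the theorem and the (unqualified) conclusion of the corollary; your attempt to bridge that gap is a natural move, but the disjointness argument you use to do it is incorrect, and a genuine argument near $M^{S^1}$ would be needed.
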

\begin{proof}
  Let \(Z\) be an equivariant Spin cobordism between \(M\) and \(M'\) as above.

  Then, by Remark~\ref{sec:resolv-sing-5}, all strata of \(Z\) are orientable.
  Therefore the corollary follows from Theorem~\ref{sec:more-results}.
 \end{proof}

\begin{cor}
\label{sec:resolv-sing-6}
  Let \(M\) be a \(S^1\)-manifold of dimension at least six with simply connected non-Spin maximal stratum and without fixed point components of codimension two.
  Then there is an \(0\leq l\leq (\dim M-1)/2\) such that the equivariant connected sum of \(2^l\) copies of \(M\) admits an invariant metric of positive scalar curvature if there is an equivariant bordism from \(M\) to a manifold \(M'\) which admits a metric which is normally symmetric in codimension two, has no fixed point components of codimension two and all whose singular strata are orientable.
\end{cor}

\begin{proof}
  Let \(Z\) be a equivariant bordism between \(M\) and \(M'\).
  Then there might be codimension-two fixed point components in \(Z\).
  But by assumption they do not meet the boundary.
  Therefore we can cut them out of \(Z\).
  This construction leads to a new bordism \(Z'\) between \(M\) and \(M'\amalg M_1\amalg\dots\amalg M_k\), where \(S^1\) acts freely on the \(M_i\).
  After attaching handles of codimension at least \(3\) to \(Z'\), we may assume that the \(M_i/S^1\) are simply connected and not Spin.
  Since the \(M_i/S^1\) are not Spin, it follows from Theorem C of \cite{berard83:_scalar} that there is an invariant metric of positive scalar curvature on each \(M_i\).

  By Lemma~\ref{sec:s1-manifolds-not-4}, we can replace \(Z'\) by an equivariant bordism \(Z''\) between several copies of \(M'\amalg M_1\amalg\dots\amalg M_k\) and a disjoint union  of copies of \(M\) such that all singular strata in \(Z''\) which do not meet the copies of \(M\) are orientable.
  By adding traces of \(0\)-dimensional equivariant surgeries we can assume that there is a bordism between \(M'\amalg M_1\amalg\dots\amalg M_k\) and an equivariant connected sum \(N\)  of several copies of \(M\) such that all singular strata in the bordism which do not meet \(N\) are orientable.
  Hence, the corollary follows from Theorem~\ref{sec:more-results}.
\end{proof}

Note that if the \(T\)-manifold \(M\) satisfies condition C, then for all closed subgroups \(H\subset T\) the fixed point set \(M^H\) is orientable.

\begin{lemma}
\label{sec:resolv-sing-3}
  Let \(M\) be a \(S^1_0\)-manifold, where \(S_0^1=S^1\), which satisfies condition C such that there is a fixed point component \(F\)  of codimension two.
  Then there is an invariant metric of positive scalar curvature on \(M\) which is normally symmetric in codimension two.
\end{lemma}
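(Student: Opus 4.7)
My plan is to reprise the construction of an invariant metric of positive scalar curvature from \cite{wiemeler15:_circl}, but to carry it out in a way which, thanks to Condition~C, is automatically normally symmetric in codimension two.

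First, I would observe that Condition~C equips the normal bundle $N(F',M)$ of every codimension-two component $F'\subset M^H$ (for every closed subgroup $H\subset S^1_0$) with a complex structure compatible with the $S^1_0$-action. Fiberwise rotation in this complex line bundle defines an $S^1$-action $\sigma_{F'}$ on an invariant tubular neighborhood $N_{F'}$ of $F'$ which commutes with the $S^1_0$-action and whose fixed-point set is exactly $F'$. In particular such a $\sigma_F$ is available along the given codimension-two fixed component $F$.

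Next, I would choose on each tube $N_{F'}$ a connection-warped product metric of the form $\pi^\ast g_{F'}+dr^2+\psi(r)^2\,d\theta^2$, using polar coordinates in the normal fibre and an $S^1_0$-invariant connection on $N(F',M)\to F'$. Any such metric is automatically invariant under both the $S^1_0$-action and the extra rotation $\sigma_{F'}$. With the choice $\psi(r)=\sin(\lambda r)/\lambda$ the leading contribution to the scalar curvature near the zero section is $2\lambda^2$, so on the tube $N_F$ around the codimension-two fixed component one obtains scalar curvature as large as desired by taking $\lambda$ large, while the scalar curvature of the model on any other $N_{F'}$ remains bounded from below.

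Finally, I would glue these local models to an auxiliary $S^1_0$-invariant background metric on the complement $M\setminus\bigcup_{F'}N_{F'}$ by an $S^1_0$-invariant partition of unity which is additionally invariant under every $\sigma_{F'}$. Here Condition~C enters a second time: the compatibility clause states that for $K\subset H$ and $M^H\subset M^K$ the bundle $N(M^K,M)|_{M^H}$ is a complex summand of $N(M^H,M)$, so the various rotations $\sigma_{F'}$ preserve one another's normal decompositions and can be averaged successively without interfering. After shrinking the tubes if necessary and choosing $\lambda$ large enough at $F$, the large positive contribution from $N_F$ dominates the bounded negative contributions from the bulk and the gluing, and the resulting metric is $S^1_0$-invariant, has positive scalar curvature, and is by construction normally symmetric in codimension two. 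The main obstacle in executing the plan is the last averaging step; the point is that Condition~C is precisely what allows the various extra circle symmetries coming from different codimension-two strata to be imposed simultaneously in a consistent way.
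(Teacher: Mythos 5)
The claim in the final step of your plan is not correct, and this gap is fatal to the argument. You propose to pick an arbitrary $S^1_0$-invariant (and $\sigma_{F'}$-invariant) background metric on the complement of the tubes, and then to argue that taking the warping parameter $\lambda$ large near $F$ makes ``the large positive contribution from $N_F$ dominate the bounded negative contributions from the bulk and the gluing.'' But positive scalar curvature is a pointwise condition, not an integrated one: making $\scal$ large near the zero section of $N_F$ has no effect on the value of $\scal$ on the complement of $N_F$, where the auxiliary background metric (which you have not constrained in any way) can have arbitrarily negative scalar curvature. There is no averaging or global mechanism in your construction that would transport the positivity at $F$ to the rest of $M$; indeed $M$ could carry no psc metric at all if you dropped the circle action. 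The mere presence of a codimension-two fixed component does not, by a local warping trick, force positivity everywhere --- some global input is required.

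The paper's proof supplies that global input in an entirely different way, by reusing the bordism machinery of \cite{wiemeler15:_circl}. Roughly, in the proof of Theorem~2.4 there, $M$ is realized as the quotient by a diagonal circle of a boundary $\partial(Z\times D^2) = SF\times D^2 \cup Z\times S^1$, where $Z$ is built from the normal disc bundle to the codimension-two fixed component. One then applies Lemma 24 of \cite{MR2376283} to put a normally symmetric psc metric on this boundary via a handle decomposition, and finally descends it through the orbit map $p:\partial(Z\times D^2)\to M$ as in the proof of Theorem~2.2 of \cite{wiemeler15:_circl}. The positivity comes from the round fibers of $SF\times D^2$ and the canonical variation/O'Neill estimates in the quotient, and the normal symmetry is verified using the identification $p^{-1}(M^H)=(\partial(Z\times D^2))^H$ and Condition~C. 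Your first two paragraphs --- using Condition~C to obtain the auxiliary rotations $\sigma_{F'}$ and connection metrics on the tubes that respect them --- are in the right spirit and do capture where Condition~C is used, but the step that actually produces positive scalar curvature globally is missing; you would need to replace the ``dominance'' argument with something like the doubling-and-quotient construction the paper invokes.
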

\begin{proof}
   This follows from an inspection of the proof of Theorem~2.4 of \cite{wiemeler15:_circl} and Lemma 24 of \cite{MR2376283}.
   We use the same notation as in the proof of Theorem~2.4 of \cite{wiemeler15:_circl}.

  Since \(M\) satisfies condition C, this also holds for the \(S^1_0\times S^1_1\)-manifold \(Z \times D^2\).
  Hence it follows from Lemma 24\footnote{The proof of this lemma also holds for \(T\)-manifolds where \(T\) is a torus, instead of \(S^1\)-manifolds.} of \cite{MR2376283} , that 
  \begin{equation*}
    \partial (Z\times D^2)= SF\times D^2\cup Z\times S^1
  \end{equation*}
 admits a \(S^1_0\times S^1_1\)-invariant metric of positive scalar curvature which is normally symmetric in codimension two.

For \(H\subsetneq S^1_0\), we have \(p^{-1}(M^H)= (\partial (Z\times D^2))^H\), where 
\begin{equation*}
  p:\partial (Z\times D^2)\rightarrow M=\partial (Z\times D^2)/\diag(S^1_0\times S^1_1)
\end{equation*}
 is the orbit map of the \(\diag(S^1_0\times S^1_1)\)-action.
 Hence, it follows from the construction in the proof of Theorem~2.2 of \cite{wiemeler15:_circl} that \(M\) admits a \(S^1_0\)-invariant metric of positive scalar curvature  which is normally symmetric in codimension two.
\end{proof}

Using the above lemma we can prove the following theorem:

\begin{theorem}
  Let \(M\) be a connected \(S^1\)-manifold satisfying condition C, such that 
$$\pi_1(M_{\text{max}})=0$$
 and \(M_{\text{max}}\) is not Spin.
  Moreover, let \(J\subset \Omega_*^{C,S^1}\) be the ideal generated by connected manifolds with non-trivial \(S^1\)-actions.
  If \(\dim M \geq 6\) and \([M]\in J^2\), then \(M\) admits an \(S^1\)-invariant metric of positive scalar curvature.
\end{theorem}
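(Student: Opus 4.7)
The approach combines the generator theorem (Theorem~\ref{sec:non-semi-free}) with the bordism principle for non-spin $S^1$-manifolds (Corollary~\ref{sec:resolv-sing-6}). The plan is to exhibit $M$ as equivariantly bordant to a disjoint union of products on which an invariant metric of positive scalar curvature can be written down directly, and then transfer the metric back to $M$ via bordism.

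First I would split into two cases according to the codimension of the fixed-point set of $M$. If $M$ has a fixed-point component of codimension two, then Lemma~\ref{sec:resolv-sing-3} applies directly (using Condition C) and yields the desired metric. So assume from now on that every fixed-point component of $M$ has codimension at least four. Together with the remaining hypotheses ($M_{\text{max}}$ simply connected and not spin, $\dim M\geq 6$, and Condition C, which forces every singular stratum to be orientable), this places $M$ in the setting of Corollary~\ref{sec:resolv-sing-6}. It thus suffices to construct an equivariant bordism with orientable singular strata from $M$ to a closed $S^1$-manifold $M'$ without codimension-two fixed components that admits an invariant metric of positive scalar curvature which is normally symmetric in codimension two.

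The hypothesis $[M]\in J^2$ is what makes such an $M'$ available. Write $[M]=\sum_i [A_i]\cdot[B_i]$ in $\Omega^*_{C,S^1}$, where each $A_i$ and $B_i$ is a connected $S^1$-manifold with non-trivial action and the product carries the diagonal $S^1$-action. Because we work inside $\Omega^{C,S^1}_*[\mathcal{F}]$ for the family $\mathcal{F}$ excluding codimension-two slice types, Theorem~\ref{sec:non-semi-free} allows us to assume, up to bordism, that each $A_i$ is a generalized Bott manifold with restricted $S^1$-action and no codimension-two fixed components. Such a Bott manifold admits a $T$-invariant Riemannian metric of positive scalar curvature, built inductively as a Riemannian submersion whose $\C P^{n_j}$-fibres carry suitably scaled Fubini--Study metrics: the positive fibre scalar curvature dominates any contribution from the $A$-tensor once the fibres are small. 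Let $g_{A_i}$ denote its restriction to the $S^1$-subaction and let $g_{B_i}$ be any invariant metric on $B_i$. Then the product metric $\lambda g_{A_i}+g_{B_i}$ on $A_i\times B_i$ is invariant under $S^1\times S^1$, hence under the diagonal $S^1$; for $\lambda>0$ small enough its scalar curvature is positive. Since $(A_i\times B_i)^{S^1}=A_i^{S^1}\times B_i^{S^1}$ has codimension at least four, $M'=\bigsqcup_i(A_i\times B_i)$ has no codimension-two fixed components.

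The main obstacle is ensuring that the resulting bordism $Z$ between $M$ and $M'$ simultaneously satisfies the technical hypotheses of Corollary~\ref{sec:resolv-sing-6}: orientability of every singular stratum of $Z$ and absence of codimension-two fixed components meeting the boundary piece $M$. The first is automatic from Condition C. The second requires pushing any codimension-two singular stratum of $Z$ off the boundary component $M$, which is done by the same ambient surgery trick as in the proof of Theorem~\ref{sec:more-results} (replacing $Z$ by $Z'=Z-(N\cup\image\Psi)$ to move such strata to the opposite boundary). Once this is arranged, Corollary~\ref{sec:resolv-sing-6} delivers the desired $S^1$-invariant metric of positive scalar curvature on $M$.
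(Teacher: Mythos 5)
Your proposal diverges from the paper's proof at the crucial step, and the divergence creates a genuine gap. The paper's actual argument, after reducing to $\codim M^{S^1}\geq 4$, writes $[M]=\sum_i[M_i\times N_i]$ with $M_i,N_i$ connected and of dimension $\geq 2$, then applies Lemma~3.1 of \cite{wiemeler15:_circl} to replace each individual factor $M_i$, $N_i$ by a bordant manifold possessing an $S^1$-fixed component of codimension two. Lemma~\ref{sec:resolv-sing-3} then hands each factor an invariant psc metric that is normally symmetric in codimension two; the product metric inherits both properties, and Corollary~\ref{sec:resolv-sing-6} finishes. The $J^2$ hypothesis is thus exploited precisely to make every summand a genuine product, so that a codimension-two fixed component can be installed in \emph{each factor} and the metric built factorwise.

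Your route replaces this with an appeal to Theorem~\ref{sec:non-semi-free}, asserting that one may assume each $A_i$ is a generalized Bott manifold. But Theorem~\ref{sec:non-semi-free} states that $\Omega_*^{C,S^1}[\mathcal{F}]$ is generated by \emph{semi-free} $S^1$-manifolds \emph{and} generalized Bott manifolds; there is no way to discard the semi-free generators. Once a semi-free factor appears, your explicit inductive Fubini--Study construction does not apply, and products of two semi-free manifolds (with fixed-point codimension $\geq 4$ in each factor) are exactly the situation where the existence of an invariant psc metric is the open question — so your argument becomes circular there. The paper's proof of the related Theorem~\ref{sec:non-semi-free-1} does run through Theorem~\ref{sec:non-semi-free} and handles the semi-free generators by Theorem~4.7 of \cite{wiemeler15:_circl}, but that only gives psc on a connected sum of $2^k$ copies; removing the power of two is precisely what $[M]\in J^2$ is for, and your argument does not use it in the way that removes the factor of $2^k$.

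There is also a secondary defect: the metric $\lambda g_{A_i}+g_{B_i}$ with $g_{B_i}$ an arbitrary invariant metric has positive scalar curvature for $\lambda$ small, but it need not be normally symmetric in codimension two near the singular strata coming from $B_i$, which is a hypothesis of Corollary~\ref{sec:resolv-sing-6}. The paper's construction avoids this by arranging that \emph{both} factor metrics are normally symmetric (via Lemma~\ref{sec:resolv-sing-3} applied to each factor), so that the product metric is normally symmetric near all codimension-two strata of $M_i\times N_i$.
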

\begin{proof}
By Theorem~2.4 of \cite{wiemeler15:_circl} we may assume that \(\codim M^{S^1}\geq 4\).
  Let \(M_i,N_i\) be connected manifolds with non-trivial \(S^1\)-action satisfying Condition C, such that 
  \begin{equation*}
    [M]=\sum_i [M_i\times N_i].
  \end{equation*}
  Since \(\Omega_1^{C,S^1}=0\), we may assume that \(\dim M_i, \dim N_i \geq 2\) for all \(i\).
  Hence, by Lemma~3.1 of \cite{wiemeler15:_circl} we may assume that all \(M_i\) and \(N_i\) have \(S^1\)-fixed point components of codimension two.
  Therefore by Lemma~\ref{sec:resolv-sing-3}, we may assume that \(M_i\times N_i\) admits an \(S^1\)-invariant metric of positive scalar curvature which is normally symmetric in codimension two.
  Hence the theorem follows from Corollary~\ref{sec:resolv-sing-6}.
\end{proof}

\section{Normally symmetric metrics are generic}
\label{sec:loc_sym_are_gen}

In this section we prove that under mild conditions on the isotropy groups of the singular strata of codimension two in an \(S^1\)-manifold \(M\), any invariant metric \(g\) on \(M\) can be deformed to a metric which is normally symmetric in codimension two.

The main result of this section is as follows:

\begin{theorem}
  Let \(M\) be an orientable effective \(S^1\)-manifold. Moreover, let \(g\) be an invariant metric on \(M\).

If there are no codimension-two singular strata with isotropy group \(\mathbb{Z}_2\), then there is an invariant metric \(g'\) on \(M\) which is \(C^2\)-close to \(g\) and normally symmetric in codimension two.
\end{theorem}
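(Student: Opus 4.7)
The plan is to localize to tubular neighborhoods of each codimension-two singular stratum, take fibrewise rotation as the extra $S^{1}$-action, use the hypothesis to get pointwise invariance along the stratum, and patch an averaged metric back into $g$ via a cutoff. Since the codimension-two singular strata are a disjoint collection $\{F_{\alpha}\}$ of closed submanifolds carrying disjoint $S^{1}$-invariant tubular neighborhoods, it suffices to work with one $F$ at a time. If the isotropy is $H = S^{1}$, the $T$-action itself acts by fibrewise rotations on the normal bundle and $g$ is automatically normally symmetric there, so take $\sigma_{F}$ to be (a covering of) the $T$-action and $g' = g$. Henceforth let $H = \mathbb Z_{m}$ with $m \ge 3$; identify a tubular neighborhood $N_{F}$ equivariantly with a disk subbundle of the complex line bundle $\pi : N(F, M) \to F$ via the $g$-normal exponential, and let $\sigma_{F} : S^{1} \to \mathrm{Diff}(N_{F})$ be fibrewise complex multiplication. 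This $\sigma_{F}$ acts $\mathbb C$-linearly on each fibre, hence commutes with the equally $\mathbb C$-linear $T$-action, and its fixed-point set is exactly $F$.

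The key pointwise observation is that at each $p \in F$ the metric $g(p)$ is already $\sigma_{F}$-invariant, and this is precisely where $H \ne \mathbb Z_{2}$ is used: the cross term $g(p)|_{T_{p}F \otimes N_{p}F}$ vanishes because $H$ acts on $N_{p}F = \mathbb C$ without a non-zero invariant vector, while $g(p)|_{N_{p}F}$, being a positive-definite symmetric bilinear form on $\mathbb R^{2}$ invariant under a rotation of order $m \ge 3$, must be a positive scalar multiple of the standard inner product; for $m = 2$, the element $-I$ commutes with every symmetric matrix and this constraint fails. Define
\[ \bar g := \frac{1}{2\pi}\int_{0}^{2\pi} \sigma_{F}(e^{it})^{*} g \, dt \]
on $N_{F}$; by construction $\bar g$ is both $T$-invariant (since $\sigma_{F}$ commutes with $T$) and $\sigma_{F}$-invariant, and it agrees with $g$ pointwise along $F$. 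Decomposing $g - \bar g$ into Fourier modes under $\sigma_{F}$, only weights $n \in m\mathbb Z \setminus \{0\}$ occur; smoothness in Cartesian normal coordinates $(z, \bar z)$ then forces the $dz^{a} d\bar z^{b}$-coefficient of the weight-$n$ mode to vanish on $F$ to order $|n - (a - b)|$, which is bounded below by $m - 2$.

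Finally pick a $T$-invariant cutoff $\eta_{\delta}$ equal to $1$ on a $\delta$-tube about $F$, supported in a $2\delta$-tube, with $|\nabla^{k}\eta_{\delta}| = O(\delta^{-k})$, and set $g' := g + \eta_{\delta}(\bar g - g)$. Then $g'$ is $T$-invariant, equals $\bar g$ (and so is $\sigma_{F}$-invariant, i.e.\ normally symmetric at $F$) on the $\delta$-tube, and agrees with $g$ outside the $2\delta$-tube; running the construction on disjoint neighborhoods of all the $F_{\alpha}$ yields a global normally symmetric $g'$. The main obstacle is the quantitative $C^{2}$-estimate. Leibniz combined with the vanishing order $m-2$ of $\bar g - g$ gives $\|\eta_{\delta}(\bar g - g)\|_{C^{2}} = O(\delta^{m-4})$, which tends to $0$ as $\delta \to 0$ once $m \ge 5$. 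In the borderline range $m \in \{3,4\}$ the cutoff alone is insufficient, and one must supplement the construction by an additional $T$-equivariant self-diffeomorphism of $N_{F}$, close to the identity in $C^{3}$, chosen to absorb the lowest-weight non-invariant tensor $z^{m-2}\,dz^{2} + \overline{z^{m-2}}\,d\bar z^{2}$ before averaging. Performing this absorption while controlling the $C^{3}$-norm of the diffeomorphism is the technical heart of the argument, and it is exactly the hypothesis $H \ne \mathbb Z_{2}$ (which rules out a non-trivial weight-$0$ obstruction at $F$) that makes the absorption possible.
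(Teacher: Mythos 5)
Your averaging construction and the paper's construction are parallel but not identical. The paper replaces $g$ near the stratum $F$ by the degree-two Taylor polynomial $h$ of $g$ in the normal variables; since $h - g = O(r^3)$ automatically, the cutoff is always $C^2$-small, and the work goes into showing $h$ is rotationally invariant. You instead replace $g$ by its $\sigma_F$-average $\bar g$; invariance is automatic, and the work goes into showing $\bar g - g$ vanishes to high enough order. These are genuinely dual views of the same phenomenon, and your version correctly handles $|H|\geq 5$ with the clean estimate $O(\delta^{m-4})$.

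The gap is exactly where you flag it, at $m=3,4$, and the proposed fix is not the right one. You invoke an unspecified ``$T$-equivariant self-diffeomorphism... chosen to absorb the lowest-weight non-invariant tensor'' and describe its construction as ``the technical heart of the argument,'' but this diffeomorphism is never produced, its existence is not argued, and the heuristic at the end (``rules out a non-trivial weight-$0$ obstruction'') does not correctly describe what goes wrong at $m=2$ versus what goes wrong at $m=3,4$. What actually saves $m=3,4$ — and this is the observation the paper makes and you already have at your disposal but do not use — is the generalized Gauss lemma. You set up $N_F$ via the $g$-normal exponential, so in the resulting Fermi coordinates $(r,\varphi,u)$ the metric has the form $dr^2 + h'(r,\varphi,u)$ with no $dr$-cross terms. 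Decomposing this constraint into $\sigma_F$-Fourier modes forces the weight-$\pm m$ components of $g_{rr}$, $g_{r\varphi}$, $g_{ru_j}$ to vanish identically; for $m=3$ this kills the degree-$1$ term $z\,dz^2$ and the degree-$2$ terms $z^2\,dz\,du_j$, and for $m=4$ it kills $z^2\,dz^2$ and $z^3\,dz\,du_j$ (together with their conjugates). After this, the first non-vanishing contribution to $\bar g - g$ has total degree $\geq 3$, so $\|\eta_\delta(\bar g - g)\|_{C^2}=O(\delta)$ and the cutoff works exactly as in the $m\geq 5$ case. So the fix is not an extra absorbing diffeomorphism; the absorption is already built into the normal-exponential identification, and you only need to read off the resulting constraint. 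Without this step, the argument as written fails for isotropy $\mathbb{Z}_3$ and $\mathbb{Z}_4$.
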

\begin{proof}
  Let \(N\subset M\) be a codimension-two open singular stratum of \(M\).
  Let \(U=S^1\times_H W\times \R^{n-3}\) be a neighborhood of an orbit in \(N\).
  Here \(W\) can be assumed to be the standard one-dimensional complex representation of \(H\subset S^1\) because the \(S^1\)-action on \(M\) is effective.

  We pull back \(g\) to a metric \(\tilde{g}\) on \(\tilde{U}=S^1\times W\times \R^{n-3}\). This metric is \(S^1\times H\)-invariant.
  Let
  \begin{equation*}
    h: T\tilde{U}\otimes T\tilde{U}\rightarrow\R
  \end{equation*}
be the Taylor expansion of \(\tilde{g}\) in directions tangent to \(W\) up to terms of degree two.
Then \(h\) might be thought of as an invariant function on
\begin{equation*}
  (S^1\times W\times \R^{n-3}) \times (W\times \R^{n-2})\times (W\times \R^{n-2}),
\end{equation*}
which is linear in the copies of \(W\times \R^{n-2}\) and a polynomial of degree two in the first copy of \(W\).
Therefore \(h\) can be identified with a map \[\R^{n-3}\rightarrow ((S^0 W^*\oplus S^1W^* \oplus S^2 W^*)\otimes_\R (W^*\oplus_\R \R^{n-2})  \otimes_\R (W^*\oplus \R^{n-2}))^H,\] where \(W^*\) denotes the dual representation of \(W\) and  \(S^iW^*\) denotes the \(i\)-th symmetric product of \(W^*\).
 There are \(a_i\in \mathbb{N}\), such that
\begin{equation*}
  \begin{split}
  &\Big((S^0 W^*\oplus S^1W^* \oplus S^2 W^*)\otimes_\R ((W^*\oplus \R^{n-2}))\otimes_\R (W^*\oplus \R^{n-2})\Big)^H\otimes \C\\ &\subset
  \Big((\C\oplus (W^*\oplus W) \oplus (W^*\oplus W)\otimes_\C(W^*\oplus W))\otimes_\C((W^*\oplus W\oplus (n-2)\C)\\ & \quad\otimes_\C(W^*\oplus W \oplus (n-2)\C)\Big)^H\\
&=\Big(W^{*\otimes 4}\oplus W^{\otimes 4} \oplus a_3(W^{*\otimes 3}\oplus W^{\otimes 3})\oplus a_2 (W^{*\otimes 2}\oplus W^{\otimes 2})\\ & \quad \oplus a_1 (W^{*}\oplus W)\oplus a_0\C\Big)^H.
  \end{split}
\end{equation*}

Moreover, for \(H\) of order greater than \(4\) and \(b\leq 4\), we have
\begin{align*}
  (W^{\otimes b})^H&=(W^{\otimes b})^{S^1}&(W^{*\otimes b})^H&=(W^{*\otimes b})^{S^1}.
\end{align*}

Hence, it follows that \(h\) is  invariant under the rotational action of \(S^1\) on \(W\) if the order of \(H\) is greater than \(4\).

Now we can deform \(\tilde{g}\) so that it coincides with \(h\) in a neighborhood of \(S^1\times \{0\}\subset \tilde{U}\).
This metric induces a metric on \(U\) which is invariant under the rotational action of \(S^1\) on \(W\).

Since \(N\) is orientable the rotational action on \(W\) extends to an action on a neighborhood of \(N\) in \(M\) with fixed point set \(N\).
Therefore we can glue the metrics on different neighborhoods of orbits in \(N\).
This implies the claim if there are no singular strata of codimension two with isotropy group \(\mathbb{Z}_k\), \(k\leq 4\).

Now assume that that there is a singular stratum of codimension two with isotropy group \(\mathbb{Z}_3\).
Then we have to show that the projection \(\bar{h}\)  of \(h\) to \(a_3(W^{*\otimes 3}\oplus W^{\otimes 3})\) is trivial.
This projection has the form
\begin{equation*}
  \bar{h}= \alpha_1(u) z dz dz + \beta_1(u) \bar{z}d\bar{z}d\bar{z} + \sum_{j}(\alpha_{2j}(u) z^2 dz du_j + \beta_{2j}(u) \bar{z}^2 d\bar{z} du_j).
\end{equation*}
Here \(z\) denotes the complex coordinates in \(W\) and \(u\) denotes the coordinates in \(S^1\times\mathbb{R}^{n-3}\).

In polar coordinates \(z=re^{i\varphi}\) the above expression is equal to
\begin{align*}
\bar{h}&
=  r(\alpha_1(u)e^{i3\varphi}+ \beta_1(u)e^{-i3\varphi}) dr dr + r^2i(\alpha_1(u) e^{3i\varphi}- \beta_1(u) e^{-3i\varphi}) dr d\varphi \\ &\quad + \sum_j r^2(\alpha_{2j}(u)e^{3i\varphi}+ \beta_{2j}(u)e^{-3i\varphi})  dr du_j\\ &\quad - r^3(\alpha_1(u)e^{3i\varphi} + \beta_1(u)e^{-3i\varphi}) d\varphi d\varphi + \sum_j ir^3(\alpha_{2j}(u)e^{3i\varphi}-\beta_{2j}(u)e^{-3i\varphi}) d\varphi du_j. 
\end{align*}

But by the generalized Gauss Lemma \cite[Section 2.4]{MR2024928} we may assume that \(g\) is of the form
\begin{equation*}
  dr dr + h'(u,r,\varphi),
\end{equation*}
where \(h'(u,r,\varphi)\) is a metric on \(S^1\times \mathbb{R}^{n-3}\times S^1_r\).
Here \(S^1_r\) denotes the circle of radius \(r\) in \(W\).

Hence, we may assume that \(\alpha_1=\beta_1=\alpha_{2j}=\beta_{2j}=0\).
Therefore the metric can be deformed as in the first case.

The case of singular strata with isotropy group \(\mathbb{Z}_4\) is similar and left to the reader.
\end{proof}

If \(M\) is Spin and the \(S^1\)-action on \(M\) is of even type then there are no components of \(M^{\mathbb{Z}_2}\) of codimension two in \(M\).
Therefore we get the following corollary to the above theorem.

\begin{cor}
  Let \(M\) be a Spin \(S^1\)-manifold with an effective action of even type.
  Then \(M\) admits an invariant metric of positive scalar curvature if and only if it admits an invariant metric of positive scalar curvature which is normally symmetric in codimension two.
\end{cor}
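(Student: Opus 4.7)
The plan is to derive the corollary as an immediate consequence of the theorem just proved, once the theorem's hypothesis is verified. The only nontrivial direction is: given an arbitrary invariant metric $g$ of positive scalar curvature, produce one that is additionally normally symmetric in codimension two. The theorem supplies a $C^2$-close invariant deformation $g'$ which is normally symmetric in codimension two, provided $M$ has no codimension-two singular stratum with isotropy $\mathbb{Z}_2$. Since positive scalar curvature is a $C^2$-open condition on the space of invariant metrics, any sufficiently close such $g'$ again has positive scalar curvature, which finishes the nontrivial direction. The reverse implication is tautological.

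Thus the entire argument reduces to the claim: \emph{for a spin $S^1$-manifold $M$ with effective action of even type, every codimension-two component $F$ of $M^{\mathbb{Z}_2}$ is already contained in $M^{S^1}$.} I would prove this by a weight-parity computation. The normal bundle $N$ of $F$ in $M$ is a real oriented rank-two bundle, orientable because $M$ is orientable and $F$ is a connected component of the fixed set of a compact group acting smoothly on $M$. It is therefore a complex line bundle, on which $S^1$ acts by some integer weight $w$. The element $-1 \in \mathbb{Z}_2 \subset S^1$ then acts on each fibre of $N$ by the scalar $(-1)^w$; if the generic isotropy along $F$ is exactly $\mathbb{Z}_2$, i.e.\ if $F \not\subset M^{S^1}$, this scalar must be $-1$, forcing $w$ to be odd.

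On the other hand, the even-type hypothesis means the $S^1$-action on the $SO$-frame bundle of $M$ lifts to the $\mathrm{Spin}$-frame bundle. Restricting this lifted action to an $S^1$-orbit in $F$ and identifying the action on the normal factor, one obtains a lift of the weight-$w$ homomorphism $S^1 \to SO(2) \cong U(1)$, $e^{i\theta} \mapsto e^{iw\theta}$, through the double cover $\mathrm{Spin}(2) \to SO(2)$, which is the squaring map $U(1) \to U(1)$. A single-valued homomorphism $S^1 \to U(1)$ whose square is $e^{i\theta} \mapsto e^{iw\theta}$ exists if and only if $w$ is even; this contradicts the parity established above and rules out such $F$.

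The main potential obstacle is the weight-parity step: one must phrase the spin lifting obstruction correctly along an orbit rather than at an unfixed point, and invoke the standard fact that a weight-$w$ complex $S^1$-representation lifts across the spin double cover exactly when $w$ is even. Once this is in place, applying the theorem and invoking $C^2$-openness of positive scalar curvature closes the proof with no further work.
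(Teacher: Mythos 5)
Your overall strategy is exactly the paper's: verify the hypothesis of the preceding theorem (no codimension-two singular strata with isotropy $\mathbb{Z}_2$), apply the theorem to get a $C^2$-close invariant metric that is normally symmetric in codimension two, and close with $C^2$-openness of positive scalar curvature. The reverse implication is indeed tautological. The paper treats the intermediate fact as known and dispatches it in one sentence; you attempt to prove it, and that is where the argument has genuine gaps.

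First, the orientability of the normal bundle $N$ of $F$ in $M$ is asserted for a false reason. It is simply not true that a connected fixed-point component of a smooth compact group action on an orientable manifold has orientable normal bundle: complex conjugation on $\mathbb{C}P^{2}$ fixes $\mathbb{R}P^{2}$, whose normal bundle is isomorphic to $T\mathbb{R}P^{2}$ and hence non-orientable. (What is true, since $M$ is orientable, is $w_1(TF)=w_1(NF)$, so both are orientable or neither is; that does not help you here.) Second, and more seriously, your argument speaks of ``the integer weight $w$ by which $S^1$ acts on $N$'' and of ``a lift of the weight-$w$ homomorphism $S^1\to SO(2)$.'' But you are working precisely in the case $F\not\subset M^{S^1}$, so a generic point $x\in F$ is not $S^1$-fixed; no representation of $S^1$ on the normal slice $N_x$ exists. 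Only the finite isotropy group $\mathbb{Z}_2$ acts on $N_x$, and its nontrivial element acts by $-\mathrm{id}$. There is no well-defined integer $w$ whose parity you can test, so both halves of your parity comparison are ill-founded.

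The fact you need is nevertheless true, and the standard proof (essentially Atiyah--Bott) runs entirely at the level of the single involution $\tau=\sigma(-1)$. At any $\tau$-fixed point $x$, $d\tau_x\in SO(T_xM)$ is a product of $k$ commuting rotations by $\pi$ in orthogonal planes, where $2k=\codim(F,M)$, and no orientability of $F$ or $N$ is needed for this. Either lift $g\in\mathrm{Spin}(T_xM)$ of $d\tau_x$ satisfies $g^2=(-1)^k$ (compute with $g=\pm e_1e_2\cdots e_{2k-1}e_{2k}$). Even type means the whole $S^1$-action lifts to the principal $\mathrm{Spin}$-bundle, so in particular $\tau$ lifts to an honest involution $\tilde\tau$; then $\tilde\tau^2=\mathrm{id}$ forces $g^2=1$, hence $k$ even, hence $\codim F\equiv 0\pmod 4$. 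In particular there is no codimension-two component of $M^{\mathbb{Z}_2}$ at all, let alone one with generic isotropy exactly $\mathbb{Z}_2$. Replacing your weight-parity step by this computation closes the gap.
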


\section{An obstruction to invariant metrics of positive scalar curvature}
\label{sec:obstruction}

Before we prove existence results for invariant metrics of positive scalar curvature on Spin-\(S^1\)-manifolds, we introduce an obstruction to the existence of such metrics.
Throughout this section we only deal with  Spin-\(S^1\)-manifolds with actions of even type.

To define our obstruction we first have to define \(S^1\)-equivariant versions of Stolz's relative bordism groups.
This goes as follows:

\begin{definition}
  Let \(\mathcal{F}\) be a family of \(S^1\)-slice types. Then the elements of the relative bordism group
\[R_n^{S^1}[\mathcal{F}]\]
are represented by pairs \((M,g)\) where
\begin{enumerate}
\item \(M\) is an \(n\)-dimensional Spin manifold with boundary \(\partial M\) with an even \(S^1\)-action of type \(\mathcal{F}\).
\item \(g\) is an \(S^1\)-invariant metric of positive scalar curvature on \(\partial M\).
\end{enumerate}

Two such pairs \((M,g)\) and \((N,h)\) are identified if there are
\begin{enumerate}
\item an \((n+1)\)-dimensional Spin manifold \(Z\) with boundary with an even \(S^1\)-action of type \(\mathcal{F}\),
\item an \(n\)-dimensional Spin manifold \(Y\) with boundary with an even \(S^1\)-action of type \(\mathcal{F}\) such that \(\partial Y\) is equivariantly diffeomorphic to \(\partial M \amalg \partial N\),
\item a invariant metric of positive scalar curvature on \(Y\) which extends the metrics \(g,h\) on the boundary and has product form near the boundary,
\end{enumerate}
such that \(\partial Z\) is equivariantly diffeomorphic to \(M\cup_{\partial M} Y\cup_{\partial N} N\).
\end{definition}

The following two theorems are crucial for the construction of our obstruction:

\begin{theorem}
\label{sec:an-obstr-invar}
  Let \(\mathcal{F}_1,\mathcal{F}_2\) be two families of effective \(S^1\)-slice types such that there is a slice type \(\sigma=[H,W]\) with \(\mathcal{F}_1=\mathcal{F}_2\amalg\{\sigma\}\).
Assume that the dimension of the \(H\)-representation \(W\) is at least \(3\).

Let \((M,g)\) an \(S^1\)-manifold with boundary of type \(\mathcal{F}_1\) with an invariant metric \(g\) of positive scalar curvature on \(\partial M\).

Then there is an \(S^1\)-manifold \(M'\)  with boundary of type \(\mathcal{F}_2\) with an invariant metric \(g'\) of positive scalar curvature on \(\partial M'\), such that \(g\) can be extended to an invariant metric of positive scalar curvature on all of \(M\) if and only if \(g'\) can be extended to an invariant metric of positive scalar curvature on all of \(M'\).

Moreover, the assignment
\begin{align*}\Phi_{\mathcal{F}_1,\mathcal{F}_2}:R_n^{S^1}[\mathcal{F}_1]&\rightarrow R_n^{S^1}[\mathcal{F}_2]& [M,g]&\mapsto [M',g']\end{align*}
is a well-defined group homomorphism.
\end{theorem}

For the proof of this theorem we need the following straightforward generalization of Theorem 2.13 of \cite{MR2789750}. We leave the proof to the reader.

\begin{lemma}
\label{sec:an-obstr-invar-1}
  Let \(X\) be a smooth compact \(S^1\)-manifold of dimension \(n\).
  Let \(Y\subset X\) be a compact invariant submanifold of codimension at least \(3\) and \(B\) a compact space.

  Moreover let \(\mathcal{B}=\{g_b\in \Riem_{S^1}^+(X);b\in B\}\) be a continuous family of invariant psc-metrics on \(X\).
Here \(\Riem_{S^1}^+(X)\) denotes the space of \(S^1\)-invariant psc-metrics on \(X\) equipped with the \(C^2\)-topology.
  Let \(h\) be an invariant metric on \(Y\).
  
  Then there is a continuous map
  \begin{align*}
    \mathcal{B}&\rightarrow \Riem_{S^1}^+(X)&g_b&\mapsto g_b^{\text{std}}
  \end{align*}
such that
\begin{enumerate}
\item Each metric \(g_b^{\text{std}}\) has standard form on a tubular neighborhood of \(Y\) and coincides with \(g_b\) outside a slightly bigger neighborhood.
  Here a metric on a tubular neighborhood of \(Y\) is called of standard form if it is a connection metric with base metric \(h\) and fibers isometric to torpedo metrics.
\item The map \(g_b\mapsto g_b^{\text{std}}\) is homotopic to the inclusion \(\mathcal{B}\hookrightarrow \Riem_{S^1}^+(X)\).
\end{enumerate}
\end{lemma}

\begin{proof}[Proof of Theorem~\ref{sec:an-obstr-invar}]
  Let \([M,g]\) be an element of \(R_n^{S^1}[\mathcal{F}_1]\).
  Then the \(\sigma\)-stratum \(M_{(\sigma)}\) of \(M\) is a compact invariant submanifold of codimension at least three in \(M\) such that \(\partial( M_{(\sigma)})=(\partial M)\cap M_{(\sigma)}\).

Therefore by applying Lemma~\ref{sec:an-obstr-invar-1} in the case \(\mathcal{B}=\{g\}\) (and \(B=[0,1]\)) we get a metric \(g_1\) of positive scalar curvature on \(\partial M\) (unique up to isotopy) such that \(g_1\) has standard form near \(\partial M_{(\sigma)}\).

By cutting out a small invariant tubular neighborhood of \(M_{(\sigma)}\) from \(M\), we get a new manifold \(M'\) with

\begin{equation*}
  \partial M'=S(M_{(\sigma)})\cup_{S(\partial M_{(\sigma)})}(\partial M -D(\partial M_{(\sigma)})).
\end{equation*}
Here \(S(M_{(\sigma)})\), \(S(\partial M_{(\sigma)})\) and \(D(\partial M_{(\sigma)})\) denote the normal sphere bundle of \(M_{(\sigma)}\) in \(M\) and the normal sphere and disc bundles of  \(\partial M_{(\sigma)}\) in \(\partial M\), respectively.

We can extend the metric \(g_1|_{\partial M -D(\partial M_{(\sigma)})}\) by a connection metric on \(S(M_{(\sigma)})\) with fibers isometric to round spheres to get an invariant  metric on all of \(\partial M'\).
By shrinking the fibers of this connection metric we can assume that the extended metric \(g'=g_2\) has positive scalar curvature.

If also follows from Lemma~\ref{sec:an-obstr-invar-1} that \(g'\) extends to a metric of positive scalar curvature on \(M'\) if \(g\) extends to such a metric on \(M\).

It  remains to show that the bordism class \([M',g']\) depends only on the bordism class of \((M,g)\).

To do so, let \((N,h)\) be another pair which represents \([M,g]\).
Then there is a \((n+1)\)-dimensional manifold \(Z\) with \(\partial Z=M\cup_{\partial M} Y\cup_{\partial N} N\) and \(Y\) has an invariant psc-metric extending \((g,h)\).

By Lemma~\ref{sec:an-obstr-invar-1}, we can assume that the metric on \(Y\) has standard form near \(Y_{(\sigma)}\) and restricts to \(g_1\), \(h_1\) on \(\partial M\) and \(\partial N\) respectively.
Then by cutting out a small tubular neighborhood of \(Y_{(\sigma)}\) from \(Y\) and arguing as in the construction of \(g'\) we get a bordism between \((M',g')\) and \((N',h')\).

Hence we have shown that the assignment \([M,g]\mapsto [M',g'']\) has all the desired properties.
\end{proof}

The second theorem which we need for our construction deals with the case of codimension-two singular strata.

\begin{theorem}
\label{sec:an-obstr-invar-2}
  Let \(\mathcal{F}_1,\mathcal{F}_2\) be two families of effective \(S^1\)-slice types such that there is a slice type \(\sigma=[H,W]\) with \(\mathcal{F}_1=\mathcal{F}_2\amalg\{\sigma\}\).
Assume that the dimension of the \(H\)-representation \(W\) is \(2\).

Let \((M,g)\) an \(S^1\)-manifold with boundary of type \(\mathcal{F}_1\) with an invariant metric \(g\) of positive scalar curvature on \(\partial M\).

Then there is an \(S^1\)-manifold \(M'\)  with boundary of type \(\mathcal{F}_2\) with an invariant metric \(g'\) of positive scalar curvature on \(\partial M'\), such that \(g'\) can be extended to an invariant metric of positive scalar curvature on all of \(M'\) if \(g\) can be extended to an invariant metric of positive scalar curvature on all of \(M\).
Moreover, the complement of the \(\sigma\)-stratum in \(M\) is equivariantly diffeomorphic to an invariant open dense subset of \(M'\).

The assignment
\begin{align*}\Phi_{\mathcal{F}_1,\mathcal{F}_2}:R_n^{S^1}[\mathcal{F}_1]&\rightarrow R_n^{S^1}[\mathcal{F}_2]& [M,g]&\mapsto [M',g']\end{align*}
is a well-defined group homomorphism.
\end{theorem}

\begin{proof}
Let \([M,g]\) be an element of \(R_n^{S^1}[\mathcal{F}_1]\).
We can assume that \(g\) is normally symmetric in codimension two by the discussion in section \ref{sec:loc_sym_are_gen}.

We can apply the following desingularization process to get a manifold \(M'\) with boundary and without codimension two singular strata.
It is similar to the desingularization process in \cite[Section 4]{MR2376283}.

Let \(N\) be the \(\sigma\)-stratum in \(M\).
Then a neighborhood of \(N\) in \(M\) is diffeomorphic to a fiber bundle \(E\) with fiber \(S^1\times_H D(W)\) and structure group \(S^1\times_H SO(W)=S^1\times_H S^1_1\) over \(N/S^1\).

The boundary of this neighborhood \(\partial E\) is given by the principal \(S^1\times_H S_1^1\)-bundle \(P\) associated to \(E\).

Now we fix a circle subgroup \(S^1_2\subset S^1\times_H S^1_1\) with \(S^1\cap S^1_2=\{1\}\).
Denote by \(E'\) the \(S^1\times D^2\)-bundle \(P\times_{S^1_2} D^2\) where \(S^1_2\) acts by rotation on \(D^2\).
Then \(E'\) has the same boundary as \(E\) and we define
\begin{equation*}
  M'=(M - E)\cup_P E'.
\end{equation*}
This \(M'\) does not have singular strata of type \(\sigma\).

Note here that \(M'\) itself might depend on the choice of \(S^1_2\).
But its orbit space \(M'/S^1\) does not depend on this choice.

Since the constructions in Section 4 of \cite{MR2376283} are mainly local arguments, they also hold in our desingularization process.
This means that, if \(M\) admits a metric of positive scalar curvature which is normally symmetric in codimension two, then \(M'\) also admits such a metric.
 
In any case we have a metric \(g'\) on \(M'\) whose restriction to the boundary has positive scalar curvature.
Checking that the assignment \([M,g]\mapsto [M',g']\) has all the required properties is similar to the proof of Theorem~\ref{sec:an-obstr-invar}.
We omit the details.
\end{proof}

Since there are only finitely many slice types in any compact \(S^1\)-manifold, by composing the maps from Theorems \ref{sec:an-obstr-invar} and \ref{sec:an-obstr-invar-2} we get a homomorphism
\begin{equation*}
  \Phi:R_n^{S^1}[\mathcal{AE}]\rightarrow R_n^{S^1}[\mathcal{F}_0]
\end{equation*}
where \(\mathcal{AE}\) and \(\mathcal{F}_0\) are the families of all effective \(S^1\)-slice types and the free slice type, respectively.

Using the construction in the proof of Berard Bergery's Theorem C \cite{berard83:_scalar}, we get a homomorphism
\begin{equation*}
  \Psi: R_n^{S^1}[\mathcal{F}_0]\rightarrow R_{n-1},
\end{equation*}
which sends a pair \((M,g)\) to \((M/S^1,h)\), where \(h\) is the metric of positive scalar curvature on the orbit space constructed by Berard Bergery and \(R_{n-1}\) denotes the usual non-equivariant Stolz relative bordism group.

Therefore we can define an obstruction \(\hat{A}_{S^1}\) to the existence of invariant positive scalar curvature metrics on compact \(S^1\)-manifolds as follows:
\begin{equation*}
  \hat{A}_{S^1}(M)=\hat{A}\circ \Psi\circ \Phi(M)\in \mathbb{Z}.
\end{equation*}
Here \(\hat{A}(M)\) is the index of the Dirac operator associated to a metric on \(M\) which extends the psc-metric on the boundary.
It vanishes if the metric on \(M\) can be chosen to have positive scalar curvature.

For semi-free \(S^1\)-manifolds \(M\), \(\hat{A}_{S^1}(M)\) coincides with the index obstruction to metrics of positive scalar curvature defined by Lott \cite{MR1758446}.
He defines his obstruction for semi-free \(S^1\)-manifolds as the integral of the \(\hat{A}\)-class over the regular part of \(M/S^1\).
Moreover, he shows that the value of this integral coincides with the index of a Dirac operator on \((M-D(M^{S^1}))/S^1\) with respect to a metric with suitable boundary conditions.
Here \(D(M^{S^1})\) denotes an open tubular neighborhood of the fixed point set in \(M\).

We summarize what we have proven in the following theorem.

\begin{theorem}
  Let \(M\) be an \(n\)-dimensional closed Spin-manifold with an effective action of \(S^1\) of even type.
  Then \(\hat{A}_{S^1}(M)\) is an invariant of the equivariant Spin-bordism type of \(M\). Moreover, it vanishes if there is an invariant psc-metric on \(M\).
\end{theorem}

\section{Invariant metrics of positive scalar curvature and a result of Atiyah and Hirzebruch}
\label{sec:atiyah_hirzebruch}

Now we can complete the proofs of our existence results for invariant metrics of positive scalar curvature. These are as follows: 

\begin{theorem}
\label{sec:non-semi-free-1}
  Let \(M\) be a connected effective \(S^1\)-manifold of dimension at least six such that \(\pi_1(M_\text{max})=0\) and \(M_{\text{max}}\) is not Spin.
  Then there is a \(k\geq 0\) such that the equivariant connected sum of \(2^k\) copies of \(M\) admits an invariant metric of positive scalar curvature.
\end{theorem}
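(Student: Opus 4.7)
The plan is to combine the bordism principle of Corollary~\ref{sec:resolv-sing-6} with the module decomposition provided by Theorem~\ref{sec:non-semi-free}. By Theorem~2.4 of \cite{wiemeler15:_circl} I may first assume $\codim M^{S^1}\geq 4$, which places $[M]$ in $\Omega_*^{C,SO,S^1}[\mathcal{F}]$ where $\mathcal{F}$ is the family of effective slice types without codimension-two fixed-point components. Theorem~\ref{sec:non-semi-free} then supplies an $\Omega_*^{SO}$-linear decomposition
\[
[M]=\sum_{j}[A_{j}]\cdot[B_{j}],
\]
in which each $A_{j}$ carries the trivial $S^{1}$-action and each $B_{j}$ is either a semi-free $S^{1}$-manifold or a generalized Bott manifold with restricted $S^{1}$-action, in both cases without codimension-two fixed-point components. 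Since $[M\#M]=2[M]$ in this bordism group, the task reduces to producing a representative of $2[M]$ that carries an invariant metric of positive scalar curvature normally symmetric in codimension two.

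Next I would treat the two generator types separately. When $B_{j}$ is a generalized Bott manifold with restricted $S^{1}$-action, I would construct an invariant metric of positive scalar curvature on $B_{j}$ itself by iterating the Riemannian-submersion construction along the tower of projective bundles, rescaling each fibre so that O'Neill's formula yields positive scalar curvature; the hypothesis that the restricted action has no codimension-two fixed components is precisely what is needed for this rescaling to be compatible near every singular stratum. A product metric with any Riemannian metric on $A_{j}$ then also has positive scalar curvature and is $S^{1}$-invariant. When $B_{j}$ is semi-free with non-spin maximal stratum, the existence results of \cite{wiemeler15:_circl} supply an invariant metric of positive scalar curvature on the equivariant connected sum $B_{j}\#B_{j}$; since $A_{j}\times(B_{j}\#B_{j})$ represents $2[A_{j}]\cdot[B_{j}]$, the doubling inherent in ``$M\#M$'' absorbs exactly the factor of two required by the semi-free summands and is harmless for the generalized-Bott summands.

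Assembling the pieces, $2[M]$ is represented by a disjoint union of manifolds each carrying an invariant metric of positive scalar curvature normally symmetric in codimension two. Because $\pi_{1}(M_{\text{max}})=0$, $M_{\text{max}}$ is not spin, and Condition~C forces every closed stratum $M^{H}$ to be orientable, Corollary~\ref{sec:resolv-sing-6} transports such a metric across the resulting bordism back to $M\#M$.

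The main obstacle is the existence claim for the generalized-Bott generators: one must verify that the iterated fibrewise rescaling can be carried out $S^{1}$-equivariantly, that the induced metric is normally symmetric in codimension two, and that the restricted $S^{1}$-action introduces no codimension-two singular stratum beyond those already excluded by Theorem~\ref{sec:non-semi-free}. The codimension-four assumption on $M^{S^{1}}$ and the non-spin hypothesis together ensure that this local construction is not obstructed by the index of Section~\ref{sec:obstruction} and that the singularity resolution of Theorem~\ref{sec:resolv-sing-2} is available when gluing the local metrics together.
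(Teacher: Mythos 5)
Your overall skeleton matches the paper's proof: reduce to $\codim M^{S^1}\geq 4$ by Theorem~2.4 of \cite{wiemeler15:_circl}, decompose the bordism class via Theorem~\ref{sec:non-semi-free} into trivial-action factors times semi-free or generalized-Bott generators, equip those generators with invariant metrics that are normally symmetric in codimension two, and transport the metric back to $M\#M$ through Corollary~\ref{sec:resolv-sing-6}. The treatment of the generalized Bott factors by an iterated Riemannian-submersion construction is reasonable and is essentially the unstated input the paper also relies on, although your explanation of \emph{why} the normal symmetry holds is off: it is not the absence of codimension-two $S^1$-fixed components but the fact that the metric can be made invariant under the \emph{whole} torus $T^n$, and the codimension-two singular strata of the restricted $S^1_0$-action lie in characteristic submanifolds fixed by other circles $S^1_1\subset T^n$ acting isometrically. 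Your appeal to the index of Section~\ref{sec:obstruction} in this non-spin setting is also misplaced, as that obstruction is defined only for spin manifolds with actions of even type.

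The genuine gap is in the semi-free case. You write ``When $B_j$ is semi-free with non-spin maximal stratum, the existence results of \cite{wiemeler15:_circl} supply an invariant metric of positive scalar curvature on $B_j\#B_j$,'' but nothing in Theorem~\ref{sec:non-semi-free} guarantees that the semi-free generators have non-spin or simply connected maximal strata, and in general they will not. Applied as you state it, the argument simply breaks on any $B_j$ whose maximal stratum is spin. The paper resolves exactly this point by modifying the \emph{bordism} rather than the generators: one attaches equivariant $S^1$-handles of free type and codimension at least three to the Condition-C bordism $Z$ between $M$ and $M_1\amalg M_2$, which performs equivariant surgery on the semi-free boundary component $M_1$ (without disturbing Condition~C or the codimension-two stratification) until its maximal stratum is simply connected and not spin. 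Only then is the semi-free existence result (Theorem~4.7 of \cite{wiemeler15:_circl}) applicable. Without this surgery step your argument does not go through, because the hypothesis ``non-spin maximal stratum'' you need on the semi-free side is exactly what must be \emph{produced}, not assumed.
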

\begin{proof}
  By Theorem~2.4 of \cite{wiemeler15:_circl}, we may assume that there is no codimension-two fixed point component in \(M\).
  Therefore, by Corollary~\ref{sec:resolv-sing-6}, it is sufficient to show that \(2M\) is equivariantly bordant to a manifold \(M'\) which admits an invariant metric of positive scalar curvature which is normally symmetric in codimension two such that \(\codim M'^{S^1} \geq 4\) and all of whose singular strata are orientable.
  From Theorems~\ref{sec:non-semi-free} and \ref{sec:s1-manifolds-not-1}, we see that there is an equivariant bordism \(Z\) between \(M\) and \(M'=M_1\amalg M_2\), where \(M_1\) is a semi-free \(S^1\)-manifold and \(M_2\) is a generalized Bott manifold. Note that \(M_2\) admits an invariant metric of positive scalar curvature which is normally symmetric in codimension two and that all singular strata in \(M_2\) are orientable. Moreover, \(M'\) satisfies \(\codim {M'}^{S^1}\geq 4\).
  After attaching \(S^1\)-handles to \(Z\) we may assume that all components of \(M_1\) are simply connected and not Spin.

Hence, the Theorem follows from Theorem~4.7 of \cite{wiemeler15:_circl}. 
\end{proof}

Now we turn to the proof of a similar result for Spin-manifolds.

\begin{theorem}
\label{sec:Spin-case-1}
  Let \(M\) be a Spin \(S^1\)-manifold with \(\dim M\geq 6\), an effective \(S^1\)-action of odd type and \(\pi_1(M_{\text{max}})=0\). Then there is a \(k\in \N\) such that the equivariant connected sum of \(2^k\) copies of \(M\) admits an invariant metric of positive scalar curvature which is normally symmetric in codimension two.
\end{theorem}

\begin{theorem}
\label{sec:Spin-case-2}
  For a Spin \(S^1\)-manifold with \(\dim M\geq 6\), an effective \(S^1\)-action of even type and \(\pi_{1}(M_{\text{max}})=0\), we have \(\hat{A}_{S^1}(M)=0\) if and only if there is a \(k\in \N\) such that the equivariant connected sum of \(2^k\) copies of \(M\) admits an invariant metric of positive scalar curvature which is normally symmetric in codimension two.
\end{theorem}

\begin{proof}
  By Theorem \ref{sec:Spin-case} and Proposition \ref{sec:s1-manifolds-not}, the connected sum of \(2^l\) copies of \(M\) is equivariantly bordant to a union \(M_1\amalg M_2\), where \(M_1\) is a semi-free simply connected \(S^1\)-manifold and \(M_2\) is a \(S^1\)-manifold which admits an invariant metric of positive scalar curvature which is normally symmetric in codimension two.

If the \(S^1\)-action on \(M\) is of even type, we have \(\hat{A}_{S^1}(M)=2^{-l}\hat{A}_{S^1}(M_1/S^1)\). Now the theorems follow from Theorems 4.7 and 4.11 of \cite{wiemeler15:_circl}.
\end{proof}

As an application of our results we give a new proof of the following result of Atiyah and Hirzebruch \cite{MR0278334}. 

\begin{theorem}[\cite{MR0278334}]
  Let \(M\) be a Spin manifold with a non-trivial action of \(S^1\).
  Then \(\hat{A}(M)\) vanishes.
\end{theorem}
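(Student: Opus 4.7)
The plan is to derive the theorem from Theorems \ref{sec:introduction-1} and \ref{sec:introduction-2}, combined with two elementary observations: the $\hat{A}$-genus vanishes automatically outside dimensions divisible by four, and the invariant $\hat{A}_{S^1}$ constructed in Section \ref{sec:obstruction} can only be nonzero in dimensions $\equiv 1 \pmod 4$.

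First I would carry out a sequence of reductions. Replacing $S^1$ by its quotient by the kernel of ineffectiveness, we may assume that the action is effective. If $\dim M \not\equiv 0 \pmod 4$, then $\hat{A}(M) = 0$ for dimensional reasons; so assume $\dim M = 4k$. If $\dim M < 6$, I would replace $M$ by $M \times N$, where $N$ is a simply connected closed spin manifold carrying the trivial $S^1$-action and with $\hat{A}(N) \neq 0$ (for instance a product of $K3$ surfaces). Since $\hat{A}$ is multiplicative, $\hat{A}(M \times N) = 0$ implies $\hat{A}(M) = 0$, and the product still carries a nontrivial effective $S^1$-action. To arrange $\pi_1(M_{\text{max}}) = 0$, I would pass to a suitable finite cover to which the $S^1$-action lifts; since finite covers multiply $\hat{A}$ by the covering degree, vanishing for the cover implies vanishing for $M$.

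With $M$ now satisfying the hypotheses of Theorems \ref{sec:introduction-1} and \ref{sec:introduction-2}, I split into cases according to whether the action is of odd or even type. In the odd type case, Theorem \ref{sec:introduction-1} furnishes some $k \in \mathbb{N}$ such that the equivariant connected sum of $2^k$ copies of $M$ admits an invariant metric of positive scalar curvature. By the Lichnerowicz formula, this connected sum has vanishing $\hat{A}$-genus, and since it is spin-bordant to the disjoint union of $2^k$ copies of $M$, I conclude $2^k \hat{A}(M) = 0$ and hence $\hat{A}(M) = 0$. In the even type case, the crucial point is that $\hat{A}_{S^1}(M) = 0$ for dimension reasons: by its construction in Section \ref{sec:obstruction} this invariant is only potentially nonzero in dimensions $\equiv 1 \pmod 4$, whereas $\dim M = 4k$. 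Theorem \ref{sec:introduction-2} then produces an invariant psc metric on some equivariant connected sum, and the conclusion follows exactly as in the odd type case.

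The main obstacle is the reduction to the hypothesis $\pi_1(M_{\text{max}}) = 0$: in general the $S^1$-action need not lift to the universal cover, so one must work with a suitable intermediate finite cover and verify that the lifted action remains effective and of the same type, and that the total space remains spin. The other reductions and the final appeal to the existence theorems are routine given the dimensional observations above.
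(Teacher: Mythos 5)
Your high-level strategy---pass to dimension $\equiv 0 \pmod 4$, invoke the existence results for invariant psc metrics, observe that $\hat{A}_{S^1}$ vanishes for dimension reasons, and finish with the Lichnerowicz argument---is the same as the paper's, but there is a genuine gap in your reduction to $\pi_1(M_{\text{max}}) = 0$, and the paper structures the argument differently precisely to avoid this issue.

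The problem is that passing to a finite cover cannot, in general, arrange $\pi_1(M_{\text{max}}) = 0$. First, $\pi_1(M_{\text{max}})$ need not be finite: even if $M$ itself is simply connected, removing tubular neighborhoods of codimension-two singular strata (which occur whenever some $M^{\mathbb{Z}_k}$, $k>2$, has codimension two, and this is compatible with $M$ being spin and the action being of even type) can introduce infinite cyclic factors into $\pi_1(M_{\text{max}})$. Second, even when a lift of the $S^1$-action to a cover $\tilde M \to M$ exists, the induced cover $\tilde M_{\text{max}} \to M_{\text{max}}$ corresponds to the kernel of $\pi_1(M_{\text{max}}) \to \pi_1(M)$ (intersected with the chosen subgroup), and there is no reason this kernel should be trivial; in particular the linking-circle classes of codimension-two strata need not die in $\pi_1(M)$. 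So you cannot produce a cover whose maximal stratum is simply connected, and Theorems~\ref{sec:introduction-1} and~\ref{sec:introduction-2} do not apply to the original $M$ without further work.

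The paper's proof avoids this entirely by working at the level of equivariant bordism groups rather than with the manifold itself. It invokes Theorem~\ref{sec:spin-case} and Lemma~\ref{sec:s1-manifolds-not} to replace $2^{l}M$, as a bordism class, by a disjoint union $M_1 \amalg M_2$ in which $M_1$ is \emph{arranged} (by equivariant surgery on the bordism) to be simply connected and semi-free, and $M_2$ admits an invariant psc metric. The hypothesis $\pi_1(M_{\text{max}}) = 0$ is then only needed for $M_1$, where it holds by construction; the dimension argument kills $\hat A_{S^1}(M_1/S^1)$, and $\hat A$ being a bordism invariant gives $2^{l+l'}\hat{A}(M)=0$. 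If you want to salvage your version, the fix is to replace the covering-space reduction with an equivariant surgery/bordism argument of this kind rather than trying to alter $M$ directly.
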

\begin{proof}
  We may assume that \(\dim M = 4k\) and that the \(S^1\)-action is effective.
  Then by Theorem~\ref{sec:Spin-case} and Proposition~\ref{sec:s1-manifolds-not}, \(2^lM\) is equivariantly Spin bordant to a union \(M_1\amalg M_2\), where \(M_1\) is simply connected and semi-free and \(M_2\) admits an invariant metric of positive scalar curvature.
By Theorems 4.7 and 4.11 of \cite{wiemeler15:_circl}, the obstruction \(\hat{A}_{S^1}(M_1/S^1)\) that \(2^{l'}M_1\) admits an invariant metric of positive scalar curvature vanishes by dimension reasons.
Hence it follows that \(2^{l+l'}\hat{A}(M)=\hat{A}(2^{l'}M_1)+\hat{A}(2^{l'}M_2)=0\). This implies \(\hat{A}(M)=0\).
\end{proof}

\section{Rigidity of elliptic genera}
\label{sec:rigidity}

In this section we give a proof of the rigidity of elliptic genera.
At first we recall the definition of an equivariant genus.
We follow \cite{MR970284} for this definition.

A \(\Lambda\)-genus is a ring homomorphism \(\varphi:\Omega_*^{SO}\rightarrow \Lambda\) where \(\Lambda\) is a \(\C\)-algebra.
For such a homomorphism one denotes by
\begin{equation*}
  g(u)=\sum_{i\geq 0} \frac{\varphi[\C P^{2i}]}{2i+1} u^{2i+1} \in \Lambda[[u]]
\end{equation*}
the logarithm of \(\varphi\) and by \(\Phi\in H^{**}(BSO;\Lambda)\) the total Hirzebruch class associated to \(\varphi\). \(\Phi\) is uniquely determined by the property that for the canonical line bundle \(\gamma\) over \(BS^1\),
\(\Phi(L)\) is given by \(\frac{u}{g^{-1}(u)}\).

Then for every oriented manifold \(M\) one has
\begin{equation*}
  \varphi[M]=\langle \Phi(TM),[M]\rangle.
\end{equation*}

For a compact Lie group \(G\), the \(G\)-equivariant genus \(\varphi_{G}\)  associated to \(\varphi\) is defined as
\begin{equation*}
  \varphi_G[M]=p_*\Phi(TM_G)\in H^{**}(BG;\Lambda).
\end{equation*}
Here \(M\) is a \(G\)-manifold and \(TM_G\) is the Borel construction of the tangent bundle of \(M\).
It is a vector bundle over the Borel construction \(M_G\) of \(M\).
Moreover, \(p_*\) denotes the integration over the fiber in the fibration \(M\rightarrow M_G\rightarrow BG\).

It follows from this definition that if \(H\) is a closed subgroup of \(G\), then we have
\begin{equation*}
  \varphi_H[M]=f^*\varphi_G[M],
\end{equation*}
where \(f:BH\rightarrow BG\) is the map induced by the inclusion \(H\hookrightarrow G\).

\begin{lemma}
  Let \(G=S^1\) and \(M\) be an oriented \(G\)-manifold.
  Then the equivariant genus \(\varphi_G[M]\) depends only on the \(G\)-equivariant bordism type of \(M\). 
\end{lemma}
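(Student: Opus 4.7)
The plan is to show that $\varphi_G[M] = 0$ whenever $M$ bounds equivariantly. Combined with the obvious additivity of $\varphi_G$ under disjoint union, this yields the lemma. The strategy is a fiberwise Stokes argument on the Borel construction, directly parallel to the classical proof that $\varphi$ itself is a bordism invariant.

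Suppose $M = \partial W$ for some compact $G$-manifold with boundary $W$, and form the Borel construction $W_G = EG \times_G W$. This gives a fibration $\pi : W_G \to BG$ with typical fiber the compact manifold-with-boundary $W$, whose fiberwise boundary is $M_G$. The vertical tangent bundle $TW_G = EG \times_G TW$ restricts along $M_G$ to $EG \times_G (TM \oplus \nu)$, where $\nu$ is the trivial outward normal line bundle; hence $TW_G|_{M_G} \cong TM_G \oplus \underline{\R}$ stably. Since $\Phi$ is stably multiplicative and $\Phi(\underline{\R}) = 1$ (from $g(u) = u + O(u^3)$ we get $g'(0) = 1$, so $u/g^{-1}(u)|_{u=0} = 1$), we deduce the key identity $\Phi(TW_G)|_{M_G} = \Phi(TM_G)$ in $H^{**}(M_G; \Lambda)$.

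Next I would choose a closed de Rham representative $\omega$ of $\Phi(TW_G)$ on $W_G$. The fiberwise integration $p_*^W\omega$ is a form on $BG$, and because the fibers have boundary, the fiber-integration Stokes formula yields $d(p_*^W\omega) = \pm\, p_*^{\partial W}(\omega|_{M_G}) = \pm\, p_*^M \Phi(TM_G) = \pm\,\varphi_G[M]$. Thus $\varphi_G[M]$ is exact, hence vanishes in $H^{**}(BG; \Lambda)$.

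The main obstacle is that $BG$ is infinite-dimensional, so the differential-form manipulation must be interpreted via finite-dimensional smooth approximations. For $G = S^1$ this is concrete: take $B_k = \C P^k$ and $E_k = S^{2k+1}$, so that $E_k \times_G W \to B_k$ is an honest smooth bundle with compact-manifold-with-boundary fiber, to which fiberwise Stokes applies verbatim and produces an explicit primitive of $\varphi_G[M]$ in $H^*(B_k;\Lambda)$. The identification $H^{**}(BG;\Lambda) = \varprojlim_k H^*(B_k;\Lambda)$ then promotes vanishing at each finite level to the desired vanishing, completing the proof.
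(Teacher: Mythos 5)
Your proposal is correct and takes essentially the same route as the paper: both reduce to the finite-dimensional approximations \(\C P^{k}\) of \(BS^{1}\) and then use the boundary relation \(M_{G}|_{\C P^{k}} = \partial(W_{G}|_{\C P^{k}})\) together with the stability of \(\Phi\) to conclude that the resulting characteristic number vanishes. The paper phrases this dually, pairing \(p_{*}\Phi(TM_{G})\) against the fundamental classes \([\C P^{k}]\in H_{*}(BS^{1})\) and invoking the bordism invariance of characteristic numbers, whereas you package it as a fiberwise Stokes computation producing a primitive; the underlying argument is the same.
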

\begin{proof}
  It is sufficient to show that if \(M=\partial W\) is an equivariant boundary, then \(\varphi_G[M]=0\).
  Since the homology of \(BS^1=\C P^\infty\) is concentrated in even degrees and generated by the fundamental classes of the natural inclusions \(\iota_n:\C P^n\hookrightarrow \C P^\infty\), \(n\geq 0\),
 it is sufficient to show that
\begin{equation*}
  0=\langle p_*\Phi(TM_G),[\C P^n]\rangle,
\end{equation*}
for all \(n\).
Now we have
\begin{align*}
  \langle p_*\Phi(TM_G),[\C P^n]\rangle &=\langle p_*\Phi(TM_G|_{\C P^n}),[\C P^n]\rangle\\
&=\langle \Phi(TM_G|_{\C P^n}),[M_G|_{\C P^n}]\rangle=0. 
\end{align*}
Here the first two equations follow from the properties of \(p_*\).
Moreover, the last equality follows because \(M_G|_{\C P^n}\) bounds \(W_G|_{\C P^n}\).
This proves the lemma.
\end{proof}

A \(\Lambda\)-genus \(\varphi\) is called \emph{elliptic} if there are \(\delta,\epsilon\in\Lambda\) such that its logarithm is given by
\begin{equation*}
  g(u)=\int_0^u\frac{dz}{\sqrt{1-2\delta z^2+\epsilon z^4}}.
\end{equation*}

We call an equivariant genus \(\varphi_{S^1}\) of an \(S^1\)-manifold \(M\) rigid, if \(\varphi_{S^1}[M]\in H^{**}(BS^1;\Lambda)=\Lambda[[u]]\) is constant in \(u\).
The following has been proved by Ochanine \cite{MR970284}.

\begin{theorem}
  The elliptic genus of a semi-free Spin-\(S^1\)-manifold is rigid.
\end{theorem}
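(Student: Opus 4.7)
The plan is to apply the Atiyah--Bott--Berline--Vergne equivariant localization formula to $\varphi_{S^1}[M]$, then exploit the elliptic structure of $g$ to show the resulting sum is independent of $u$. Since the action is semi-free the fixed point set $M^{S^1}$ decomposes as a disjoint union of closed submanifolds $F_j$, and the equivariant normal bundle $\nu_j$ of each $F_j$ has only the weights $\pm 1$ appearing in its isotropy representation. It therefore splits canonically as $\nu_j^+ \oplus \nu_j^-$ with both summands acquiring complex structures, and localization yields
\begin{equation*}
  \varphi_{S^1}[M](u) \;=\; \sum_j \int_{F_j}\Phi(TF_j)\cdot\prod_i\frac{1}{g^{-1}(x_i^+ + u)}\cdot\prod_l\frac{1}{g^{-1}(x_l^- - u)},
\end{equation*}
where $x_i^\pm$ are formal Chern roots of $\nu_j^\pm$; the Spin hypothesis ensures that the implicit square roots in $g^{-1}$ are globally well-defined and produce honest characteristic classes rather than classes depending on auxiliary choices.

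The next step uses the defining property of an elliptic genus: its logarithm $g$ is an elliptic integral, so $g^{-1}$ is a Jacobi elliptic function and $1/g^{-1}(u)$ extends to a meromorphic doubly periodic function on the elliptic curve $\mathcal{E}\colon y^2 = 1 - 2\delta z^2 + \epsilon z^4$. Consequently each fixed-point contribution, viewed as a function of $u$, extends to a meromorphic function on $\mathcal{E}$, and so does their sum. If I can show that this sum is in fact holomorphic on $\mathcal{E}$, compactness of $\mathcal{E}$ together with Liouville's theorem will force $\varphi_{S^1}[M](u)$ to be a constant, which is the desired rigidity.

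Holomorphicity reduces to a residue computation at the candidate poles, which arise from vanishings of the various $g^{-1}(x_i^\pm \pm u_0)$. The residue at such a pole $u_0$ can be repackaged, via a secondary application of localization (now for the finite subgroup $H \subset S^1$ corresponding to $u_0$), as a characteristic number of the fixed set $M^H$. The semi-free hypothesis is exactly what guarantees $M^H = M^{S^1}$ for every proper nontrivial closed subgroup $H \subset S^1$, so the residues at every candidate pole other than $u_0 = 0$ are essentially ``double-counted'' contributions of $M^{S^1}$; pairing the $\nu_j^+$ and $\nu_j^-$ factors shows they cancel.

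The main obstacle I expect is the careful bookkeeping of this cancellation: aligning the sign conventions coming from the orientations of $\nu_j^+$ versus $\nu_j^-$, distinguishing the two period directions of $\mathcal{E}$, and using the Spin structure to resolve the $\sqrt{\cdot}$-ambiguity in $g^{-1}$ coherently across different fixed point components $F_j$. Once the residue cancellation is established and the pole at $u=0$ is checked to contribute only the non-equivariant genus $\varphi[M]$, Liouville on $\mathcal{E}$ finishes the proof.
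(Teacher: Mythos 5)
The paper does not prove this statement; it is quoted as a result of Ochanine (\cite{MR970284}), and the only related argument in the text is the subsequent lemma, which extends Ochanine's proof to effective $T^n$-actions whose fixed point components all have codimension $2n$. So your proposal should be measured against Ochanine's argument.

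Your overall strategy --- localize to the fixed set, use the elliptic structure to extend the answer meromorphically to a compact Riemann surface, and finish with Liouville --- is correct and is indeed Ochanine's. The genuine gap is in the holomorphicity step. You propose to compute the residue at a candidate pole $u_0$ by ``a secondary application of localization for the finite subgroup $H\subset S^1$ corresponding to $u_0$'' and then to cancel it by ``pairing the $\nu_j^+$ and $\nu_j^-$ factors.'' Neither half of this works as stated. The candidate poles lie on the full period lattice $W$ of the elliptic curve --- including translates in the imaginary period direction --- and most of these do not correspond to any finite subgroup of $S^1$, so there is no $H$ to localize to. And the $\nu^+$ and $\nu^-$ factors enter the localization formula with opposite shifts $g^{-1}(x_i^++u)$ and $g^{-1}(x_l^--u)$, so they do not pair off term by term at a generic lattice point.

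The mechanism that actually works, and which the paper reproduces in its proof of the following lemma for the $T^n$-setting, is quasi-periodicity: $x(u+w)=(-1)^{r(w)}x(u)$ for $w\in W$, so each fixed-point contribution transforms under $u\mapsto u+w$ by the sign $(-1)^{r(w)\cdot\mathrm{codim}_{\C}F_j}$. Here the Spin hypothesis does not serve to resolve square-root ambiguities in $g^{-1}$ (there are none; $g^{-1}$ is an honest power series); its role is to force $\mathrm{codim}\,F_j$ to have a fixed residue mod $4$ across all fixed components, so that the sign above is independent of $j$. Then $\varphi_{S^1}[M](u+w)=\pm\varphi_{S^1}[M](u)$ globally, and regularity at $u=0$ --- which is automatic, since $\varphi_{S^1}[M]$ is by construction a power series in $u$ --- propagates to every lattice point. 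This yields an entire quasi-periodic function, hence a constant. Semi-freeness enters precisely to confine the candidate poles to $W$: a weight $m$ with $|m|>1$ would also produce poles on $\tfrac{1}{m}W$, which quasi-periodicity alone cannot reach, and that harder case requires the methods of Taubes, Bott--Taubes, Liu, or the bordism-theoretic argument leading to Theorem~\ref{sec:introduction-4} of this paper.
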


In view of the above lemma and Theorem~\ref{sec:introduction-3} it suffices to show the following lemma to prove the rigidity of elliptic genera (Theorem~\ref{sec:introduction-4}).
In an effective \(T\)-manifold the codimension of the fixed point set is at least \(2\dim T\).
The next lemma states that the \(T\)-equivariant elliptic genus of an effective \(T\)-manifold is constant if the codimension of all components of the fixed point set is minimal.

\begin{lemma}
  Let \(M\) be an effective Spin-\(T^n\)-manifold, such that all fixed point components have codimension \(2n\). Then the \(T^n\)-equivariant elliptic genera of \(M\) are rigid.
\end{lemma}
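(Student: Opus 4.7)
The plan is to adapt Ochanine's argument for semi-free \(S^1\)-actions to our multi-dimensional setting via the Atiyah--Bott--Berline--Vergne localization formula. First I would express the equivariant genus as a sum over fixed components,
\[ \varphi_{T^n}[M] = \sum_F \int_F \frac{\Phi_{T^n}(TM|_F)}{e_{T^n}(N_F)}, \]
where \(F\) runs over the connected components of \(M^{T^n}\), and the right hand side is viewed as a meromorphic function of a formal parameter \(u \in H^2(BT^n;\C) \cong \C^n\).

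Second, I would analyze the normal bundle at each fixed component. By the minimal-codimension hypothesis, each \(N_F\) has complex rank \(n\) and splits \(T^n\)-equivariantly as \(N_F = \bigoplus_{i=1}^n L_i^F\), with the \(T^n\)-weights \(\alpha_1^F,\dots,\alpha_n^F \in \Hom(T^n,S^1) \cong \mathbb{Z}^n\) forming a \(\mathbb{Z}\)-basis of the character lattice; otherwise some non-trivial closed subgroup would act trivially in a neighborhood of \(F\), contradicting effectiveness. This basis property is the multi-dimensional analogue of Ochanine's hypothesis that all weights at fixed points of a semi-free \(S^1\)-action are \(\pm 1\).

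Third, following the theta-function description of the elliptic genus, I would view \(\varphi_{T^n}[M]\) as a meromorphic function on the \(n\)-fold product of the elliptic curve \(E\) associated to \(\varphi\). The contribution of each \(F\) has potential poles along the divisors \(\alpha_i^F(u) \equiv 0\) modulo the period lattice. The core of Ochanine's argument is a residue cancellation showing that, after summing over all \(F\), the polar part vanishes. I would extend this computation to the \(T^n\)-setting: for each potential polar hyperplane, the basis property allows a \(\mathrm{GL}_n(\mathbb{Z})\) change of coordinates putting the hyperplane into standard form, and the residue along it becomes a sum over those fixed components whose normal weights lie in the hyperplane. The differential equation \((g'(u))^2 = 1 - 2\delta g(u)^2 + \epsilon g(u)^4\) satisfied by the elliptic logarithm then forces these residues to cancel in pairs, just as in Ochanine's case. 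Having no poles on a compact abelian variety, the resulting function is holomorphic hence constant, which is precisely rigidity.

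The main obstacle is the residue cancellation step. In Ochanine's \(S^1\)-case the fixed components pair up naturally along each potential pole via conjugation of the complex structure on the weight-\((-1)\) summand of the normal bundle; in the \(T^n\)-setting this pairing must be replaced by a more elaborate combinatorial argument keyed to the basis structure of the weights, and verifying that the correct multi-variable theta-function identities force the cancellation is where the bulk of the technical work lies.
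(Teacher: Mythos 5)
Your proposal correctly identifies the key structural input: at each fixed component \(F\), the minimal-codimension hypothesis together with effectiveness forces the normal weights \(\lambda_{1,F},\dots,\lambda_{n,F}\) to be a \(\mathbb{Z}\)-basis of \(\Hom(T^n,S^1)\cong\mathbb{Z}^n\). You also correctly frame the goal as showing that the candidate polar divisors of the meromorphic function \(\varphi_{T^n}[M]\) are in fact empty. Where you diverge from the paper — and where your own write-up flags a gap — is the third step, the \emph{residue cancellation}. You propose to generalize Bott--Taubes-type pole cancellation by pairing fixed components via a combinatorial scheme keyed to the basis structure and by invoking multi-variable theta identities, and you note that verifying those identities is ``where the bulk of the technical work lies.'' That step is genuinely unresolved in your proposal, and the paper does not carry it out.

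Instead, the paper sidesteps residue cancellation entirely with a \emph{restriction-to-a-subtorus} argument. The candidate poles of \(\varphi_{T^n}[M]\) lie along translates \(\ker\lambda_{i,F,\C}+z\) with \(z\) in the period lattice. The spin hypothesis (the mod-two reduction of \(\sum_i\lambda_{i,F}\) is independent of \(F\)) together with the transformation law \(x(u+w)=(-1)^{r(w)}x(u)\) shows that if \(\varphi_{T^n}\) is singular on such a translate, it is already singular on the untranslated hyperplane \(\ker\lambda_{i,F,\C}\). But by naturality of equivariant genera under \(f^*:H^{**}(BT^n)\to H^{**}(BT^{n-1})\), the restriction of \(\varphi_{T^n}[M]\) to \(\ker\lambda_{i,F,\C}\) \emph{is} \(\varphi_{T^{n-1}}[M]\) for the codimension-one subtorus \(T^{n-1}\subset T^n\) cut out by \(\lambda_{i,F}\), and this is a genuine meromorphic function on \(\C^{n-1}\), not identically singular. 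Since the polar set of \(\varphi_{T^n}\) is an analytic set of pure codimension one, a nonempty open intersection with the irreducible hyperplane \(\ker\lambda_{i,F,\C}\) would force the entire hyperplane into the polar set, contradicting the previous sentence. Hence \(\varphi_{T^n}[M]\) has no poles; being periodic up to sign and bounded, it is constant. This single observation replaces the entire cancellation mechanism you were trying to construct, and it is the essential idea your proposal is missing.
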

\begin{proof}
  It suffices to consider the equivariant elliptic genus \(\varphi_{T^n}(M)\) defined in Section 2.1 of Ochanine's paper.

This is defined as follows:
For a lattice \(W\subset \C\) and a non-trivial homomorphism \(r:W\rightarrow \mathbb{Z}_2\) there exists a unique meromorphic function \(x\) on \(\C\) such that
\begin{enumerate}
\item \(x\) is odd,
\item the poles of \(x\) are exactly the points in \(W\); they are all simple and the residues of \(x\) in \(w\in W\) is given by \((-1)^{r(w)}\),
\item\label{item:3} for all \(w\in W\) we have
  \begin{equation*}
    x(u+w)=(-1)^{r(w)}x(u).
  \end{equation*}
\end{enumerate}
From this one defines a genus \(\varphi\) such that \(g(u)^{-1}\) is the Taylor expansion of \(1/x\) in the point \(u=0\).

With this definition, \(\varphi_{T^n}[M]\) can be identified with a meromorphic function on \(\C^n\). 

  Let \(F\subset M\) be a fixed point component and \(\lambda_{1,F},\dots,\lambda_{n,F}:\mathbb{Z}^n\rightarrow \mathbb{Z}\) the weights of the \(T^n\)-action on the normal bundle to \(F\).
  Since the \(T^n\)-action is effective and \(\codim F=2n\), it follows that
  \begin{equation*}
    (\lambda_{1,F},\dots,\lambda_{n,F}):\mathbb{Z}^n\rightarrow \mathbb{Z}^n
  \end{equation*}
is an isomorphism.
In particular each \(\lambda_{i,F}\) is surjective.

As in the proof of Proposition 7 in Ochanine's paper \cite{MR970284} one sees that the genus \(\varphi_{T^n}(M)\) is a polynomial in \(x\circ\lambda_{i,F,\C}\) and \(y\circ\lambda_{i,F,\C}\), \(i=1,\dots,n\). Here \(F\subset M^{T^n}\) is a component of \(M^{T^n}\), and \(\lambda_{i,F,\C}\) is the linear extension of \(\lambda_{i,F}\) to \(\C^n\). Moreover, \(y\) is the derivative of \(x\).

In particular, the poles of \(\varphi_{T^n}(M)\) lie in the union of the following hyperplanes:
\begin{equation*}
  \ker \lambda_{i,F,\C} + z
\end{equation*}
with \(z\in W^n\). 
Since the singular set of a meromorphic function on \(\C^n\) is empty or an analytic set of codimension one,
we may assume that the singular set of \(\varphi_{T^n}\) has a non-empty open intersection with one of the hyperplanes above.

Since \(M\) is Spin, the mod two reduction of \(\sum_{i=1}^n\lambda_{i,F}\) does not depend on the component \(F\). It follows from the defining equation (\ref{item:3})  for \(x\), that a non-empty open set in the hyperplane
\(\ker \lambda_{i,F,\C}\) is singular for \(\varphi_{T^n}\).

But the restriction of \(\varphi_{T^n}\) to this hyperplane equals \(\varphi_{T^{n-1}}\) where \(T^{n-1}\) is the codimension one subtorus of \(T^n\) which is defined by \(\lambda_{i,F}\).
Since \(\varphi_{T^{n-1}}\) is a meromorphic function on \(\C^{n-1}\), it follows that the intersection of the singular set of \(\varphi_{T^n}\) with \(\ker \lambda_{i,F}\) can only be open if it is empty.
Therefore \(\varphi_{T^n}\) does not have singular points. This implies that it is constant. 
\end{proof}

Finally we want to compare our proof of the rigidity of elliptic genera with the proof of Bott--Taubes.
The difference between our proof and the proof of Bott--Taubes is that they prove that the equivariant universal elliptic genus of an Spin-\(S^1\)-manifold \(M\)  equals some twisted elliptic genus of some auxiliary manifolds \(M'\) by using the Lefschetz fixed point formula.
These auxiliary manifolds are fixed point sets \(M^{\mathbb{Z}_k}\) of finite cyclic subgroups of \(S^1\).
 Using this fact they can show that the equivariant universal elliptic genus of \(M\) does not have poles. Therefore it may be identified with a  bounded holomorphic function. Hence, it is constant.

We use the fact that we only have to prove the theorem for our generators of the \(S^1\)-equivariant Spin bordism ring.
For semi-free \(S^1\)-manifolds this has been done by Ochanine \cite{MR970284}, by using localization in equivariant cohomology and some elementary complex analysis.
The proof in the semi-free case is simpler than in the non-semi-free case because one sees directly from the fixed point formula that the elliptic genus does not have poles.
Therefore one does not need the auxiliary manifolds and the twisted elliptic genera in this case.

So, we only have to show that an \(S^1\)-equivariant elliptic genus of a generalized Bott manifold, which is Spin, is constant.
We do this by showing that the \(T\)-equivariant elliptic genera of a generalized Bott manifold \(M^{2n}\)  are constant.
The main new technical observation in the proof of this fact is that Ochanine's proof carries over to the situation where the codimension of the \(T\)-fixed point set is minimal.

\bibliography{circle_psc}{}
\bibliographystyle{amsplain}
\end{document}